\def \figures{figures/}
\newtheorem{theorem}{Theorem}[section]
\newtheorem{lemma}[theorem]{Lemma}
\newtheorem{corollary}[theorem]{Corollary}
\newtheorem{conjecture}[theorem]{Conjecture}
\newtheorem{proposition}[theorem]{Proposition}
\theoremstyle{definition} %
\newtheorem{definition}[theorem]{Definition}
\newtheorem{remark}[theorem]{Remark}
\newtheorem{example}[theorem]{Example}
\newtheorem{examples}[theorem]{Examples}
\newtheorem{question}[theorem]{Question}
\newtheorem{fact}[theorem]{Fact}
\newcommand{\todo}[2][]{}
\newcommand{\texthl}[1]{}
\newcommand{\nicolas}[1]{}
\newcommand{\justine}[1]{}
\newcommand{\tentative}[1]{}
\newcommand{\profile}[1]{\varphi_{#1}}
\newcommand{\profileseries}[1]{\mathcal{H}_{#1}}
\newcommand{\hilbert}[2][z]{\mathcal{H}(#2, #1)}
\newcommand{\QQ}{\mathbb{Q}}
\newcommand{\NN}{\mathbb{N}}
\newcommand{\ZZ}{\mathbb{Z}}
\newcommand{\KK}{\QQ}
\newcommand{\kerG}{T}
\newcommand{\kernel}[1]{\ker{#1}}
\newcommand{\restrict}[2]{{#1}_{|#2}}
\newcommand{\reynolds}[2][]{R_{#1}^{#2}}
\newcommand{\age}[1]{\mathcal{A}(#1)}
\newcommand{\orbitalgebra}[2][\KK]{#1\age{#2}}
\newcommand{\wreath}{\wr}
\newcommand{\sg}{\mathfrak{S}}
\newcommand{\sym}{\operatorname{Sym}}
\newcommand{\Even}{\operatorname{Even}}
\newcommand{\Gonblocks}{\overline G}
\newcommand{\Aut}{\operatorname{Aut}}
\newcommand{\Rev}{\ensuremath{\operatorname{Rev}}}
\newcommand{\suchthat}{\mid}
\newcommand{\hookdoubleheadrightarrow}{\hookrightarrow\mathrel{\mspace{-15mu}}\rightarrow}
\newcommand{\fix}{\operatorname{Fix}}
\newcommand{\stabilizer}{\operatorname{Stab}}
\newcommand{\fisubgroup}{K}
\newcommand{\Id}{\operatorname{Id}} %
\newcommand{\id}{\operatorname{id}} %
\newcommand{\canosyst}[1]{\mathcal{B}(#1)}
\newcommand{\finitesubsets}[1]{\mathscr{E}_{#1}}
\newcommand{\subsetalgebra}[1]{\QQ\finitesubsets{#1}}
\newcommand{\blocksystem}[1][B]{\mathcal{#1}}
\newcommand{\finiteblocksystem}[1][B]{\blocksystem[#1]^{<\infty}}
\newcommand{\blocklattice}[1]{\mathcal{L}(#1)}
\newcommand{\finiteblocklattice}[1]{\mathcal{L}^{<\infty}(#1)}
\newcommand{\infiniteblocklattice}[1]{\mathcal{L}^{\infty}(#1)}
\newcommand{\bound}{L}
\newcommand{\groupfiniteblocks}[3][]{[#2, #3^{\infty}\ifthenelse{\equal{#1}{}}{}{,#1}]}
\newcommand{\nested}[1]{\mathcal{B}_{\mathcal{B}}(#1)}
\newcommand{\nest}{\mathcal{B}_{\mathcal{B}}}
\newcommand{\blockofblocks}{superblock\xspace}
\newcommand{\blocksofblocks}{superblocks\xspace}
\newcommand{\stab}[2]{\text{Stab}_{#1}(#2)}
\newcommand{\BBindex}[1][j]{^{(#1)}}
\newcommand{\BB}[1][j]{BB\BBindex[#1]}
\renewcommand{\P}{\mathfrak{M}} %
\newcommand{\PM}{M} %
\newcommand{\Prim}[1][j]{\PM\BBindex[#1]}
\newcommand{\Data}{\operatorname{Data}}
\newcommand{\data}{\Delta}
\newcommand{\DataInv}{\operatorname{Group}}
\newcommand{\domain}{E}
\newcommand{\defi}{\textbf}
\newcommand{\finiteG}{G_{<\infty}}
\newcommand{\finiteK}{K_{<\infty}}
\tikzset{
  LabelStyle/.style = { rectangle, rounded corners, draw,
                        minimum width = 2em, fill = yellow!50,
                        text = red, font = \bfseries },
  VertexStyle/.append style = { inner sep=5pt,
                                font = \Large\bfseries},
  EdgeStyle/.append style = {->, bend left} }
\newcommand{\trivialfull}[3][.2]{
\raisebox{0cm} {\begin{tikzpicture}
\draw [draw=blue!80, thick](3*4*#1+1.5*#1,0) ellipse (16*#1cm and 2*#2*#1cm);
  \foreach \x in {1,2,...,7}{
  	\foreach \y in {1,2,...,#2}{
      \draw[fill=black!50] (3*\x*#1 cm, 2*#1+\y*#1 cm) circle  (.4*#1cm);
    };
    \foreach \yy in {1,2,...,#3}{
      \draw[fill=black!50] (3*\x*#1 cm, -2*#1-\yy*#1 cm) circle  (.4*#1cm);
    };
  }
\draw[dotted, thick] (3*7.8*#1cm, .8*#2*#1 cm) -- (3*8.3*#1cm, .8*#2*#1 cm);
\draw[dotted, thick] (3*7.8*#1cm, -.8*#3*#1 cm) -- (3*8.3*#1cm, -.8*#3*#1 cm);
\end{tikzpicture}}}
\newcommand{\trivialfinest}[3][.2]{
\raisebox{0cm} {\begin{tikzpicture}
\draw [draw=white](3*4*#1+1.5*#1,0) ellipse (16*#1cm and 2*#2*#1cm);
  \foreach \x in {1,2,...,7}{
  	\foreach \y in {1,2,...,#2}{
      \draw[fill=black!50] (3*\x*#1 cm, 2*#1+\y*#1 cm) circle  (.4*#1cm);
    };
    \foreach \yy in {1,2,...,#3}{
      \draw[fill=black!50] (3*\x*#1 cm, -2*#1-\yy*#1 cm) circle  (.4*#1cm);
    };
  }
\draw[dotted, thick] (3*7.8*#1cm, .8*#2*#1 cm) -- (3*8.3*#1cm, .8*#2*#1 cm);
\draw[dotted, thick] (3*7.8*#1cm, -.8*#3*#1 cm) -- (3*8.3*#1cm, -.8*#3*#1 cm);
\end{tikzpicture}}}
\newcommand{\canonical}[3][.2]{
\raisebox{0cm} {\begin{tikzpicture}
\draw [draw=white](3*4*#1+1.5*#1,0) ellipse (16*#1cm and 2*#2*#1cm);
  \foreach \x in {1,2,...,7}{
    \draw [draw=blue!80, thick](3*\x*#1 cm,0cm) ellipse (#1 cm and 2*#2*#1 cm);
  	\foreach \y in {1,2,...,#2}{
      \draw[fill=black!50] (3*\x*#1 cm, 2*#1+\y*#1 cm) circle  (.4*#1cm);
    };
    \foreach \yy in {1,2,...,#3}{
      \draw[fill=black!50] (3*\x*#1 cm, -2*#1-\yy*#1 cm) circle  (.4*#1cm);
    };
  }
\draw[dotted, thick] (3*7.8*#1cm, .8*#2*#1 cm) -- (3*8.3*#1cm, .8*#2*#1 cm);
\draw[dotted, thick] (3*7.8*#1cm, -.8*#3*#1 cm) -- (3*8.3*#1cm, -.8*#3*#1 cm);
\end{tikzpicture}}}
\newcommand{\twoblocks}[3][.2]{
\raisebox{0cm} {\begin{tikzpicture}
\draw [draw=white](3*4*#1+1.5*#1,0) ellipse (16*#1cm and 2*#2*#1cm);
\draw [draw=blue!80, thick](2.8*4*#1+1.7*#1, .8*#2*#1 cm) ellipse (14*#1cm and .8*#2*#1cm);
\draw [draw=blue!80, thick](2.8*4*#1+1.7*#1,-.8*#3*#1 cm) ellipse (14*#1cm and .8*#3*#1cm);
  \foreach \x in {1,2,...,7}{
  	\foreach \y in {1,2,...,#2}{
      \draw[fill=black!50] (3*\x*#1 cm, 2*#1+\y*#1 cm) circle  (.4*#1cm);
    };
    \foreach \yy in {1,2,...,#3}{
      \draw[fill=black!50] (3*\x*#1 cm, -2*#1-\yy*#1 cm) circle  (.4*#1cm);
    };
  }
\draw[dotted, thick] (3*7.8*#1cm, .8*#2*#1 cm) -- (3*8.3*#1cm, .8*#2*#1 cm);
\draw[dotted, thick] (3*7.8*#1cm, -.8*#3*#1 cm) -- (3*8.3*#1cm, -.8*#3*#1 cm);
\end{tikzpicture}}}
\newcommand{\twoorbitsofblocks}[3][.2]{
\raisebox{0cm} {\begin{tikzpicture}
\draw [draw=white](3*4*#1+1.5*#1,0) ellipse (16*#1cm and 2*#2*#1cm);
  \foreach \x in {1,2,...,7}{
    \draw [draw=blue!80, thick](3*\x*#1 cm, .8*#2*#1 cm) ellipse (#1 cm and .7*#2*#1 cm);
    \draw [draw=blue!80, thick](3*\x*#1 cm, -.8*#3*#1 cm) ellipse (#1 cm and .7*#3*#1 cm);
  	\foreach \y in {1,2,...,#2}{
      \draw[fill=black!50] (3*\x*#1 cm, 2*#1+\y*#1 cm) circle  (.4*#1cm);
    };
    \foreach \yy in {1,2,...,#3}{
      \draw[fill=black!50] (3*\x*#1 cm, -2*#1-\yy*#1 cm) circle  (.4*#1cm);
    };
  }
\draw[dotted, thick] (3*7.8*#1cm, .8*#2*#1 cm) -- (3*8.3*#1cm, .8*#2*#1 cm);
\draw[dotted, thick] (3*7.8*#1cm, -.8*#3*#1 cm) -- (3*8.3*#1cm, -.8*#3*#1 cm);
\end{tikzpicture}}}
\newcommand{\superblock}[6]{ %
	\draw[draw=#6, thick] (#1-#5,#4) -- (#1-#5,#3);
	\draw[draw=#6, thick] (#1-#5,#3) arc (180:270:#5);
	\draw[draw=#6, thick] (#1,#3-#5) -- (#2,#3-#5);
	\draw[draw=#6, thick] (#2,#3-#5) arc (270:360:#5);
	\draw[draw=#6, thick] (#2+#5,#3) -- (#2+#5,#4);
	\draw[draw=#6, thick] (#2+#5,#4) arc (0:90:#5);
	\draw[draw=#6, thick] (#2,#4+#5) -- (#1,#4+#5);
	\draw[draw=#6, thick] (#1,#4+#5) arc (90:180:#5);
	}
\title[]{Classification of $P$-oligomorphic groups, conjectures of
  Cameron and Macpherson}
\author{Justine Falque \and Nicolas M. Thiéry}
\address{Univ Paris-Sud, Laboratoire de Recherche en Informatique,
Orsay; %
CNRS, Orsay, F-91405, France}
\keywords{Infinite permutation groups, orbit counting, profiles, generating series, 
  orbit algebras, invariant theory, $\aleph_0$-categorical complete theories,
  homogeneous relational structures}
\begin{document}

\begin{abstract}
  Let $G$ be a group of permutations of a denumerable set
  $E$. The \textbf{profile} of $G$ is the function $\profile{G}$ which
  counts, for each $n$, the (possibly infinite) number $\profile{G}(n)$ of orbits of $G$
  acting on the $n$-subsets of $E$.
  Counting functions arising this way, and their associated generating
  series, form a rich yet apparently strongly constrained class. In
  particular, Cameron conjectured in the late seventies that, whenever
  the profile $\profile{G}(n)$ is bounded by a polynomial -- we say
  that $G$ is $P$-oligomorphic --, it is asymptotically equivalent to
  a polynomial. In 1985, Macpherson further asked whether the \textbf{orbit
    algebra} of $G$ -- a graded commutative algebra invented by
  Cameron and whose Hilbert function is $\profile{G}$ -- is finitely
  generated.

  In this paper we establish a classification of (closed)
  $P$-oligomorphic permutation groups in terms of \emph{finite
    permutation groups with decorated blocks}.

  It follows from the classification that the orbit algebra of any
  $P$-oligomorphic group is isomorphic to (a straightforward quotient
  of) the invariant ring of some finite permutation group. This
  answers positively both Cameron's conjecture and Macpherson's
  question. The orbit algebra is in fact Cohen-Macaulay; therefore the
  generating series of the profile is a rational fraction whose
  numerator has positive coefficients, while the denominator admits a
  combinatorial description.

  In addition, the classification provides a finite data structure for
  encoding closed $P$-oligomorphic groups. This paves the way for
  computing with them and enumerating them as well as for proofs by
  structural induction. Finally, the relative simplicity of the
  classification gives hopes to extend the study to, e.g., the class
  of (closed) permutations groups with sub-exponential profile.

  The proof exploits classical notions from group theory -- notably
  block systems of permutation groups and their lattice properties --,
  commutative algebra, and invariant theory.
\end{abstract}

\maketitle
\ifdraft
\clearpage
\tableofcontents
\clearpage
\fi

\section{Introduction}

\subsection{A conjecture of Cameron and a question of Macpherson}

Counting objects under a group action is a classical endeavor in
algebraic combinatorics. If $G$ is a permutation group acting on a
finite set $E$, Burnside's lemma provides a formula for the number of
orbits, while Pólya theory refines this formula to compute, for
example, the \textbf{profile} of $G$, that is, the function which
counts, for each $n\in\NN$, the number $\profile{G}(n)$ of orbits of
$G$ acting on subsets of size $n$ of $E$.

In the seventies, Cameron initiated the study of the profile when $G$
is instead a permutation group of an infinite set $E$. Of course the
question makes sense mostly if $\profile{G}(n)$ is finite for all $n$;
in that case, the group is called \textbf{oligomorphic}, and the
infinite sequence $\profile{G}=(\profile{G}(n))_n$ an \textbf{orbital
  profile}.

This setting includes, for example, counting integer
partitions (with optional length and height restrictions) or graphs up
to an isomorphism. It also relates naturally to logic: indeed
Ryll-Nardzewski's Theorem~\cite{Ryll-Nardzewski.1959} implies that
orbital profiles (of oligomorphic groups) are exactly the profiles of
$\aleph_0$-categorical theories, that is,
\emph{complete theories admitting a unique countable model}
(see \cite[Section 2.5]{Cameron.1990} for further details).

The study of oligomorphic permutation groups has become a whole
research subject; see~\cite{Cameron.1990,Cameron.OPG.2009} for
surveys. One central topic is the description of general properties of
orbital profiles.
It was soon observed that the potential growth rates exhibited jumps.
For example, the profile either grows at least as fast as the
partition function, or is bounded by a
polynomial~\cite[Theorem~1.2]{Macpherson.1985}. In the latter case --
we say that $G$ is \defi{$P$-oligomorphic} -- it was conjectured to be
asymptotically polynomial:
\begin{conjecture}[Cameron~\cite{Cameron.1990}]
  \label{conjecture.cameron}
  Let $G$ be a $P$-oligomorphic permutation group. Then
  $\profile{G}(n)\sim a n^k$ for some $a>0$ and $k\in \NN$.
\end{conjecture}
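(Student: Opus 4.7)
The plan is to reduce Cameron's conjecture to a concrete structure theorem for closed $P$-oligomorphic permutation groups, and then translate that structure into a description of the Hilbert series of the orbit algebra $\orbitalgebra{G}$, whose Hilbert function is by construction $\profile{G}$. Since passing from $G$ to its closure in $\sym(E)$ preserves every orbit on finite subsets, hence the profile, I assume $G$ closed throughout.

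The core step is a classification of closed $P$-oligomorphic groups. Intuitively, polynomial growth forces the action to have only finitely many degrees of freedom above a rigid skeleton: there are finitely many infinite $G$-orbits on $E$, each is equipped with a canonical system of blocks, the block-stabilizers act on each infinite block as one of the short list of ``highly homogeneous'' subgroups of $\sym(\NN)$ (roughly $\sym$, $\Aut$ of a linear or circular order, or $\Rev$ thereof, the other possibilities being ruled out by the Macpherson jump invoked in the introduction), and what sits above these blocks is a finite permutation group $H$ permuting finitely many ``kinds'' of blocks. I plan to produce this picture by analyzing the lattice of $G$-invariant block systems, showing that the sublattice of systems with finite quotient is itself finite, and extracting from it a canonical coarsest member. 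The hard part will be proving that the residual action on each individual infinite block must lie in the restricted list above, since any richer residual action would push the profile beyond polynomial; this should proceed by structural induction on the block lattice combined with lower-bound estimates on the orbit counts of any would-be offending configuration.

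Once this classification is in hand, the passage to commutative algebra is formal. The orbit algebra of a direct product of permutation groups is the tensor product of the orbit algebras, and for each of the permitted residual ``infinite block'' groups the orbit algebra on a single block is a polynomial ring in one variable (possibly modulo an obvious relation). Wreathing by the finite group $H$ on top amounts to taking $H$-invariants, so $\orbitalgebra{G}$ is isomorphic to $(\KK[x_1,\ldots,x_N]/I)^H$ for an explicit finite group $H$ and $H$-stable ideal $I$. The Hilbert--Noether theorem then yields finite generation (answering Macpherson's question positively), and the Hochster--Eagon theorem, available since we work over $\QQ$, yields the Cohen--Macaulay property, i.e.\ the existence of a homogeneous system of parameters $\theta_1,\ldots,\theta_r$ of degrees $d_1,\ldots,d_r$ over which $\orbitalgebra{G}$ is a free module.

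Cohen--Macaulayness immediately gives
\[
  \sum_{n \ge 0} \profile{G}(n)\, z^n \;=\; \frac{P(z)}{\prod_{i=1}^{r}(1-z^{d_i})},
\]
where $P(z)$ has non-negative integer coefficients and $P(1)>0$. Extracting the pole of highest order at $z=1$ by a partial fraction decomposition then gives $\profile{G}(n) \sim a n^k$, with $k+1$ the order of that pole and $a = P(1)/\bigl(k!\,\prod_{i \in S} d_i\bigr)$, where $S$ indexes the $d_i$ contributing to the dominant pole; positivity of $a$ is immediate from $P(1)>0$. This proves Conjecture~\ref{conjecture.cameron}.
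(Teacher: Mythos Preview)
Your high-level strategy---classify closed $P$-oligomorphic groups, identify the orbit algebra as (a quotient of) an invariant ring of a finite group, deduce finite generation and Cohen--Macaulayness, then read off the asymptotic from the Hilbert series---is exactly the paper's. The commutative-algebra half of your outline is sound; in particular, once one knows that $\orbitalgebra{G}\cong\bigl(\KK[X]/I\bigr)^{H}$ with $I$ generated by squares of some $H$-stable subset of variables, $\KK[X]/I$ is a complete intersection, hence Cohen--Macaulay, and Hochster--Eagon in characteristic $0$ gives the Cohen--Macaulayness of the invariants. This is a perfectly good alternative to the paper's explicit free-module argument (Lemma~\ref{lemma.cm.quotient}), and your extraction of $\profile{G}(n)\sim a n^{k}$ from the shape of the Hilbert series is correct.

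The genuine gap is in the classification step. Your picture---infinite blocks on each of which the block-stabilizer acts by one of the five highly homogeneous groups, so that the orbit algebra on a single block is $\KK[x]$ in one variable---is too coarse, and already fails for $G=\sg_{2}\wreath\sg_{\infty}$: this group is transitive, has no nontrivial infinite block, yet is not highly homogeneous, and its orbit algebra is a polynomial ring on \emph{two} generators of degrees $1$ and $2$. What actually happens (Sections~\ref{section.canonical}--\ref{section.finite_blocks}) is a \emph{nested} structure: one must first pass to the maximal block system with finite blocks, obtaining finitely many infinite \blocksofblocks; on each \blockofblocks the group is not highly homogeneous but rather of the form $\groupfiniteblocks[\P]{H_{0}}{H}$ for finite $H\triangleleft H_{0}$ and $\P$ highly homogeneous, and the minimal finite-index normal subgroup is $K=\prod_{j} H\BBindex\wreath\PM\BBindex$. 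The orbit algebra of each factor is then a free polynomial algebra on $\age{H\BBindex}^{+}$, with generators of assorted degrees, not a single degree-$1$ variable. The substantial work lies in proving this shape for a single \blockofblocks (the ``tower'' argument of Proposition~\ref{proposition.tower} and Lemma~\ref{lemma.ladder}) and then assembling the \blocksofblocks via the minimal finite-index subgroup (Proposition~\ref{proposition.K_minimum}) and a diagonal finite overgroup (Proposition~\ref{proposition.semidirect_product}). Your sketch does not yet contain these ingredients, and without them the identification of $\orbitalgebra{G}$ with an invariant ring cannot be carried out.
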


As a tool in this study, Cameron introduced early on the \textbf{orbit
  algebra} $\orbitalgebra{G}$ of $G$, a graded connected commutative
algebra whose Hilbert function coincides with $\profile{G}$. As proved
in~\cite{Pouzet.2008.IntegralDomain}, it is always an integral domain
(assuming empty kernel), implying that the profile is non decreasing.

Macpherson asked the following:
\begin{question}[Macpherson~\cite{Macpherson.1985} p.~286]
  Let $G$ be a $P$-oligomorphic permutation group. Is
  $\orbitalgebra{G}$ finitely generated?
\end{question}
The point is that, by standard commutative algebra, whenever
$\orbitalgebra{G}$ is finitely generated, its Hilbert function is
asymptotically polynomial, as conjectured by Cameron. It is in fact \emph{eventually
  a quasi polynomial} (with a constant leading coefficient). 
  Equivalently, the
generating series of the profile
$\profileseries{G}=\sum_{n\in \NN}\profile{G}(n)z^n$ is a rational
fraction of the form
\begin{displaymath}
  \profileseries{G} = \frac {P(z)}{\prod_{i\in I}(1-z^{d_i})}\,,
\end{displaymath}
where $P(z)$ is a polynomial in $\ZZ[z]$ and the $d_i$'s are the degrees of the
generators.

\subsection{Main results}

In the extended abstract \cite{FalqueThiery.Macpherson.FPSAC}, we had
announced positive answers to both Cameron's conjecture and
Macpherson's question with sketches of proof.
In this paper we deliver a much stronger result with full proofs,
namely the extension of the classification of the five oligomorphic groups with
constant profile $1$~\cite{Cameron.1990} to a classification of all closed
$P$-oligomorphic groups (Theorem~\ref{theorem.classification}):
any such group $G$ is uniquely classified by a
finite piece of data $\Delta$, encoded as a finite permutation group
endowed with a block system whose blocks are suitably decorated. This
data can be extracted from $G$ by a detailed analysis of its lattice
of \emph{nested block systems} (block systems of block systems);
conversely, the group can be reconstructed from the data using wreath
products of finite permutation groups with one of the five (closed)
highly homogeneous groups, direct products, and diagonal actions of
finite permutation groups.

We then derive from the classification that the orbit algebra of any
$P$-oligomorphic group $G$ (closed or not\footnote{The profile and
  orbit algebra of an oligomorphic group and those of its closure
  coincide.}) is isomorphic to (a straightforward quotient of) the
invariant ring of some finite permutation group. By standard invariant
theory, the latter is finitely generated and even Cohen-Macaulay.

It follows that the generating series of the profile is of the form
\begin{equation}
\label{hilbert_series}
  \profileseries{G} = \frac {P(z)}{\prod_{i\in I}(1-z^{d_i})}\,,
\end{equation}
where the fraction is irreducible and $P$ is in $\NN[z]$; therefore
we have $\profile{G}(n)\sim an^{|I|-1}$ for some $a>0$.

\subsection{Connections with profiles and age algebras of relational structures}

This research is part of a larger program initiated in the seventies:
the study of the \emph{profile of relational
  structures}~\cite{Fraisse.TR.2000,Pouzet.2006.SurveyProfile} and in
general of the behavior of counting functions for hereditary classes of
finite structures, like undirected graphs, posets, tournaments,
ordered graphs, or permutations; see~\cite{Klazar.2008.Overview,Bollobas.1998.HereditaryPropertiesOfGraphs} for surveys.

In this program, \emph{jumps} in the set of potential growth rates of profiles are 
a ubiquitous phenomenon: not all growths are encountered.
For instance, undirected graphs~\cite{Balogh_Bollobas_Saks_Sos.2009} and
permutation classes~\cite{Kaiser_Klazar.2003} both exhibit
such jumps:
the counting functions, when bounded by a polynomial, are actually
asymptotically equivalent to a polynomial.
Besides,
by~\cite[Theorem~1.7]{Pouzet_Thiery.AgeAlgebra1}, the class of
relational structures with \emph{finite monomorphic dimension} --
rough analogues of transitive groups with a finite number of infinite
blocks -- also exhibits these jumps. Thanks to our proof of
Cameron's conjecture, the same is known to hold for homogeneous 
relational structures, which
correspond to oligomorphic permutation groups.
This, together with evidence from many examples,
suggests that a suitable generalization of the notion of block systems
may enable to prove that large classes of relational structures
exhibit such jumps in the possible profile growths.

In~\cite{cameron.1997}, Cameron extends the definition of orbit
algebra to the general context of relational structures. The
Cohen-Macaulay property holds when the profile is bounded
(see~\cite[Theorem~26]{Pouzet.2006.SurveyProfile} and
\cite[Theorem~1.5]{Pouzet_Thiery.AgeAlgebra1}); it can fail as soon as
the profile grows faster. Similarly, finite generation often fails;
however, when the monomorphic dimension is finite, there exists a
combinatorial characterization of when it
holds~\cite{Pouzet_Thiery.AgeAlgebra2}.

\subsection{Structure of the paper}

This paper is structured as follows. In
Section~\ref{section.preliminaries}, we review the basic definitions
of profiles and orbit algebras, and provide classical examples and operations.
We then recall some group theory tools that we will be using a lot, such
as the central notion of block systems and the subdirect product of 
groups, that will later help us handle interactions between the blocks
of a given system. %

In Section~\ref{section.canonical}, we see that block systems provide
lower bounds on the growth of the profile. This motivates the study of
block systems and their lattice properties, which leads to the
identification of a canonical system of blocks of blocks.

The classification of closed $P$-oligomorphic groups with a single
block of blocks is established in Section~\ref{section.finite_blocks}.
It was informed by an extensive computer exploration where
$P$-oligomorphic groups were approximated by finite groups. The proof
exploits towers of groups and subdirect products to suitably control
synchronizations, and encapsulates the most technical aspects of this
paper.

Section~\ref{section.classification_general_case} builds on the
previous one to extend the classification to all $P$-oligomorphic
groups, after establishing the existence of a minimal finite index
subgroup, which, together with a finite group acting diagonally,
generates $G$.

Finally, in Section~\ref{section.consequences}, we deduce from the
classification that the orbit algebra is (a trivial quotient of) an
invariant ring, providing positive answers to Cameron's conjecture and
Macpherson's question, and further deducing that it is Cohen-Macaulay.

\subsection{Perspectives}

A notable outcome of the classification is that it provides a uniform
finite description of any $P$-oligomorphic group; otherwise said a
data structure. This prepares the ground for algorithms and computations
with $P$-oligomorphic groups: constructing $P$-oligomorphic groups
from their classification, combining them together (cartesian or
wreath products), computing their properties such as the generating
series of the profile, constructing the orbit algebras and doing
arithmetic with their elements, etc. The classification also paves the
way for computations about the class of $P$-oligomorphic groups: there are
indeed countably many of them and we may iterate through them, count
them by kernel size and profile growth, etc. These algorithms are
being implemented in a library on top of SageMath~\cite{SageMath} and are the topic of
a follow-up publication; see the first author's PhD dissertation~\cite{Falque.thesis} in the meantime.

The classification also enables proofs by structural induction: if a
property holds for the five closed permutation groups with profile $1$
and is stable under wreath products, cartesian products and taking
finite index group extensions, then it holds for any closed
$P$-oligomorphic group. It holds more generally for any permutation
group obtained by such an inductive construction. This class of
oligomorphic permutation groups would be worth studying for its own
sake. At this stage, it seems plausible that it contains all
oligomorphic permutation groups with subexponential profile growth,
and that classifying those is within reach. On the other hand there
is little hope beyond exponential growth, due to the appearance of new,
more complicated primitive groups.

\subsection{Acknowledgments}

We would like to heartily thank Maurice Pouzet for suggesting to work
on this conjecture, and Peter Cameron and Maurice Pouzet for
enlightening discussions and benevolent encouragements throughout the
project.

This research was supported by extensive computer exploration, using
the open source mathematical systems GAP~\cite{GAP4} and
SageMath~\cite{SageMath}. We are grateful to their communities for
their continuous support, notably at the occasion of dissemination
events of the OpenDreamKit Horizon 2020 European Research
Infrastructures project (\#676541).

\section{Preliminaries}
\label{section.preliminaries}

\subsection{The age, profile and orbit algebra of a permutation group}
\label{subsection.profile.age}

Let $G$ be a permutation group, that is, a group of permutations of some
set $E$. Unless stated otherwise, $E$ is denumerable and $G$ is infinite.
The action of $G$ on the elements of $E$ induces an action on the set
$\finitesubsets{E}$ of finite subsets of $E$.

The \textbf{age} of $G$ is the set $\age{G}$ of the orbits
of finite subsets in $\finitesubsets{E}$ under this action. 
Within an orbit, all subsets share the
same cardinality, which is called the \textbf{degree} of the orbit. This
gives a grading of the age according to the degree of the orbits:
$\age{G} = \sqcup_{n\in \NN} \age{G}_n$; we will also use the notation 
$\age{G}^+ = \sqcup_{n\in \NN \backslash \{0\}} \age{G}_n$.
The \textbf{profile} of $G$ is
the function $\profile{G}: n\mapsto |\age{G}_n|$. In general, the
profile may take infinite values; the group is called
\textbf{oligomorphic} if it does not. 

We call \textbf{growth rate} of a profile bounded by a polynomial the smallest number
$r$ satisfying $\profile{G}(n) = O(n^r)$;
for instance, the growth rate of $n^2 + n$ is $2$. %
By extension, the \textbf{growth rate of a permutation group} $G$
is that of its profile $\profile{G}$.

\begin{definition}
  We say that a permutation group is \textbf{$P$-oligomorphic} if its
  profile is bounded by a polynomial.
\end{definition}

\begin{examples}
\label{example.basic_profiles}
  Let $G$ be the infinite symmetric group $\mathfrak{S}_{\infty}$. For
  each $n$ there is a single orbit containing all subsets of size $n$,
  hence $\profile{G}(n)=1$ for all $n$. We say that $G$ is \emph{highly
  homogeneous}, or, more informally, that it \emph{has profile $1$}.

  Now take $E=E_1 \sqcup E_2$, where $E_1$ and $E_2$ are two copies of
  $\NN$. Let $G$ be the group acting on $E$ by permuting the elements
  independently within $E_1$ and $E_2$ and by exchanging $E_1$ and
  $E_2$: $G$ is the \textbf{wreath product} $\sg_\infty \wreath \sg_2$. In that
  case, the orbits of subsets of cardinality $n$ are in bijection with
  the integer partitions of $n$ with at most $2$ parts.
  See Figure~\ref{figure.wreath_infinite_blocks} and
  Figure~\ref{figure.wreath_finite_blocks} for other examples of
  wreath products.
\end{examples}

\input{\figures figure_poly_sym}

Cameron's \textbf{orbit algebra} of $G$ is the graded connected vector
space $\orbitalgebra{G}$ of formal finite linear combinations of
elements of $\age{G}$; it is endowed with a commutative product as
follows: let $\subsetalgebra{E}$ be the \emph{set algebra} of $E$, namely
the vector space $\subsetalgebra{E}$ of (possibly infinite) formal
linear combinations of finite subsets of $E$, endowed with the
\emph{disjoint product} that maps two finite subsets to their union if
they are disjoint and to $0$ otherwise. Embed the orbit algebra
$\orbitalgebra{G}$ in the set algebra $\subsetalgebra{E}$ through the
linear morphism $i_G$ that maps each orbit to the sum of its elements.
The disjoint product stabilizes $i_G(\orbitalgebra{G})$ and can thus be
retracted to $\orbitalgebra{G}$. Some care needs to be taken at each
step to check that everything is well defined;
see~\cite{Cameron.1990,Cameron.OPG.2009} for details.

\iflongversion
\begin{example}
\label{example.basic_orbit_algebras}
\begin{itemize}
    \item Assume, as an exception, that $G$ is a finite permutation group. 
     Then its orbit algebra is finitely generated (likely with redundancy) by 
     its age, of which all elements are nilpotent of finite order (bounded by 
     the degree of the finite group $G$). It is thus of Krull dimension 0.
	\item Take $G = \sg_m$ for some $m \in \NN$. The profile is 1 until $n = m$
	and 0 beyond, and the orbit algebra is finitely generated as a vector
	space: it is isomorphic to $\QQ[y]$, where $y$ is nilpotent of order $m+1$.
	Take this to its infinite analogue, that is, $G = \sg_\infty$;
	then, $\orbitalgebra{G}$ is isomorphic to the algebra $\QQ[x]$ of
	univariate polynomials.
    Indeed, if $e_n$ denotes the unique orbit of all subsets of size $n$, then
    the morphism which maps $x^n$ onto $n!e_n$ is a morphism of algebras
    (use that, the way the orbital product goes, we have $e_k e_\ell = {{k+\ell}\choose{k}} e_{k+\ell}$). 
	\item Take on the other end the trivial permutation
      group on $E$. Each orbit consists of a single subset,
      and the profile counts for each $n$ the subsets of size $n$ of
      $E$. When $E$
      is infinite, the group is not at all $P$-oligomorphic nor even oligomorphic;
	  its orbit algebra is the free algebra generated by
      the infinitely many singletons in $E$.
\end{itemize}
\end{example}
\fi

\subsection{Basic properties and operations}

We recall here a few technical basics about orbit algebras and profiles,
in particular dealing with subgroups or restrictions, that one expects indeed
to be able to manipulate in a natural way.

\begin{lemma}[Relations between orbit algebras]\ 
  \label{lemma.subalgebras}
  \begin{enumerate}
  \item Let $G$ be a permutation group acting on $E$, and $F$ be a
    stable subset of $E$. Then, $\orbitalgebra{\restrict{G}{F}}$ is both a
    subalgebra and a quotient of $\orbitalgebra{G}$.
  \item Let $G$ be a permutation group acting on $E$, and $H$ be a
    subgroup, both of which being oligomorphic. Then, $\orbitalgebra{G}$ is a subalgebra 
    of $\orbitalgebra{H}$.
  \end{enumerate}
\end{lemma}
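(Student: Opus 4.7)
The plan is to work with the embeddings $i_G \colon \orbitalgebra{G} \hookrightarrow \subsetalgebra{E}$ and $i_H \colon \orbitalgebra{H} \hookrightarrow \subsetalgebra{E}$ into the set algebra, which provides a common ambient space in which the disjoint product, the subalgebra inclusions, and the quotient map all become transparent.

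For part (1), the first step is to exploit the $G$-stability of $F$ to observe that the $G$-orbit of a finite subset $A \subseteq F$ is entirely contained in $F$ and coincides with the $\restrict{G}{F}$-orbit of $A$. Consequently $\age{\restrict{G}{F}}$ identifies with the subset of $\age{G}$ formed by those orbits whose elements are subsets of $F$. Let $V \subseteq \orbitalgebra{G}$ be the linear span of such orbits: the disjoint product of two subsets of $F$ is again a subset of $F$, hence $V$ is a subalgebra, and the identification above upgrades to an algebra isomorphism $\orbitalgebra{\restrict{G}{F}} \cong V$. For the quotient, I would define $\pi \colon \orbitalgebra{G} \to \orbitalgebra{\restrict{G}{F}}$ to send each orbit supported in $F$ to itself and each other orbit to $0$. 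This is linear and surjective by construction; multiplicativity reduces to a short case analysis: if at least one of two orbits $\omega_1, \omega_2$ is not supported in $F$, then by $G$-stability of $F$ no disjoint union of representatives can land in $F$, so both $\pi(\omega_1\omega_2)$ and $\pi(\omega_1)\pi(\omega_2)$ vanish; otherwise both sides equal the orbit of $A_1 \cup A_2$.

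For part (2), the key observation is that each $G$-orbit on finite subsets decomposes into a union of $H$-orbits, and by oligomorphy of $H$ this union is finite (since there are only finitely many $H$-orbits of any given degree). Consequently $i_G(\omega) = \sum_{\omega' \subseteq \omega} i_H(\omega')$ is a finite linear combination sitting inside $i_H(\orbitalgebra{H})$, which yields a linear inclusion $\orbitalgebra{G} \hookrightarrow \orbitalgebra{H}$. This inclusion is automatically an algebra morphism because both products are obtained by retracting the same disjoint product on $\subsetalgebra{E}$.

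The main obstacle, if any, lies in part (2): one must take care that the infinite sum $i_G(\omega)$ really represents an element of $i_H(\orbitalgebra{H})$, that is, a \emph{finite} formal linear combination of $H$-orbit sums. This is exactly where the oligomorphy hypothesis is used; note incidentally that assuming $H$ oligomorphic already forces $G$ oligomorphic since $H \le G$ has at least as many orbits as $G$ at every degree, so the two hypotheses are not independent. Everything else amounts to bookkeeping once the basic compatibility between $G$-stability, orbit supports, and the disjoint product in $\subsetalgebra{E}$ has been made explicit.
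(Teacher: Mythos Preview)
Your proposal is correct and follows essentially the same approach as the paper: both arguments work through the canonical embeddings $i_G$ and $i_H$ into the set algebra $\subsetalgebra{E}$, use the projection ``keep the subset if it lies in $F$, else send to $0$'' for the quotient map in part~(1), and invoke oligomorphy of $H$ in part~(2) to guarantee that each $G$-orbit is a \emph{finite} union of $H$-orbits so that $i_G(\orbitalgebra{G})\subseteq i_H(\orbitalgebra{H})$. Your write-up is in fact a bit more explicit than the paper's (you spell out the case analysis for multiplicativity of $\pi$, whereas the paper just records the commutative diagrams), but the underlying ideas are identical.
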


\begin{proof}
We exhibit the natural morphisms for each one of these cases.

\begin{enumerate}
	\item We have the following commutative diagram\\

\begin{center}
\begin{tikzcd}
\subsetalgebra{F} \ar[r,"\subseteq", hook] & \subsetalgebra{E}\\
\orbitalgebra{G_{|F}} \ar[u,"i_{G_{|F}}",hook] \ar[r,"\phi",hook]
\ar[d,"i_{G_{|F}}"',hook] &
\orbitalgebra{G} \ar[u,"i_G"',hook] 
\ar[l,"\psi",twoheadrightarrow,shift left=1ex] 
\ar[d,"i_G",hook] \\
\subsetalgebra{F} & \subsetalgebra{E} \ar[l,"\pi"',twoheadrightarrow]
\end{tikzcd}
\end{center}

where $\pi$ is the linear morphism mapping a subset of $E$ to itself if it is a subset 
of $F$ and to 0 otherwise. The injective morphisms $i_G$ and $i_{G_{|F}}$
are reversible where needed, which allows to define the respectively
injective and surjective morphisms $\phi$ and $\psi$ by composition.
	\item 
In the following diagram, $i_G$ and $i_H$ are the canonical embeddings of the orbit algebras into their set algebras.

\begin{center}
\begin{tikzcd}
 & \subsetalgebra{E} \\
 \orbitalgebra{G} \arrow[ur, hook, "i_G"] \arrow[r, "\phi"] & 
   \orbitalgebra{H} \arrow[u, hook, "i_H"']
\end{tikzcd}
\end{center}

The orbits of $G$ are unions of orbits (of same degree) of $H$, 
and since the groups are oligomorphic these unions are finite.
The image of $i_G$ is thus a subset of the image of $i_H$,
and therefore the diagram is commutative.\qedhere
\end{enumerate}
\end{proof}

\begin{lemma}[Direct product]\ 
  \label{lemma.operations.direct_product}
    Let $G$ and $H$ be permutation groups acting on $E$ and $F$
    respectively. Take $G\times H$ endowed with its natural action on 
    the disjoint union $E \sqcup F$.
    Then, $\age{G\times H}\simeq \age{G}\times\age{H}$, and
    $\orbitalgebra{G\times H} \simeq \orbitalgebra{G} \otimes
    \orbitalgebra{H}$; it follows that
    $\profileseries{G\times H}=\profileseries{G}\profileseries{H}$.
\end{lemma}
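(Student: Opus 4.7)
The plan is to reduce all three assertions to a single equivariant bijection on finite subsets and then propagate it through the orbit algebra construction. First, I would set up the fundamental bijection $\finitesubsets{E \sqcup F} \simeq \finitesubsets{E} \times \finitesubsets{F}$ given by $S \mapsto (S \cap E, S \cap F)$, with inverse $(A,B) \mapsto A \sqcup B$. This bijection is manifestly equivariant for the natural action of $G \times H$, where $(g,h)$ acts componentwise. Passing to orbits immediately yields $\age{G \times H} \simeq \age{G} \times \age{H}$, and since $|S| = |A| + |B|$ the grading by degree is respected: an orbit of degree $n$ on the left corresponds to a pair of orbits whose degrees sum to $n$.

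Next, I would upgrade this to an isomorphism of algebras. Extending linearly, the subset bijection induces a vector space isomorphism $\subsetalgebra{E \sqcup F} \simeq \subsetalgebra{E} \otimes \subsetalgebra{F}$. To see that it respects the disjoint product, observe that two subsets $S = A \sqcup B$ and $S' = A' \sqcup B'$ of $E \sqcup F$ are disjoint iff $A \cap A' = \emptyset$ and $B \cap B' = \emptyset$, in which case $S \cup S' = (A \cup A') \sqcup (B \cup B')$. Combined with equivariance, the embedding $i_{G \times H}$ of $\orbitalgebra{G \times H}$ into $\subsetalgebra{E \sqcup F}$ is identified with $i_G \otimes i_H$ under this isomorphism, yielding $\orbitalgebra{G \times H} \simeq \orbitalgebra{G} \otimes \orbitalgebra{H}$ as graded algebras.

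The identity on generating series then drops out: the Hilbert series of a tensor product of graded vector spaces is the product of the Hilbert series, so $\profileseries{G \times H} = \profileseries{G} \profileseries{H}$; equivalently the age bijection gives the convolution $\profile{G \times H}(n) = \sum_{k=0}^n \profile{G}(k) \profile{H}(n-k)$ directly. The whole argument is essentially formal once the equivariant subset bijection is established; the only point requiring a brief verification is that the disjoint product on $\subsetalgebra{E \sqcup F}$ factors through the tensor product of disjoint-product algebras, which is immediate from the set-theoretic description above. There is no genuine obstacle.
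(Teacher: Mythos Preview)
Your argument is correct and is exactly the natural one: the equivariant bijection $S\mapsto(S\cap E,\,S\cap F)$ between $\finitesubsets{E\sqcup F}$ and $\finitesubsets{E}\times\finitesubsets{F}$ does all the work, and your check that the disjoint product factors through the tensor product is the right verification for the algebra isomorphism.

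Note however that the paper does not actually give a proof of this lemma: it is stated as a basic property and left without justification, presumably because the argument is standard and essentially the one you wrote. So there is nothing to compare against beyond observing that your proof supplies precisely the elementary details the authors chose to omit.
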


\begin{lemma}
  \label{lemma.subgroup.rate}
  Let $G$ be a permutation group and $\fisubgroup$ be a normal subgroup of
  finite index. Then,
  \begin{displaymath}
    \profile{G}(n) \ \leq\ \profile{\fisubgroup}(n) \ \leq\ |G:\fisubgroup| \profile{G}(n)\,.
  \end{displaymath}
  In particular, $\fisubgroup$ and $G$ share the same profile growth.
\end{lemma}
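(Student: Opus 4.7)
The plan is to exploit the fact that, since $\fisubgroup\subseteq G$, every $G$-orbit on $n$-subsets decomposes as a disjoint union of $\fisubgroup$-orbits, and that normality of $\fisubgroup$ in $G$ allows the finite quotient $G/\fisubgroup$ to act on the set of $\fisubgroup$-orbits, realizing the $G$-orbits as its orbits.

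First I would prove the lower bound. For each $G$-orbit $\Omega$ of $n$-subsets, $\fisubgroup$-orbits are finer than $G$-orbits (as $\fisubgroup\subseteq G$), so $\Omega$ is a disjoint union of at least one $\fisubgroup$-orbit. Summing the contribution of each $G$-orbit yields $\profile{G}(n)\leq \profile{\fisubgroup}(n)$.

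Next I would prove the upper bound using normality. Since $\fisubgroup\triangleleft G$, for any $g\in G$ and any finite subset $X$, the set $gX$ depends, up to $\fisubgroup$-orbit, only on the coset $g\fisubgroup$: indeed if $g'=gk$ with $k\in\fisubgroup$ then $g'X=g(kX)$ lies in the same $\fisubgroup$-orbit as $gX$ (one could equivalently argue that $gKX = Kg X$). This defines an action of $G/\fisubgroup$ on the set of $\fisubgroup$-orbits of $n$-subsets, whose orbits are exactly the $G$-orbits. In particular each $G$-orbit contains at most $|G:\fisubgroup|$ many $\fisubgroup$-orbits. Summing again over the $G$-orbits on $n$-subsets gives $\profile{\fisubgroup}(n)\leq |G:\fisubgroup|\profile{G}(n)$.

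The last sentence is then immediate: the two inequalities sandwich $\profile{\fisubgroup}$ and $\profile{G}$ within a constant factor of each other, so any polynomial bound on one transfers (up to a constant) to the other, and in particular their polynomial growth rates coincide. There is no real obstacle: the argument is the standard orbit-counting bookkeeping for a normal subgroup of finite index, transported to the action on $\finitesubsets{E}_n$.
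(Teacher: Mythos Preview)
Your proof is correct and follows essentially the same approach as the paper: both argue that each $G$-orbit of $n$-subsets splits into $\fisubgroup$-orbits on which the finite quotient $G/\fisubgroup$ acts transitively, hence into at most $|G:\fisubgroup|$ of them. Your write-up is in fact somewhat more explicit than the paper's about why normality makes the action of $G/\fisubgroup$ on $\fisubgroup$-orbits well defined.
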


\begin{proof}
  Let $O$ be a $G$-orbit of elements.  Since $\fisubgroup$ is a normal 
  subgroup, $O$ splits into $\fisubgroup$-orbits on which $G$ -- 
  and actually $G/\fisubgroup$ -- acts transitively by
  permutation; there are thus finitely many such $\fisubgroup$-orbits, 
  all of the same size. In particular, infinite $G$-orbits split into infinite
  $\fisubgroup$-orbits, and similarly for finite ones.
\end{proof}

\subsection{Block systems and primitive groups}

A key notion when studying permutation groups is that of \textbf{block
  systems}; they are the discrete analogues of quotient modules in
representation theory. A \textbf{block system} is a partition of $E$
into parts, called \textbf{blocks}, such that each $g\in G$ maps blocks
onto blocks.

\begin{example}
Following is the list of all block systems of the cyclic permutation
group $\mathcal{C}_4$:
$\{\{1,2,3,4\}\}, ~\{\{1,3\}, \{2,4\}\}, ~\{\{1\}, \{2\}, \{3\}, \{4\}\}$.
\end{example}

  \newcommand{\diagonalblock}[1]{ \draw[draw=blue!100,
    thick](#1-2,0) ellipse (2cm and 5cm);
    \begin{scope}[xshift=#1cm]
      \draw[draw=blue, thick, rotate=45](0,0) ellipse (1.7cm and 5.6cm);
    \end{scope}
  }

\begin{center}
\begin{tikzpicture}

\node[circle, draw=black!0, fill=gray!40, scale=.6] (a1) at (0, 0){1};
\node[circle, draw=black!0, fill=gray!40, scale=.6] (a2) at (2, 0){2};
\node[circle, draw=black!0, fill=gray!40, scale=.6] (a3) at (2, 2){3};
\node[circle, draw=black!0, fill=gray!40, scale=.6] (a4) at (0, 2){4};
\draw[thick, draw=red!80](0, 0) circle (.35cm);
\draw[thick, draw=red!80](2, 0) circle (.35cm);
\draw[thick, draw=red!80](2, 2) circle (.35cm);
\draw[thick, draw=red!80](0, 2) circle (.35cm);
\draw[draw=blue, thick] (-.35,.35) arc (135:315:.5) ;
\draw[draw=blue, thick] (2.35,1.65) arc (-45:135:.5) ;
\draw[draw=blue, thick] (-.35,.35) -- (1.65,2.37);
\draw[draw=blue, thick] (.35,-.37) -- (2.35,1.65);
\begin{scope}[xshift=1cm, yshift=-1cm]
\begin{scope}[xshift=1cm, yshift=1cm, rotate=90]
\draw[draw=blue, thick] (-.35,.35) arc (135:315:.5) ;
\draw[draw=blue, thick] (2.35,1.65) arc (-45:135:.5) ;
\draw[draw=blue, thick] (-.35,.35) -- (1.65,2.37);
\draw[draw=blue, thick] (.35,-.37) -- (2.35,1.65);
\end{scope}
\end{scope}
\draw[draw=violet!70, thick] (0,-.68)--(2,-.68) arc (-90:0:.68) -- (2.68,2)  
arc (0:90:.68) -- (0,2.68) arc (90:180:.68) -- (-.68,0) arc (180:270:.68);
\end{tikzpicture}
\end{center}

The partitions $\{E\}$ and $\{\{e\} \suchthat e\in E\}$
are always block systems and are therefore called the trivial block systems.
A permutation group is \textbf{primitive} if it admits no non trivial
block system. By extension, an orbit of elements is \textbf{primitive} if
the restriction of the group to this orbit is primitive, and a block may
be called primitive if it cannot be refined into smaller non trivial blocks.

The following two theorems will be central in
our study.
\begin{theorem}[Macpherson~\cite{Macpherson.OIPG.1985} Theorem 1.1; see also~\cite{Cameron.1990} (3.21)]
  \label{primitive.hightly.homogeneous}
  The profile of an oligomorphic primitive permutation group is either
  the constant function $1$ or bounded below by an exponential.
\end{theorem}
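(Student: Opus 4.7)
The plan is to argue by contrapositive: assume $G$ is primitive and oligomorphic on the denumerable set $E$ and that $\profile{G}$ is not bounded below by any exponential; the goal is then to deduce $\profile{G}\equiv 1$, i.e. that $G$ is highly homogeneous. Suppose otherwise, and let $k\geq 2$ be the smallest integer for which $\profile{G}(k)\geq 2$: all $(k-1)$-subsets of $E$ lie in a single $G$-orbit, while there exist at least two distinct $G$-orbits on $k$-subsets, say $\mathcal{O}_1$ and $\mathcal{O}_2$, with representatives $A_1$ and $A_2$.

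The central tool I would invoke is Higman's theorem on orbital graphs: every non-diagonal orbit of $G$ on $E\times E$ defines a $G$-invariant graph on $E$, and $G$ is primitive if and only if each such graph is connected. Passing to $k$-subsets, the set of $G$-orbits on $k$-subsets provides a $G$-invariant coloring of $\binom{E}{k}$ which, by assumption, uses at least two colors. Primitivity together with the infiniteness of $E$ should force this coloring to be ``spread out'' over $E$: neither color can be confined to a proper block of a non-trivial equivalence relation, and connectivity of every orbital graph lets one translate finite configurations to positions arbitrarily ``far'' from any prescribed finite set.

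The heart of the argument is a cut-and-paste construction producing exponentially many orbits on subsets. For each binary word $w\in\{1,2\}^m$, I would choose pairwise disjoint $k$-subsets $B_1,\dots,B_m$ of $E$, with $B_i$ in $\mathcal{O}_{w_i}$ and the $B_i$'s placed sufficiently far apart relative to all orbital graphs, and set $S_w := B_1\sqcup\cdots\sqcup B_m$. The claim to prove is that, for $m$ large enough, the $G$-orbit of $S_w$ determines the unordered multiset $\{w_1,\dots,w_m\}$ \emph{together with} finer combinatorial invariants (e.g.\ induced substructures coming from a non-trivial orbital graph on pairs), yielding at least $2^m/m!$ orbits on $km$-subsets. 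This exponential lower bound contradicts the hypothesis on $\profile{G}$ and forces $\profile{G}(k)=1$ for every $k$.

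The main obstacle is exactly the final rigidity statement: showing that any $g\in G$ sending $S_w$ to $S_{w'}$ must map $\{B_1,\dots,B_m\}$ bijectively onto the analogous blocks of $S_{w'}$ and preserve their types. This should follow from an \emph{independence lemma} extracted from primitivity: when the $B_i$'s are chosen far apart in every orbital graph, the $G$-orbit of their union depends only on the unordered multiset of orbits of the individual $B_i$'s, with no hidden synchronization. Establishing such independence is the technical crux; it is where primitivity (via Higman connectivity), the infinite extent of $E$, and a careful ``anchor and separate'' combinatorial scheme must be leveraged together, and where I expect the bulk of the work in a fully rigorous proof to lie.
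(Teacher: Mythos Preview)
This theorem is not proved in the paper; it is quoted as a known result of Macpherson (1985) and used as a black box. There is therefore no proof in the paper against which to compare your proposal.

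That said, your sketch has a genuine gap worth flagging. The entire difficulty of Macpherson's theorem lies precisely in the step you label ``the technical crux'' and then defer: establishing that disjoint $k$-subsets chosen ``far apart'' behave independently. In an abstract primitive oligomorphic group there is no intrinsic metric, and connectivity of the orbital graphs does not by itself yield any useful separation or independence principle; primitive oligomorphic groups can have arbitrarily entangled orbit structure on large subsets. Macpherson's actual route is structural rather than a direct cut-and-paste: one shows that subexponential growth forces the group to be a Jordan group, and then invokes the classification of primitive Jordan groups. Your outline does not engage with any of this machinery, so as written it is a restatement of what must be proved rather than a proof.

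There is also a quantitative slip. The quantity $2^m/m!$ tends to $0$ as $m\to\infty$, so it cannot furnish an exponential lower bound. If only the unordered multiset $\{w_1,\dots,w_m\}$ is recoverable from the $G$-orbit of $S_w$, you obtain merely $m+1$ orbits of $km$-subsets, which is polynomial. The ``finer combinatorial invariants'' you allude to would therefore have to carry the entire argument, and you have neither identified what they are nor explained why primitivity supplies them.
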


In our study of Macpherson's question, all profiles are assumed to be
bounded by a polynomial, and therefore primitive groups always have profile
$1$: in other words, they are highly homogeneous. 
These groups are classified (up to \emph{closure}; see below).

\begin{theorem}[Cameron~\cite{Cameron.1990} (Section 3.4)]
  \label{profile.one.classification}
  There are only five closed highly homogeneous groups:
  \begin{compactenum}
  \item The automorphism group $\Aut(\QQ)$ of the rational chain
    (order-preserving bijections on $\QQ$);
  \item $\mathrm{Rev}(\QQ)$ (generated by $\Aut(\QQ)$ and a reflection);
  \item $\Aut(\QQ / \mathbb{Z})$, preserving the cyclic order (see $\QQ / \mathbb{Z}$ as a circle);
  \item $\mathrm{Rev}(\QQ / \mathbb{Z})$, generated by $\mathrm{Cyc}(\QQ / \mathbb{Z})$ and a reflection;
  \item $\sg_{\infty}$.
  \end{compactenum}
\end{theorem}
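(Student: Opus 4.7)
The plan is to show that any closed highly homogeneous group $G$ on a countable set $\support$ is the automorphism group of a canonical homogeneous relational structure, and then to classify these structures into the five cases on the list. The starting point is Fra\"iss\'e theory: any closed oligomorphic subgroup of $\sym(\support)$ coincides with the automorphism group of a countable homogeneous relational structure on $\support$, determined up to isomorphism by the $G$-orbits on ordered finite tuples. Classifying the possible groups thus reduces to classifying the Fra\"iss\'e ages compatible with high homogeneity.

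To enumerate these ages, I would encode $G$ through the sequence of subgroups $(H_k)_{k\geq 1}$, where $H_k \leq \sg_k$ is the image of the setwise stabilizer of any $k$-subset $S$ acting on $S$ (well-defined up to conjugacy thanks to transitivity on $k$-subsets). The number of $G$-orbits on ordered $k$-tuples with underlying set $S$ equals $k!/|H_k|$, so $(H_k)_k$ pins down the age of $G$. Natural compatibility under restriction to a face of size $k-1$ drastically restricts realizable sequences, forcing every $H_k$ to be one of: $\{e\}$; the order-$2$ subgroup generated by the full reversal; the cyclic group $\langle(1\,2\cdots k)\rangle$; the dihedral group $D_k$; or $\sg_k$. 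These five families correspond respectively to an invariant linear order, a linear order up to reversal, a cyclic order, a cyclic order up to reversal, or no invariant structure at all.

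The low-$k$ analysis is the heart of the argument. One has $H_2 \in \{\{e\}, \sg_2\}$. If $H_2 = \{e\}$, then $G$ preserves an asymmetric total binary relation; $3$-homogeneity then forces the tournament induced on every triple to have a common isomorphism type, leaving only a chain (linear order) or a $3$-cycle (cyclic order). If $H_2 = \sg_2$, then $G$ is $2$-transitive and $H_3$ must be a transitive subgroup of $\sg_3$: either $\sg_3$ (in which case iterating at higher $k$ forces full transitivity and, by closedness, $G = \sg_{\infty}$) or $A_3$ (which yields an invariant orientation of triples that, combined with $2$-transitivity, produces the reversed linear or cyclic cases). For each surviving sequence, Fra\"iss\'e's theorem provides a unique countable homogeneous structure (via Cantor's back-and-forth argument and its cyclic analogue), whose closed automorphism group is $G$, identified with one of the five groups on the list.

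The main obstacle is the compatibility analysis itself: ruling out exotic $(H_k)_k$ sequences amounts to showing that, under high homogeneity, the only invariant relational structures are those coming from (possibly reversed) linear or cyclic orders. This rests on the rigidity of $\aleph_0$-categorical order structures and on the observation that any additional invariant symmetry either collapses back to one of the five listed types or, conversely, is forced to disappear, promoting $G$ to $\sg_{\infty}$.
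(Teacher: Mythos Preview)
The paper does not prove this theorem: it is stated as a known result and attributed to Cameron's monograph, with no proof given in the text. So there is no ``paper's own proof'' to compare against; the authors simply import the classification as a black box.

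Your sketch follows the standard route (essentially Cameron's original argument): pass to the canonical homogeneous structure via Fra\"iss\'e theory, encode the group by the sequence of setwise-stabilizer images $(H_k)_k$, and run a case analysis starting at $k=2,3$. The outline is sound, but the load-bearing step --- that compatibility under restriction forces each $H_k$ into the list $\{e\}$, $\langle\text{reversal}\rangle$, $C_k$, $D_k$, $\sg_k$ --- is asserted rather than argued. In a full proof this is exactly where the work lies: one must show, for instance, that if $H_3=\sg_3$ then $H_k=\sg_k$ for all $k$ (this is where results of Livingstone--Wagner type, or a direct induction exploiting $k$-homogeneity, enter), and that if $H_3=A_3$ the invariant ternary relation genuinely extends coherently to a betweenness or separation relation. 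Your last paragraph acknowledges this as ``the main obstacle'' but does not discharge it. As a high-level plan the proposal is correct and matches the literature; as a proof it leaves the central combinatorial lemma unproved.
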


In the vocabulary of model theory, these are the groups preserving, respectively,
the dense linear order, the betweenness order, the circular order,
the separation relation, and a pure set.

The notion of closure refers here to the topology of simple
convergence, described in Section~2.4 of \cite{Cameron.1990}. Thanks
to the following classical lemma, it plays only a minor role for our purposes.

\begin{lemma} \label{lemma.completion_same_profile_and_orbits}
A permutation group and its closure share the same profile and orbit algebra.
\end{lemma}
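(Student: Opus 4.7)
The plan is to reduce the claim to the statement that $G$ and its closure $\overline{G}$ have exactly the same orbits on the set $\finitesubsets{E}$ of finite subsets of $E$. Once this is established, both the profile $\profile{G}$ (which just counts these orbits by size) and the orbit algebra $\orbitalgebra{G}$ (which is built canonically out of the graded set $\age{G}$ via the disjoint-product embedding into $\subsetalgebra{E}$) automatically coincide for $G$ and $\overline{G}$: the underlying graded vector space, the embedding $i_G$, and the retraction of the disjoint product are all functions only of the partition of $\finitesubsets{E}$ into orbits.

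Next, I would unpack the topology of simple (pointwise) convergence, recalling that a permutation $g$ of $E$ lies in $\overline{G}$ exactly when, for every finite $F \subseteq E$, there exists $h \in G$ with $\restrict{h}{F} = \restrict{g}{F}$ (and similarly for $g^{-1}$, which one gets from the group topology). The inclusion $G \subseteq \overline{G}$ immediately implies that every $G$-orbit on $\finitesubsets{E}$ is contained in a $\overline{G}$-orbit.

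For the reverse inclusion, take two finite subsets $A, B \in \finitesubsets{E}$ lying in the same $\overline{G}$-orbit, so that $g(A) = B$ for some $g \in \overline{G}$. Apply the characterization of $\overline{G}$ to the finite set $F := A$: there exists $h \in G$ with $\restrict{h}{A} = \restrict{g}{A}$, so that $h(A) = g(A) = B$. Hence $A$ and $B$ already lie in the same $G$-orbit, proving the reverse containment.

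The only mild subtlety, which is where I expect the presentation to require care rather than real work, is checking that the approximating element $h$ may be chosen in $G$ itself (not merely in some larger set of partial maps): this is immediate from the stated description of the topology in Section~2.4 of~\cite{Cameron.1990}, since convergence is taken inside the symmetric group of $E$. Having established the bijection between the two sets of orbits, the equality of profiles is tautological, and the equality of orbit algebras follows because the commutative diagram defining the disjoint product on $\orbitalgebra{G}$ via $i_G$ is literally the same diagram for $\overline{G}$.
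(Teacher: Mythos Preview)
Your proof is correct and follows essentially the same approach as the paper: both reduce the claim to showing that $G$ and $\overline{G}$ have the same orbits on finite subsets, and both prove this by approximating an element of the closure on a given finite set $A$ by an element of $G$. The only cosmetic difference is that the paper phrases this via a convergent sequence $(\sigma_i)_i$ in $G$, while you use the equivalent finite-restriction characterization of the pointwise topology directly.
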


\begin{proof}
  Let $G$ be a permutation group acting on a set $E$, and
take two finite subsets $A$ and $A'$ that are in the same orbit for its closure:
there exists a permutation $\sigma$ of the closure that maps $A$ to
$A'$. By definition of the closure (for the simple convergence),
there exists a sequence $(\sigma_i)_i$ of permutations  in $G$ that
coincide with $\sigma$ on more and more elements of $E$. Eventually,
$\sigma_i$ will coincide with $\sigma$ on $A$, and therefore $A$ and
$A'=\sigma_i(A)$ are in the same $G$-orbit.
The age is thus the same for $G$ and its closure, and with it the profile
and the orbit algebra.
\end{proof}

We make the following remark, that will prove crucial later on.

\begin{remark}
\label{remark.profile_one_fi_subgroups}
The set of the (closed) highly homogeneous permutation groups is
stable under taking finite index normal subgroups. Among those, three
out of five, $\Aut(\QQ)$, $\Aut(\QQ/\ZZ)$ and $\sg_\infty$, have no
proper finite index normal subgroup. We will refer to them as
\emph{the three minimal highly homogeneous groups}.
\end{remark}

\begin{lemma}
\label{lemma.sym_on_finite_blocks}
Let $G$ be a (closed) $P$-oligomorphic permutation group, endowed with a
block system.
If an infinite $G$-orbit of blocks is primitive and the blocks 
of the orbit are not singletons, 
then the action on these blocks is isomorphic to $\sg_\infty$.
\end{lemma}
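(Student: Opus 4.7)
The plan is to restrict to $U := \bigcup_{B \in \mathcal{O}} B$ and consider $H := \restrict{G}{U}$, which remains $P$-oligomorphic by Lemma~\ref{lemma.subalgebras}. Writing $\bar H$ for the induced action of $H$ on $\mathcal{O}$, a clean first step is to observe that $\profile{\bar H}(n) \leq \profile{H}(n)$: distinct $\bar H$-orbits of $n$-subsets of blocks lift (by picking one element per block) to distinct $H$-orbits of $n$-subsets of $U$. Hence $\bar H$ is $P$-oligomorphic, primitive and infinite; Theorems~\ref{primitive.hightly.homogeneous} and~\ref{profile.one.classification}, together with Lemma~\ref{lemma.completion_same_profile_and_orbits} to pass to the closure, then imply that $\bar H$ must have the orbit structure of one of the five highly homogeneous groups.

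It remains to rule out the four groups other than $\sg_\infty$, namely those preserving a linear order, betweenness relation, cyclic order, or separation. In each of these cases, the setwise stabilizer in $\bar H$ of any $n$-subset of $\mathcal{O}$ embeds in the dihedral symmetries of that $n$-subset, hence has size at most $2n$. The idea is then to exploit that the blocks have at least two elements in order to convert this rigidity on blocks into exponentially many $H$-orbits on subsets of $U$. Concretely, I would pick $n$ blocks $B_1, \ldots, B_n \in \mathcal{O}$ together with distinct elements $a_i \neq b_i$ in each, and for each $\mathbf{m} \in \{1,2\}^n$ form the subset
\[
S_\mathbf{m} \;=\; \{a_i : 1\leq i\leq n\} \;\cup\; \{b_i : m_i = 2\} \;\subseteq\; U.
\]
An element $h \in H$ identifying two such subsets must induce a block permutation in the setwise stabilizer of $\{B_1,\ldots,B_n\}$ carrying the multiplicity sequence $\mathbf{m} = (|S_\mathbf{m} \cap B_i|)_i$ to $\mathbf{m}'$. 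Restricting to $\mathbf{m}$'s of a fixed sum (say $\lfloor 3n/2\rfloor$, yielding $\binom{n}{\lfloor n/2\rfloor}$ tuples, all producing subsets of the same cardinality) and dividing by the stabilizer size of at most $2n$, one obtains $\Omega(2^n/n^{3/2})$ distinct $H$-orbits of subsets of a single cardinality, contradicting $P$-oligomorphism.

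The main obstacle is to establish the dihedral bound on the stabilizer size uniformly across the four non-$\sg_\infty$ cases. This is handled by noting that each preserved relational structure restricts to a rigid structure on any finite subset of $\mathcal{O}$, so the setwise stabilizer of an $n$-subset acts on that subset through a subgroup of the symmetry group of the induced finite configuration: trivial for $\Aut(\QQ)$, of order $2$ for $\Rev(\QQ)$, cyclic of order $n$ for $\Aut(\QQ/\ZZ)$, and at worst dihedral of order $2n$ for $\Rev(\QQ/\ZZ)$. A small verification that $\mathbf{m}$ is recoverable from $S_\mathbf{m}$ via $m_i = |S_\mathbf{m}\cap B_i|$ closes the argument.
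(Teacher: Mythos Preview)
Your proof is correct and follows essentially the same approach as the paper: both reduce to the five highly homogeneous groups, then encode binary words $u\in\{1,2\}^n$ as subsets by picking $u_i$ elements from the $i$-th block, and use the rigidity of the four non-symmetric groups (trivial, order-$2$, cyclic, or dihedral stabilizer on an $n$-set) to obtain exponentially many inequivalent subsets. Your version is a bit more careful---you explicitly bound $\profile{\bar H}$ by $\profile{H}$ and restrict to a fixed cardinality, whereas the paper sums $\profile{G}$ over degrees $m$ through $2m$---but the core argument is the same.
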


\begin{proof}
  Using Theorem~\ref{profile.one.classification}, the action on the
  set of blocks is given by one of the five closed highly homogeneous
  groups.

  Assume first that $G$ acts as $\Aut(\QQ)$ on the blocks. Take
  $m\in \NN$. Choose an $m$-tuple of distinct blocks and, for each
  word $u=u_1\cdots u_m$ in the letters $1$ and $2$, choose a set
  $A_u$ of size $u_1+\cdots+u_m$ by picking, for each $i$, $u_i$
  elements from the $i$-th block. Since $G$ preserves the blocks and
  cannot swap their order, $(A_u)_u$ is a collection of $2^m$ non
  isomorphic sets of size between $m$ and $2m$. Hence
  $\sum_{n=m}^{2m} \profile{G}(n) \geq 2^m$. Taking $m$ large enough,
  we obtain a contradiction with $G$ being $P$-oligomorphic.

  The argument extends straightforwardly to the three other non
  symmetric highly homogeneous groups: the words $u$ just need to be
  considered up to a reflection, a cyclic permutation of the letters,
  or both; neither changes the exponential growth.
\end{proof}

\subsection{Wreath products}

Let $G$ and $H$ be permutation groups acting on $E$ and $F$
    respectively. Intuitively, the \textbf{wreath product} $G\wreath H$ acts on
    $|F|$ copies $(E_f)_{f\in F}$ of $E$, by permuting elements within each
    copy of $E$ independently according to $G$ and permuting the
    copies according to $H$. By construction, the partition
    $(E_f)_{f\in F}$ forms a block system, and $G\wreath H$ is not
    primitive (unless $G$ or $H$ is and $F$ or $E$, respectively, 
    is of size 1).

\begin{examples}[Algebras of wreath products]\ 
  \label{example.wreath_products}
  \begin{enumerate}
  \item Let $G$ be the wreath product $\sg_\infty \wreath \sg_k$. The
    profile counts integer partitions with at most $k$ parts.
    The orbit algebra is the algebra of symmetric polynomials over $k$
    variables, that is, the free commutative algebra with generators of
    degrees $1,\dots,k$. The generating series of the profile is given
    by $\profileseries{G} = \frac{1}{\prod_{d=1,\ldots,k} (1-z^d)}$.
    
    See also Figure~\ref{figure.wreath_infinite_blocks}, on which the
    red and blue subsets are in the same orbit. The associated
    integer partition $(3,1)$ can be read on the blue subset.
  \item Let $G'$ be a finite permutation group. Then, the orbit
    algebra of $G=\mathfrak{S}_{\infty} \wreath G'$ is isomorphic to the
    \textbf{invariant ring} $\KK [X]^{G'}$, which consists of the
    polynomials in $\KK[X]=\KK[X_1,\ldots,X_k]$ that are invariant
    under the action of $G'$.
  \item Let $G'$ be a finite permutation group. Then, the orbit
    algebra of $G= G'\wreath \mathfrak{S}_{\infty}$ is the free
    commutative algebra generated by the the set $\age{G'}^+$ of the 
    $G'$-orbits of non trivial subsets. The generating series of the profile is given by
    $\profileseries{G} = \frac{1}{\prod_{d} (1-z^d)}$, where $d$ runs
    through the degrees of $\age{G'}^+$, taken with multiplicity.
  \end{enumerate}
\end{examples}

\begin{proof}[Sketch of proof]
The first item is a special case of the second one, that we examine now.
Two subsets having the same number of elements in each infinite block
are in the same orbit, so if one canonically embeds the orbit algebra
of $G$ into the set algebra of $\domain$,
it is a subspace of that generated by the infinite sums 
\[
S_{\alpha} = \sum_{\text{card}(e \cap B_i) = \alpha_i} {\textit{\large e}} ~~,
\qquad \qquad \alpha = (\alpha_1, \ldots, \alpha_k) 
\]
for each multi-index $\alpha$ of length the number of infinite blocks
(that is the degree of $G'$), $B_i$ being the $i$-th 
block and $\textit{\large e}$ the subsets of $\domain$.
Now use the morphism $S_{\alpha} \mapsto X^{\alpha} = \prod_i X_i ^{\alpha_i}$
to embed $\orbitalgebra{G}$ into the algebra of polynomials.
The action of $G'$ on the blocks
acts the variables the same way, so the image of $\orbitalgebra{G}$
is the algebra of invariants of $G'$.\\

Set $G= G'\wreath \mathfrak{S}_{\infty}$ in order to prove the third item.
A canonical one-to-one correspondence can easily be established 
between $G$-orbits and multisets of $\age{G'}^+$: a finite subset of $\domain$ 
consists of a disjoint union of subsets that are included in the blocks, and thus 
each $G$-orbit is determined by the non trivial $G'$-orbits of these subsets, 
while the order does not matter. Since, the Hilbert series only depends on the
structure of graded vector space, it is then
$\frac{1}{\prod_d (1-z^{d})}$,
where the $d$'s are the orbital degrees of $\age{G'}^+$, the set of generators.

Define now an alternative notion of degree $\delta$,
the number of blocks involved in (the representatives of) an orbit.
In the orbit algebra $\orbitalgebra{G}$, the product of two orbits $O_1$ and
$O_2$ has one and only one dominant term for $\delta$, followed by lower 
degree terms (we say that $\delta$ is a \emph{filtration}, but we will
not go into the details about this notion). 
It is easy to see that the dominant term is the orbit that
corresponds to the multiset $\{O_1, O_2\}$.
Therefore, every $G$-orbit can be obtained as the dominant term of such a product
of $G'$-orbits in $\orbitalgebra{G}$, 
and since $\delta$ decreases on the other terms, it can actually 
be realized as a linear combination of products of $G'$-orbits (this can be
argued by induction on $\delta$).
Hence $\age{G'}^+$ generates all of $\orbitalgebra{G}$.

On the other hand, and by homogeneity, the shape of the Hilbert series 
imply that it is also a free family of elements: one can consider the
canonical morphism with the corresponding polynomial algebra (with
indeterminates of the degrees of $\age{G'}^+$), and 
deduce by dimension that it is an isomorphism.
\end{proof}

\input{\figures figure_partitions_blocks_finis}

We recall in the sequel some properties of wreath products that will
prove helpful later on.

\begin{proposition}
\label{proposition.wreath_quotient}
Let $F_1$, $F_2$, $P_1$ and $P_2$ be permutation groups such that $F_1$ (resp.
$P_1$) is a normal subgroup of $F_2$ (resp. $P_2$).
Then, $F_1 \wreath P_1$ is a normal subgroup of $F_2\wreath P_2$, 
and we have $(F_2 \wreath P_2) / (F_1 \wreath P_1) \simeq (F_2/F_1) \wreath (P_2/P_1)$.
\end{proposition}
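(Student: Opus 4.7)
The plan is to exhibit a surjective group homomorphism
\[
\pi \colon F_2 \wreath P_2 \twoheadrightarrow (F_2/F_1) \wreath (P_2/P_1)
\]
whose kernel is exactly $F_1 \wreath P_1$, and then invoke the first isomorphism theorem; this will simultaneously establish the normality of $F_1 \wreath P_1$ in $F_2 \wreath P_2$ and the claimed quotient isomorphism.

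To write $\pi$ down, I use the standard semidirect-product description of the wreath product: letting $X$ denote the set on which $P_2$ (and hence $P_1$) acts, we have $F_i \wreath P_i = F_i^X \rtimes P_i$, where $P_i$ acts on $F_i^X$ by permuting coordinates. Applying the canonical projection $F_2 \twoheadrightarrow F_2/F_1$ coordinatewise on the base and $P_2 \twoheadrightarrow P_2/P_1$ on top assembles into the natural candidate
\[
\pi \colon (f, p) \longmapsto (\bar f, \bar p), \qquad \bar f(x) = f(x) F_1, \quad \bar p = p P_1.
\]

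Next I would verify multiplicativity. The wreath product law is $(f_1, p_1)(f_2, p_2) = (f_1 \cdot (p_1 \cdot f_2),\, p_1 p_2)$, so checking that $\pi$ is a homomorphism reduces to the identity $\overline{p \cdot f} = \bar p \cdot \bar f$, i.e.\ to the fact that coordinatewise reduction modulo $F_1$ commutes with the permutation of coordinates by $P_2$. This is the functoriality of the quotient and uses only that $F_1$ is normal in $F_2$. Surjectivity is immediate since any pair $(\bar g, \bar q)$ on the target admits a componentwise lift.

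Finally I would compute the kernel: $\pi(f,p) = (1,1)$ precisely when $f(x) \in F_1$ for every $x \in X$ and $p \in P_1$, i.e.\ when $(f,p) \in F_1^X \rtimes P_1 = F_1 \wreath P_1$. The first isomorphism theorem then concludes the argument. The only mildly delicate point is the action-compatibility check inside the semidirect product; everything else is a routine unfolding of the definition of the wreath product.
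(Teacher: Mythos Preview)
Your proof is correct and follows essentially the same approach as the paper: both exhibit the natural componentwise quotient map $((g_i)_i,\,p)\mapsto((g_iF_1)_i,\,pP_1)$ and identify its kernel as $F_1\wreath P_1$. The paper's version is in fact only a sketch that writes down this same map and asks the reader to ``check that [it] defines a natural isomorphism''; your write-up is the more explicit one, spelling out the multiplicativity check and the kernel computation that the paper leaves implicit.
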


\begin{proof}[Sketch of proof]
  Denote each element of $(F_2 \wreath P_2)$ as $g=((g_i)_i, p)$, with
  $i$ running through the domain of $P_2$, $g_i$ in $F_2$ for each
  $i$, and $p$ in $P_2$. Then check that the following defines a
  natural isomorphism:
  \begin{displaymath}
    \begin{cases}
      (F_2 \wreath P_2) / (F_1 \wreath P_1) &\longrightarrow (F_2/F_1) \wreath (P_2/P_1)\\
      g.(F_1\wreath P_1) &\longmapsto ((g_i.F_1)_i,\, p.P_1)
    \end{cases}\qedhere
  \end{displaymath}
\end{proof}

\begin{corollary}
\label{corollary.subwreath_index}
If $P_1$ and $P_2$ have finite degree $m$, we have
\[ [F_1 \wreath P_1 : F_2 \wreath P_2] = [F_1 : F_2]^m [P_1 : P_2]\,. \]
If the degree of $P_1$ and $P_2$ is infinite, $F_1 \wreath P_1$ is of infinite
index in $F_2\wreath P_2$ as soon as $F_1$ is a proper subgroup of $F_2$.
\end{corollary}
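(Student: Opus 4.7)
The plan is to reduce the statement directly to Proposition~\ref{proposition.wreath_quotient}, which identifies the quotient $(F_2 \wreath P_2)/(F_1 \wreath P_1)$ with the wreath product $(F_2/F_1) \wreath (P_2/P_1)$. Hence the index of $F_1 \wreath P_1$ in $F_2 \wreath P_2$ equals the cardinality of this latter wreath product, and the whole task reduces to computing the order of a wreath product.

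For the finite-degree case, I would invoke the elementary fact that if $B$ is a permutation group of degree $m$ and $A$ is any group, then $A \wreath B$ has underlying set $A^m \times B$, so $|A \wreath B| = |A|^m \cdot |B|$. Applying this with $A = F_2/F_1$ and $B = P_2/P_1$ — noting that $P_2/P_1$ still acts on a set of size $m$, since $P_1$ is a normal subgroup of $P_2$ and the induced action is on the same $m$-element domain — and plugging in $|A| = [F_2 : F_1]$ and $|B| = [P_2 : P_1]$, yields the claimed formula.

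For the infinite-degree case, assume $F_1$ is a proper subgroup of $F_2$, so that $|F_2/F_1| \geq 2$. The underlying set of $(F_2/F_1) \wreath (P_2/P_1)$ contains the infinite cartesian power $(F_2/F_1)^I$, where $I$ is the (infinite) domain on which $P_2$ acts; since each factor has at least two elements, this power is already infinite. Therefore $(F_2/F_1) \wreath (P_2/P_1)$ is infinite, and so is the index $[F_2 \wreath P_2 : F_1 \wreath P_1]$.

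I do not foresee any significant obstacle, since all of the substantive content has been packaged into Proposition~\ref{proposition.wreath_quotient}. The only minor point worth flagging is that the paper's convention uses the full (unrestricted) wreath product, matching the setup in which $F \wreath P$ permutes a disjoint union of copies of the base set without requiring all but finitely many factors to be trivial; this is what makes the infinite direct power argument conclusive in the infinite-degree case.
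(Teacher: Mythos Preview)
Your proposal is correct and follows essentially the same approach as the paper: deduce the index from Proposition~\ref{proposition.wreath_quotient} together with the order formula $|F\wreath P|=|F|^m|P|$ for a wreath product over a degree-$m$ action. Your treatment of the infinite-degree case (via the infinite power $(F_2/F_1)^I$) merely spells out what the paper leaves implicit, and your remark that the quotient wreath product is taken over the same $m$-element domain is exactly the point needed.
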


\begin{proof}
This is a direct consequence of Proposition~\ref{proposition.wreath_quotient}
and the fact that a wreath product $F\wreath P$ is of order 
$|F|^m |P|$ where $m$ is the degree of the permutation group $P$.
\end{proof}

\subsection{Subdirect products}
\label{subsection.subdirect_and_synchro}

The previous two subsections dwelled on particular, well described and
well understood kinds of $P$-oligomorphic groups. In
Section~\ref{section.canonical} we will see that the support $E$ of
any $P$-oligomorphic group $G$ splits up into pieces on each of which
$G$ essentially acts as one of those well understood groups. In the
later sections, we will recover the desired properties of $G$
(profile, orbit algebra) by controlling how the different pieces
interact together using the classical notion of \emph{subdirect
product}, which we recall here.

Let us start with the intuition. Assume that $E$ splits into
$G$-stable subsets $(E_i)_i$. The actions of $G$ on each $E_i$ are
not independent from each other in general: there may be partial or
full synchronization between the actions, which has consequences on
the profile and the orbit algebra. For two subsets this can be
formalized by describing $G$ as a subdirect product. The general case
can then be treated by induction.

\begin{definition}
 Let $G_1$ and $G_2$ be groups. A \textbf{subdirect product} of $G_1$ and 
 $G_2$ is a subgroup of $G_1 \times G_2$ which projects onto each 
factor under the canonical projections.
\end{definition}

For instance, assume $G$ is a permutation group that has two orbits
of elements $E_1$ and $E_2$. Then $G$ is a subdirect product of the
groups $G_1$ and $G_2$ induced respectively on $E_1$ and $E_2$.
Denote $N_1 = \fix_G (E_2)$ and $N_2 = \fix_G (E_1)$, the pointwise stabilizers of
$E_2$ and $E_1$, respectively. Then, $N_1$ and $N_2$ are normal
subgroups of $G$ and their intersection is trivial, so that we have 
$<N_1, N_2>  \simeq N_1 \times N_2$.

\begin{definition}
We call \textbf{synchronization} between $G_1$ and $G_2$ the following isomorphic
quotients (expressed after restriction of $N_i$ when needed):
\[\frac{G_1}{N_1} \simeq \frac{G}{N_1 \times N_2} \simeq \frac{G_2}{N_2} . \]
\end{definition}

Intuitively, these quotients describe the parts of each group that are
synchronized, whereas the $N_i$ are the independent parts.

\begin{proposition}
\label{prop.subdirect_characteristic}
Let $G$ be a subdirect product of $G_1$ and $G_2$. With the above notations,
we have \[G \simeq \{ (g_1, g_2) \in G_1\times G_2 \suchthat g_1 N_1 = g_2 N_2 \}. \]
\end{proposition}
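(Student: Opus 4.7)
The plan is to prove this by recognizing it as Goursat's lemma and constructing the synchronization isomorphism explicitly. Since $N_1 \trianglelefteq G$ and $N_1$ acts trivially on $E_2$, after identifying $G$ with a subgroup of $G_1 \times G_2$ via the two restrictions, $N_1$ corresponds to $G \cap (G_1 \times \{1\})$ and so can be viewed as a normal subgroup of $G_1$; similarly $N_2 \trianglelefteq G_2$. The three quotients in the synchronization are isomorphic via the two canonical projections, and I would just fix the resulting isomorphism $\sigma\colon G_1/N_1 \to G_2/N_2$.

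First I would check $\sigma$ is well-defined. Given $g_1 \in G_1$, subdirectness yields some $g_2 \in G_2$ with $(g_1, g_2) \in G$; set $\sigma(g_1 N_1) := g_2 N_2$. If $(g_1, g_2') \in G$ as well, then $(1, g_2'g_2^{-1}) \in G$, hence $g_2'g_2^{-1} \in N_2$, so the image in $G_2/N_2$ does not depend on the choice of lift. The same reasoning with the roles swapped shows $\sigma$ is a bijective group homomorphism.

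Next, the forward inclusion $G \subseteq \{(g_1,g_2) \in G_1\times G_2 \mid g_1 N_1 = g_2 N_2\}$ is immediate from the definition of $\sigma$: any $(g_1, g_2) \in G$ satisfies $\sigma(g_1 N_1) = g_2 N_2$. For the reverse inclusion, suppose $(g_1, g_2) \in G_1 \times G_2$ with $g_1 N_1 = g_2 N_2$ (meaning $\sigma(g_1 N_1) = g_2 N_2$). Pick any lift $(g_1, h_2) \in G$; then $h_2 N_2 = \sigma(g_1 N_1) = g_2 N_2$, so $g_2 = h_2 n_2$ for some $n_2 \in N_2$. Since $(1, n_2) \in N_2 \subseteq G$, we get $(g_1, g_2) = (g_1, h_2)(1, n_2) \in G$, which concludes.

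There is no real obstacle here — the result is essentially Goursat's lemma applied to a subdirect product, and the only subtlety is keeping track of the identification of $N_1$ and $N_2$ with subgroups of $G_1$ and $G_2$ respectively (rather than of $G$), in order to write expressions like $g_1 N_1$ and $g_2 N_2$ on the appropriate side. The synchronization isomorphism statement $G_1/N_1 \simeq G/(N_1 \times N_2) \simeq G_2/N_2$ follows as a byproduct from the fact that $\sigma$ is an isomorphism whose graph, pulled back, is exactly $G/(N_1 \times N_2)$.
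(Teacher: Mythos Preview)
Your proof is correct and is the standard argument for Goursat's lemma. Note, however, that the paper does not actually give a proof of this proposition: it is stated as a classical fact and immediately followed by the remark that it characterizes subdirect products by the data $(G_1, G_2, N_1, N_2)$. So there is nothing to compare against; you have supplied the omitted (well-known) proof, and your handling of the one delicate point---interpreting the equality $g_1 N_1 = g_2 N_2$ via the synchronization isomorphism $\sigma\colon G_1/N_1 \to G_2/N_2$, since the two cosets live in different quotient groups---is exactly what is needed to make sense of the statement.
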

This proposition implies that a permutation group arising as a
subdirect product is uniquely characterized by the associated groups
$G_1$, $G_2$, $N_1$ and $N_2$.

In addition, the possible synchronizations between two groups
are directly linked to their normal subgroups.
This observation combined with the classification of the groups of 
profile 1 (see Theorem~\ref{profile.one.classification}) and 
Remark~\ref{remark.profile_one_fi_subgroups},
lead to the following remark.
\begin{remark}
  \label{remark.profile_one_synchros}
  \label{remark.synchro_between_primitive_actions_on_blocks}
  Consider the action of a group $G$ on two infinite primitive orbits
  of points or of finite blocks. There is either:
  \begin{compactenum}
    \item no synchronization,
    \item total synchronization,
    \item a synchronized reflection in the cases of $\mathrm{Rev}(\mathbb{Q})$
    and $\mathrm{Rev}(\mathbb{Q}/\mathbb{Z})$ (synchronization \emph{of order 2}).
  \end{compactenum}
  On non trivial finite blocks, only the first two situations can
  occur by Lemma~\ref{lemma.sym_on_finite_blocks}.
\end{remark}

\section{The nested block system}
\label{section.canonical}
Let $G$ be a $P$-oligomorphic permutation group. In this section, we
go back to the notion of block system and take a closer look at how
we can exploit it for our purposes. 
First, we show that each block system provides a lower bound on the growth of the
profile. Seeking to maximize this lower bound, we establish the nature of finite
lattices of the posets of block systems, and use it to derive a construction of a 
special ``block system'' (for an extended version of the notion) satisfying 
appropriate properties. The later sections will show that this so-called
nested block system minimizes synchronization and provides a tight lower bound.

\subsection{How block systems provide a lower bound on the profile}

We first consider the case where the block system is
\textbf{transitive}, that is, $G$ acts transitively on its
blocks. In this case, all the blocks are conjugated and thus share the
same cardinality.

\begin{lemma}
  \label{lemma.subalgebra_two_cases}
  Let $G$ be a $P$-oligomorphic permutation group,
  endowed with a transitive block system $\mathcal{B}$. Then,
  \begin{compactenum}
  \item Case 1: $\mathcal{B}$ has finitely many infinite blocks, as in
    Example~\ref{example.wreath_products} (1) and (2). Then $G$ is a
    subgroup of $\sg_{\infty} \wreath \sg_k$ (where $k$ is the number of
    blocks), and $\orbitalgebra{G}$ contains $\sym_k$ which is a free
    algebra with generators of degrees $(1,\dots,k)$.
  \item Case 2: $\mathcal{B}$ has infinitely many finite blocks, as in
    Example~\ref{example.wreath_products} (3). Then, $G$ is a subgroup
    of $G_{|B} \wreath \sg_{\infty}$, and $\orbitalgebra{G}$ contains
    the free algebra with generators of degrees given by that of the non trivial
    orbits of $G_{|B}$.
  \end{compactenum}
\end{lemma}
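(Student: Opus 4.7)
The plan is to first check which transitive block systems are possible under $P$-oligomorphy, then exhibit in each case the wreath product into which $G$ embeds, and finally deduce the claim on $\orbitalgebra{G}$ using the inclusion from Lemma~\ref{lemma.subalgebras}~(2) together with the orbit algebra computations in Example~\ref{example.wreath_products}.

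Since $\mathcal{B}$ is transitive, all blocks share a common cardinality, and as $E$ is denumerable there are only three a priori possibilities: finitely many infinite blocks (Case~1), infinitely many finite blocks (Case~2), or infinitely many infinite blocks. To rule out the last, I would argue directly that the profile dominates the integer partition function. Given such a block system, for each integer partition $\lambda = (\lambda_1 \geq \dots \geq \lambda_\ell)$ of $n$, I pick $\ell$ distinct blocks and select $\lambda_i$ elements from the $i$-th one, producing an $n$-subset $A_\lambda$. Since $G$ permutes $\mathcal{B}$, the multiset $(|A \cap B|)_{B \in \mathcal{B}}$ is a $G$-invariant of any finite subset $A$, so distinct partitions yield distinct $G$-orbits. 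Thus $\profile{G}(n) \geq p(n)$, which is superpolynomial, contradicting $P$-oligomorphy.

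For Case~1, the standard decomposition of a group preserving a block system with $k$ blocks (via a choice of coset representatives sending a fixed block to each of the others) embeds $G$ into $\sym(B) \wreath \sym(\mathcal{B})$, which here is $\sg_\infty \wreath \sg_k$. Lemma~\ref{lemma.subalgebras}~(2) then gives $\orbitalgebra{\sg_\infty \wreath \sg_k} \subseteq \orbitalgebra{G}$, and by Example~\ref{example.wreath_products}~(1) the left-hand side is the algebra $\sym_k$ of symmetric polynomials in $k$ variables, which is free on generators of degrees $1, \dots, k$ (for instance the elementary symmetric polynomials).

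For Case~2, the same decomposition yields $G \leq G_{|B} \wreath \sg_\infty$, where $G_{|B}$ is the restriction of the setwise stabilizer of a block $B$ to $B$. Lemma~\ref{lemma.subalgebras}~(2) together with Example~\ref{example.wreath_products}~(3) then identifies a subalgebra of $\orbitalgebra{G}$ with the free commutative algebra generated by $\age{G_{|B}}^+$, with generators of the claimed degrees. The only substantive step is the exclusion of infinitely many infinite blocks; everything else is bookkeeping built on previously established facts.
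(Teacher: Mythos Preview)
Your proposal is correct and follows essentially the same approach as the paper's sketch: rule out the case of infinitely many infinite blocks via the partition function lower bound, then in each remaining case exhibit the wreath-product overgroup and invoke Lemma~\ref{lemma.subalgebras}~(2) together with Example~\ref{example.wreath_products}. The only cosmetic difference is that the paper phrases the exclusion step as ``$G$ would otherwise be a subgroup of $\sg_\infty\wreath\sg_\infty$, whose profile is the partition function,'' whereas you unpack this directly via the $G$-invariant multiset $(|A\cap B|)_{B\in\mathcal{B}}$; these are the same argument.
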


Note that the first case can be refined by stating that the orbit
algebra contains the algebra of invariants of the finite group $H$
acting on the blocks (which may be smaller than the full symmetric
group $\sg_k$); this algebra is typically not free.

\input{\figures figure_cas_essentiels}

\begin{proof}[Sketch of proof]
  The blocks share the same size by transitivity, and if their size (resp.
  number) is infinite, then their number (resp. size) has to be finite in order
  to keep the group $P$-oligomorphic (indeed, $G$ is otherwise a subgroup of
  $\sg_\infty\wreath \sg_{\infty}$ and its profile is bounded below by the 
  number of integer partitions).
  Use Lemma~\ref{lemma.subalgebras} and
  Examples~\ref{example.wreath_products} in each case.
\end{proof}

Assume that $G$ is endowed with a block system. Then, the proof of the
above lemma applies in the same fashion to the restrictions of $G$ to (the support of)
its orbits of blocks, leading the whole $\orbitalgebra{G}$ (with just one more use of 
Lemma~\ref{lemma.subalgebras}) to also contain the mentioned subalgebras (in the case
with finitely many finite blocks, refer instead to the first item of 
Example~\ref{example.basic_orbit_algebras}).
Recall also that in Case 2, we have the convenient property of 
Lemma~\ref{lemma.sym_on_finite_blocks}.

\begin{remark} \label{remark.independent_parts}
  Let $G$ be an oligomorphic permutation group, and $E_1,\ldots,E_k$ be
  a partition of $E$ such that each $E_i$ is stable under $G$.
  In our use case, we have a block system $\mathcal{B}$, and each $E_i$ is the
  support of one of the orbits of blocks in $\mathcal{B}$.

  Then, $G$ is a subgroup of
  $\restrict{G}{E_1}\times\cdots\times\restrict{G}{E_k}$ (precisely, it is a 
  \emph{subdirect product} of this direct product). Therefore,
  by Lemma~\ref{lemma.subalgebras}, $\orbitalgebra{G}$ contains
  $\orbitalgebra{\restrict{G}{E_1}}\otimes\cdots\otimes\orbitalgebra{\restrict{G}{E_k}}$
  as a subalgebra.
  In particular, the algebraic dimension of $\orbitalgebra{G}$ is
  bounded below by the sum of the algebraic dimensions of the
  $\orbitalgebra{\restrict{G}{E_i}}$.
  
  When, in addition, the actions of $G$ on each $E_i$ are completely
  independent, the containments above are equalities; then,
  $\orbitalgebra{G}$ is finitely generated if and only if each
  $\orbitalgebra{\restrict{G}{E_i}}$ is.
\end{remark}

\begin{remark}
\label{remark.lower_bound}
Combining Lemma~\ref{lemma.subalgebra_two_cases} and
Remark~\ref{remark.independent_parts}, each block system of $G$
provides a lower bound on the algebraic dimension of
$\orbitalgebra{G}$, and therefore on the growth rate of the profile.
\end{remark}

The following example illustrates that the lower bound on the profile
highly depends on the chosen block system.

\begin{example}
\label{example.treillis}
  Let $G=(\sg_2 \times \sg_2) \wreath \sg_\infty$. Following is the
  poset of all its block systems, ordered by refinement:

  \vspace{.5cm}
  \hspace*{-3cm}
  \begin{tikzcd}[column sep = tiny]
    & \trivialfull{2}{2} \ar[dl, dash] \ar[dr, dash] & \\
    \canonical{2}{2} & & \twoblocks{2}{2} & \\
    & \twoorbitsofblocks{2}{2} \ar[ul, dash] \ar[ur, dash] & \\
    & \trivialfinest{2}{2} \ar[u, dash] &
  \end{tikzcd}
  \vspace{.3cm}

  The picture below displays the lower bounds on the algebraic dimension
  that can be deduced respectively from each of these block systems,
  using Lemma~\ref{lemma.subalgebra_two_cases} and
  Remark~\ref{remark.independent_parts}:
  \begin{center}
    \begin{tikzcd}[column sep = tiny, row sep = tiny]
      & 1 \ar[dl, dash] \ar[dr, dash] & \\
      {\color{red}7} & & 2 & \\
      & 4 \ar[ul, dash] \ar[ur, dash] & \\
      & 1 \ar[u, dash] &
    \end{tikzcd}
  \end{center}
  For instance, for the block system with two orbits of blocks of size
  $2$, the lower bound on the algebraic dimension is $4=2+2$ since we have
  $G_{|B}=\sg_2$ in each orbit; for the block system with finite blocks
  of size $4$, the lower bound is $7$, for
  $G_{|B}=\sg_2 \times \sg_2$ has this many orbits of non empty
  subsets. This latter lower bound is obviously tight since the
  inclusion $G_{|B}\wreath \sg_\infty\subset G$ is an equality: the
  algebraic dimension of $\orbitalgebra G$ is $7$ and the growth rate
  of the profile of $G$ is $6$.
\end{example}

This example suggests that better lower bounds are obtained when
maximizing the size of the finite blocks (and then maximizing the
number of infinite blocks; consider also the example
$\sg_\infty \wreath \Id_n$ for that). 

Nevertheless, the bound provided by this heuristic alone can be improved 
at rather low cost, as advertised by the following example.

\begin{example}[Towards blocks of blocks]
\label{example.nested_system}
Consider the permutation group
$G=\mathcal{C}_4 \wreath (\sg_\infty \wreath \mathcal{C}_3)$; 
we use here the parentheses regardless of the associativity to emphasize 
the action of $G$ on its natural system of infinitely many 
(maximal) blocks of size $4$.

By Remark~\ref{remark.lower_bound}, this block system provides a lower 
bound of $4$ on the algebraic dimension.
As we will see, it is very crude; a lot of
information was lost when embedding the action on the blocks
$\sg_\infty \wreath \mathcal{C}_3$ into $\sg_\infty$. This action
was not even primitive to begin with: one can form $3$ infinite blocks
(of $4$-blocks). Let us exploit that information.
Consider the stabilizer $S$ of the three infinite blocks of finite blocks.
This is a normal subgroup of finite index of $G$, and therefore
it has the same algebraic dimension using Lemma~\ref{lemma.subgroup.rate}.
But now that these infinite blocks of blocks are stable parts of the domain, 
their contributions to the algebraic dimension can be treated separately;
which hands a bound of $3*4 = 12$ for $S$, and thus for $G$.

\end{example}

Let us step toward a generalization and a formalization of the 
phenomenon observed in the above example.

Let $G$ be a $P$-oligomorphic permutation group. Take a block system
$\mathcal{B}^{<\infty}$ of finite blocks only. Assume further that
these finite blocks are maximal: $G$ does not have any strictly
coarser system of finite blocks. This choice is motivated by the
earlier observations (see Example~\ref{example.treillis});
we will see in Subsection~\ref{subsection.lattices} that
$\mathcal{B}^{<\infty}$ always exists and is unique.

If $G$ has some finite orbits of elements, their union forms a stable
finite block which contains all other stable finite blocks (a stable
block is a union of orbits, that are obviously finite if the block
is).

By definition of a block system, $G$ acts on the set of blocks of 
$\mathcal{B}^{<\infty}$. Furthermore, this induced action
does not admit any non trivial finite block, for else 
the blocks of $\mathcal{B}^{<\infty}$ would not be maximal. It has no
special reason to be primitive though: its block systems will just
have infinite blocks only (plus possibly one singleton, if $\mathcal{B}^{<\infty}$
has a stable block) -- and finitely
many of them, for the same reasons as in Lemma~\ref{lemma.subalgebra_two_cases}.
By choosing one such system, we end up with two nested block systems: 
an inner one with finite blocks, and an outer one with (finitely many)
infinite blocks; 
in other words, \emph{a finite system of infinite blocks of finite blocks}.

\begin{remark}
\label{remark.lower_bound_from_nested}
A lower bound can be obtained from such a double, nested block system by first
stabilizing the infinite blocks of finite blocks (which does not change
the growth of the group, as stated by Lemma~\ref{lemma.subgroup.rate}),
and then applying the same method as in Remark~\ref{remark.lower_bound}.
For the same choice of (maximal) finite blocks, 
the lower bound $\bound$ provided by this method is better than the one 
deduced from the matching simple block system \emph{via} Remark~\ref{remark.lower_bound}.
\end{remark}

This is pretty much obvious if you consider the typical case highlighted
in Example~\ref{example.nested_system}.

As for the choice of the infinite blocks of blocks, the general intuition
remains the same as with classical, simple block systems: we feel that the
more the better, as far as the lower bound is concerned.

The next subsection formalizes these intuitions to construct a canonical
block system (in fact a system of blocks of blocks) that will hopefully
maximize the lower bound.

\subsection{Optimizing the lower bound through lattice structures}
\label{subsection.lattices}

As suggested by the previous subsection, maximizing the lower bound will
involve maximizing or minimizing block systems with certain
properties. To this end, we will exploit the lattice structure of the
poset of block systems, which we recall now.
\begin{proposition}
  \label{proposition.lattice_block_systems}
  Let $G$ be a permutation group $G$ acting on a set $E$, finite or
  infinite. The poset $\blocklattice{G}$ of all its block systems,
  endowed with the refinement order, is a sublattice of the lattice of
  set partitions of $E$. Its maximum and minimum are respectively the
  trivial block systems $\top=\{E\}$ and $\bot=\{\{e\}\suchthat e\in E\}$.
\end{proposition}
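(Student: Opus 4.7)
The plan is to verify directly that the three ingredients listed in the statement hold: that the two trivial partitions are block systems, and that the meet and join of block systems in the ambient partition lattice of $E$ remain block systems. Since a block system is nothing more than a $G$-invariant partition of $E$, the entire proof reduces to checking that $G$-invariance is preserved by the lattice operations on partitions.

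First I would dispose of the extremal elements. The partition $\top = \{E\}$ is preserved by every bijection of $E$, hence by $G$; similarly, $\bot = \{\{e\} \suchthat e \in E\}$ is preserved since any permutation sends singletons to singletons. These are clearly the top and bottom of $\blocklattice{G}$ under refinement because they are the top and bottom of the full partition lattice of $E$.

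Next I would handle the meet. Recall that in the partition lattice, the meet $\mathcal{B}_1 \wedge \mathcal{B}_2$ has for parts the non-empty intersections $B_1 \cap B_2$ with $B_i \in \mathcal{B}_i$. For any $g \in G$ and such an intersection part, one has $g(B_1 \cap B_2) = g(B_1) \cap g(B_2)$; since each $g(B_i)$ is again a block of $\mathcal{B}_i$, the image is again a part of the meet. Thus $\mathcal{B}_1 \wedge \mathcal{B}_2$ is $G$-invariant.

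Finally, for the join $\mathcal{B}_1 \vee \mathcal{B}_2$, recall that its equivalence relation is the transitive closure of the union of the equivalence relations defining $\mathcal{B}_1$ and $\mathcal{B}_2$: two points $x,y$ are equivalent iff there is a finite chain $x = z_0, z_1, \ldots, z_n = y$ in which each consecutive pair $z_i, z_{i+1}$ lies in a common block of $\mathcal{B}_1$ or of $\mathcal{B}_2$. For $g \in G$, applying $g$ termwise to such a chain produces a chain $g(z_0), \ldots, g(z_n)$ of the same form, because each pair $\{g(z_i), g(z_{i+1})\}$ lies inside $g(B)$ for some block $B$ of $\mathcal{B}_1$ or $\mathcal{B}_2$, and $g(B)$ is itself a block of the respective system. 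Hence $g(x)$ and $g(y)$ remain equivalent in $\mathcal{B}_1 \vee \mathcal{B}_2$, so this join is $G$-invariant. There is no real obstacle here; the statement is a direct consequence of the fact that the property ``preserved by $G$'' is closed under pointwise intersection and under transitive closure of unions of equivalence relations.
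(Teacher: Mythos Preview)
Your proof is correct and follows essentially the same approach as the paper: both verify that the meet (via $g(B_1\cap B_2)=g(B_1)\cap g(B_2)$) and the join (via applying $g$ termwise to a connecting chain in the transitive closure) of two $G$-invariant partitions remain $G$-invariant. You additionally spell out the triviality of the extremal elements, which the paper leaves implicit.
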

\begin{proof}
  Take two block systems $\blocksystem$ and $\blocksystem'$, and
  consider their meet in the lattice of set partitions, namely the set
  partition:
  \begin{displaymath}
    \blocksystem\wedge \blocksystem' = \{ B\cap B' \suchthat B\in \blocksystem \text{ and } B'\in \blocksystem' \}\,.
  \end{displaymath}
  It is straightforward to check that this is still a block system for
  the group. Hence this is the meet of $\blocksystem$ and
  $\blocksystem'$ in $\blocklattice{G}$.

  Similarly, consider the join $\blocksystem\vee \blocksystem'$ in the
  lattice of set partitions.
  It is obtained by taking the equivalence classes of the
  closure of the relation ``being in the
  same block in $\blocksystem$ or in $\blocksystem'$''. There remains
  to check that $\blocksystem\vee \blocksystem'$ is a block system: if
  $x$ and $y$ are in the same part and $\sigma$ is an element of $G$, then $\sigma(x)$
  and $\sigma(y)$ are in the same part as well. To this end, consider a
  sequence $x_0,\ldots,x_k$ such that we have $x_0=x$, $x_k=y$ and any two
  consecutive elements in the same block for either $\blocksystem$
  or $\blocksystem'$; then the same holds for the sequence
  $\sigma(x_0),\ldots,\sigma(x_k)$.

  In conclusion, $\blocklattice{G}$ is stable under both join and
  meet operations, and therefore a sublattice of the lattice of set 
  partitions of $E$.
\end{proof}

In the sequel, we will consider block systems with only finite blocks
(resp. only infinite blocks, up to kernel); the following propositions
state that those block systems form finite sublattices. This will
provide us with a canonical maximal (resp. minimal) block system from
which we will derive bounds.

\begin{proposition}
  \label{proposition.lattice_finiteblock_systems}
  Let $G$ be an oligomorphic permutation group, and
  $\finiteblocklattice{G}$ be the subposet of block systems consisting
  of finite blocks only. Then, $\finiteblocklattice{G}$ is a sublattice
  of $\blocklattice{G}$, with the trivial block system as minimum. If,
  in addition, $G$ is $P$-oligomorphic, then $\finiteblocklattice{G}$
  is finite, with a maximum $\finiteblocksystem$.
\end{proposition}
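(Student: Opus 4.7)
The plan is to separate the sublattice property (valid under just oligomorphy) from the finiteness and the existence of a maximum (both of which use the $P$-oligomorphic hypothesis). For the meet in $\blocklattice{G}$, given $\blocksystem, \blocksystem' \in \finiteblocklattice{G}$, each block of $\blocksystem \wedge \blocksystem'$ is an intersection $B \cap B'$ of finite blocks, hence finite. The singleton partition $\bot$ has only singletons as blocks and is already the minimum of $\blocklattice{G}$; it is thus the minimum of $\finiteblocklattice{G}$. For the join, I would work orbit by orbit: restricting to a single $G$-orbit $\mathcal{O}$ (on which $G$ acts transitively) and invoking the classical correspondence between blocks containing a fixed $x \in \mathcal{O}$ and subgroups $H$ with $G_x \leq H \leq G$, where $B = H\cdot x$ and finite blocks correspond to finite-index subgroups over $G_x$. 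The join in the block lattice at $x$ corresponds to the subgroup $\langle H_1, H_2 \rangle$; in the oligomorphic setting, this generated subgroup remains of finite index over $G_x$, so the join-block at $x$ stays finite. An orbit-by-orbit extension gives the join on all of $E$.

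For the finiteness under $P$-oligomorphy, I would first bound the block sizes. Let $d$ denote the growth rate of $\profile{G}$ and take a block $B$ of size $k$ in some $\blocksystem \in \finiteblocklattice{G}$. If the $G$-orbit of $B$ is finite, then $B$ sits inside the union of this orbit, a finite $G$-stable subset, whose size is bounded by the total size of the (finitely many) finite $G$-orbits of elements, hence by a constant depending only on $G$. If the orbit of $B$ is infinite, I would apply Lemma~\ref{lemma.subalgebra_two_cases}(2) to the restriction of $G$ to the support of this orbit: $\orbitalgebra{G}$ contains a free subalgebra generated by the non-trivial $G_{|B}$-orbits of subsets of $B$, of which there is at least one per degree $1, \dots, k$. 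The Hilbert series of a free algebra on at least $k$ generators (with degrees among $1, \dots, k$) grows polynomially of degree at least $k-1$; since $G$ is $P$-oligomorphic of growth $d$, this forces $k \leq d+1$.

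With block sizes bounded by some $M = M(G)$, a counting argument concludes the proof: by oligomorphy, there are only finitely many $G$-orbits of subsets of size $\leq M$, so only finitely many candidate orbits of blocks; each $\blocksystem \in \finiteblocklattice{G}$ is a sub-collection of these orbits partitioning $E$, giving finitely many possibilities. Being a finite sublattice, $\finiteblocklattice{G}$ has a maximum $\finiteblocksystem$, namely the join of all its elements. The main obstacle, already visible in the first paragraph, is the join-preservation step: it rests on the group-theoretic claim that the subgroup generated by two finite-index subgroups over $G_x$ remains of finite index in the oligomorphic setting, a subtle property relying on the strong finiteness constraints oligomorphy imposes on the lattice of subgroups above $G_x$.
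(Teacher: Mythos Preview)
Your meet-closure and minimum arguments are correct and match the paper. The real gap lies in the join-closure step. You reformulate it group-theoretically (in the transitive case: if $G_x \le H_1, H_2$ with $[H_i:G_x]<\infty$, then $[\langle H_1,H_2\rangle : G_x]<\infty$), correctly flag it as the main obstacle, and then leave it unproven, saying only that it ``relies on the strong finiteness constraints oligomorphy imposes.'' But this is precisely the nontrivial content of the paper's Lemma~\ref{lemma.finiteblocklattice_stable_by_join}, whose proof is not group-theoretic at all: assuming the join has an infinite block, one fixes a non-singleton block $B_0\in\blocksystem$, stabilises it, and exhibits an infinite nested sequence of disjoint finite stable domains (alternately built from $\blocksystem$- and $\blocksystem'$-blocks), from which one manufactures infinitely many $G$-orbits of $3$-subsets, contradicting oligomorphy. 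Your reformulation is equivalent to the lemma in the transitive case but is not a proof of it. There is also a secondary issue: your orbit-by-orbit reduction is unsafe, since finite blocks may straddle several $G$-orbits of elements (e.g.\ $\sg_\infty$ acting diagonally on two copies of $\NN$, with blocks $\{(1,n),(2,n)\}$), and the join in $\blocklattice{G}$ does not commute with restriction to an element-orbit.

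Your finiteness argument under $P$-oligomorphy, on the other hand, is correct and genuinely different from the paper's. The paper argues via Lemma~\ref{lemma.increasing_on_lattices}: the lower bound $\bound$ increases strictly along covers in $\finiteblocklattice{G}$, so chains are bounded; combined with local finiteness this yields a maximum and finiteness of $\finiteblocklattice{G}=[\bot,\finiteblocksystem]$. Your route---bound block sizes via the free subalgebra of Lemma~\ref{lemma.subalgebra_two_cases}, then count block systems with bounded block size---is more direct and avoids the chain machinery. One can in fact go further: oligomorphy alone already gives finitely many $G$-orbits on $E\times E$, hence finitely many $G$-invariant equivalence relations, so $\blocklattice{G}$ itself is finite without invoking $P$-oligomorphy. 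The existence of a maximum in $\finiteblocklattice{G}$, however, still hinges on closure under join, which brings you back to the gap above.
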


\begin{proposition}
  \label{proposition.lattice_infiniteblock_systems}
  Let $G$ be a $P$-oligomorphic permutation group, and
  $\infiniteblocklattice{G}$ be the subposet of block systems
  consisting of infinite blocks only; if the kernel of $G$ is non
  trivial, then finite blocks contained in the kernel are allowed as
  well. Then, $\infiniteblocklattice{G}$ is a finite sublattice of
  $\blocklattice{G}$, with a minimum and the trivial block system 
  as maximum.
\end{proposition}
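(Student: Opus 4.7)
The plan is to mirror the proof of Proposition~\ref{proposition.lattice_finiteblock_systems}: first establish closure of $\infiniteblocklattice{G}$ under the meet and join inherited from $\blocklattice{G}$, then derive finiteness, and finally conclude the existence of a minimum from that of the maximum $\{E\}$ together with finiteness. The maximum is immediate since $\{E\}$ is a single infinite block.

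Closure under join is routine: a block $C$ of $\blocksystem\vee\blocksystem'$ contains some block $B$ of $\blocksystem$ or of $\blocksystem'$; if this $B$ is infinite, so is $C$, while otherwise $C$ is a union of kernel-finite blocks and remains within the kernel. The meet-closure is the crux. Given $\blocksystem,\blocksystem'\in\infiniteblocklattice{G}$, restrict to any infinite $G$-orbit $O$. By Lemma~\ref{lemma.subalgebra_two_cases} (Case~1) applied to the transitive $P$-oligomorphic group $G|_O$, both $\blocksystem|_O$ and $\blocksystem'|_O$ consist of finitely many infinite blocks, say $k$ and $k'$ of them respectively. Their common refinement $(\blocksystem\wedge\blocksystem')|_O$ therefore has at most $kk'$ blocks, all conjugate under $G|_O$ and hence of equal cardinality; since they partition the infinite set $O$ into finitely many equinumerous parts, each such block must be infinite. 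On the kernel—a finite set, as the oligomorphic group $G$ has only finitely many finite orbits—intersections of kernel blocks stay within the kernel. Hence $\blocksystem\wedge\blocksystem'\in\infiniteblocklattice{G}$.

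For finiteness, argue orbit by orbit. On each infinite orbit $O$, an element of $\infiniteblocklattice{G|_O}$ amounts to a subgroup $H\leq G|_O$ of finite index $k$ (bounded by $P$-oligomorphy) containing a point stabilizer $G_x$ and such that the block $H.x$ is infinite; there are only finitely many such intermediate subgroups, yielding only finitely many block systems per orbit. The kernel contributes only finitely many systems as it is finite. Combining across the finitely many orbits gives $|\infiniteblocklattice{G}|<\infty$. With closure under meet and the maximum $\{E\}$ in hand, the meet of all elements of $\infiniteblocklattice{G}$ produces the desired minimum.

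The main obstacle is the meet-closure argument: \emph{a priori}, intersecting two infinite blocks could yield a finite non-kernel block, and what rules this out is precisely the combination of transitivity of $G|_O$ on each infinite orbit with the $P$-oligomorphic bound from Lemma~\ref{lemma.subalgebra_two_cases}. A secondary technical point is the finiteness of $\infiniteblocklattice{G|_O}$ on a single orbit, which requires bounding the number of finite-index intermediate subgroups producing infinite blocks; this can be treated directly for the cases at hand, or referred to the structural results established in the later sections of the paper.
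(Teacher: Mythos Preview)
Your meet-closure argument works, but takes a longer path than the paper. The paper simply observes that $\blocksystem\wedge\blocksystem'$ has finitely many blocks (at most $|\blocksystem|\cdot|\blocksystem'|$, since each of $\blocksystem,\blocksystem'$ does), so the union of its finite blocks is itself finite and $G$-stable, hence contained in the kernel; no orbit-by-orbit analysis is needed.

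The real issue is your finiteness argument. You assert that on each infinite orbit $O$ there are only finitely many finite-index subgroups $H$ with $G_x\le H\le G|_O$, but you do not prove this, and neither of your suggested fixes works. A $P$-oligomorphic bound on the index $k$ does not by itself bound the number of index-$k$ subgroups containing a point stabilizer, so ``treating it directly'' is no easier than the original claim; and appealing to ``the structural results established in the later sections'' is circular, since the nested block system of Section~\ref{subsection.nested}---on which those results rest---requires precisely the minimum whose existence you are trying to establish here. You also gloss over the passage from single orbits to all of $E$: the map $\blocksystem\mapsto(\blocksystem|_O)_O$ is not injective, so bounding each factor does not immediately bound $|\infiniteblocklattice{G}|$ (the fibers are finite, but this needs to be said).

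The paper circumvents all of this with the same template as for $\finiteblocklattice{G}$: each $\blocksystem\in\infiniteblocklattice{G}$ has finitely many blocks, so every interval $[\blocksystem,\top]$ is finite (only finitely many coarsenings of a finite partition); by Lemma~\ref{lemma.increasing_on_lattices} the lower bound $\bound$ is strictly monotone along covers, so there are no infinite chains; hence a minimum $\mathcal{B}^\infty$ exists and $\infiniteblocklattice{G}=[\mathcal{B}^\infty,\top]$ is finite. The chain bound via Lemma~\ref{lemma.increasing_on_lattices} is the ingredient your argument is missing.
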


Proving Propositions~\ref{proposition.lattice_finiteblock_systems}
and~\ref{proposition.lattice_infiniteblock_systems} will require a
couple of lemmas.

\begin{lemma}
  \label{lemma.increasing_on_lattices}
  Let $G$ be a $P$-oligomorphic group, and $\bound$ be
  the function that maps a block system onto the associated lower
  bound described in the previous subsection (Remark~\ref{remark.lower_bound}). 
  Let $\blocksystem < \blocksystem'$ be a cover (not involving the kernel)
  in the lattice
  $\finiteblocklattice{G}$, then we have
  $\bound(\blocksystem) < \bound(\blocksystem')$.

  If instead $\blocksystem < \blocksystem'$ is a cover (not involving the kernel)
  in the lattice $\infiniteblocklattice{G}$, we have
  $\bound(\blocksystem) > \bound(\blocksystem')$.
\end{lemma}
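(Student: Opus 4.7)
The plan is to compute $\bound(\blocksystem)$ as a sum of contributions $c(O)$ over the $G$-orbits $O$ of blocks of $\blocksystem$, where by Lemma~\ref{lemma.subalgebra_two_cases} and Remark~\ref{remark.lower_bound} we have $c(O) = |O|$ for an orbit of infinite blocks (Case 1) and $c(O) = |\age{G_{|B}}^+|$ for an orbit of finite blocks with representative $B$ (Case 2). Given a cover $\blocksystem < \blocksystem'$, I would work with the natural $G$-equivariant surjection $\pi\colon\blocksystem \to \blocksystem'$ sending each inner block to the outer block containing it; it induces a surjection on orbit sets, and for a fixed outer orbit $O'$ I write $O_1,\dots,O_j$ for the inner orbits with $\pi(O_l) = O'$. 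The lemma then reduces to comparing $c(O')$ with $\sum_l c(O_l)$ for each $O'$ and observing that the cover being proper forces strict inequality somewhere.

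In $\infiniteblocklattice{G}$ the function $c$ is just the cardinality of the orbit, so $\bound(\blocksystem)$ equals the total number of blocks of $\blocksystem$. A proper cover strictly reduces this number because some $B' \in \blocksystem'$ must strictly contain more than one inner block, yielding $\bound(\blocksystem) > \bound(\blocksystem')$. The ``not involving the kernel'' hypothesis simply lets me restrict attention to the genuinely infinite part of the structure where fusion actually occurs, so that the kernel contribution (identical on both sides) does not spuriously enter.

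The case of $\finiteblocklattice{G}$ is more delicate and constitutes the heart of the proof. Fix $B' \in O'$ and choose, for each $l$, an inner block representative $B_l^* \in O_l$ with $B_l^* \subseteq B'$. I would establish $c(O') \geq \sum_l c(O_l)$ by exhibiting an injection
\begin{displaymath}
\bigsqcup_{l=1}^j \age{G_{|B_l^*}}^+ \;\hookrightarrow\; \age{G_{|B'}}^+
\end{displaymath}
sending the $G_{|B_l^*}$-orbit of a non-empty $s \subseteq B_l^*$ to its $G_{|B'}$-orbit in $B'$. Injectivity rests on two observations: (i) the setwise stabilizer of $B_l^*$ inside $G_{|B'}$ coincides with $\stabilizer_G(B_l^*)$, because any $\sigma \in G$ fixing $B_l^*$ setwise must also fix the unique outer block containing $B_l^*$ (namely $B'$) by disjointness of $\blocksystem'$-blocks, so distinct $G_{|B_l^*}$-orbits cannot fuse when lifted to $B'$; (ii) when $O_l \neq O_{l'}$, no $\sigma \in G_{|B'}$ can carry a non-empty $s \subseteq B_l^*$ into $B_{l'}^*$, for $\sigma(B_l^*)$ would then lie in $O_l$ yet non-trivially intersect $B_{l'}^* \in O_{l'}$, contradicting the disjointness of blocks in distinct $G$-orbits.

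Strict inequality follows as soon as $B'$ splits into at least two inner blocks $B_1, B_2$: any two-element subset $\{x,y\}$ with $x \in B_1$ and $y \in B_2$ yields a $G_{|B'}$-orbit in $\age{G_{|B'}}^+$ outside the image of the injection, since none of its elements lies in a single inner block. Since the cover is proper at least one $B' \in \blocksystem'$ contains multiple inner blocks, so summing over $O'$ gives $\bound(\blocksystem) < \bound(\blocksystem')$. The main obstacle in executing this plan is observation (i), namely the stabilizer identity $\stabilizer_{G_{|B'}}(B_l^*) = \stabilizer_G(B_l^*)$; once this is in hand, everything else reduces to routine orbit bookkeeping.
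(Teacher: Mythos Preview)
Your proof is correct and follows essentially the same approach as the paper's. The only difference is presentational: the paper handles the finite-block case by a two-way case split according to whether the inner blocks $B_1,B_2\subseteq B'$ lie in the same $G$-orbit (``swap'') or not, whereas you give a single unified argument via the explicit injection $\bigsqcup_l \age{G_{|B_l^*}}^+ \hookrightarrow \age{G_{|B'}}^+$, which absorbs both cases at once. Your observation (i) is exactly the block-containment fact the paper uses implicitly, and your straddling two-element set is the same witness for strict inequality that the paper invokes (``additional orbits of subsets that have non empty intersections with both $B_1$ and $B_2$'').
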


\begin{proof}
Assume that $\blocksystem < \blocksystem'$ is a cover in $\finiteblocklattice{G}$.
Pick one of the finite blocks $B$ in $\blocksystem'$ that splits into 
two new blocks $B_1$ and $B_2$ in $\blocksystem$; by conjugation, the 
same can be said about all the other blocks in the orbit of $B$.
There are two cases: either $B_1$ and $B_2$ may swap or
they may not. 

If not, then the support $O_B$ of the orbit of $B$ is the union of the supports 
$O_1$ and $O_2$ of the orbits of $B_1$ and $B_2$ (resp.), and the age of $G_{|O_B}$ 
contains the (disjoint) ages of the restrictions $G_1$ and $G_2$ to $O_1$ and $O_2$ 
(resp.). 
It also contains the additional orbits of subsets that have non empty 
intersections with both $B_1$ and $B_2$, so the inclusion is strict. 
Using Lemma~\ref{lemma.subalgebra_two_cases},
the provided bound on the profile is strictly better with the coarser
system (since we took a cover, the situation
in $\blocksystem$ and $\blocksystem'$ is the same everywhere else).

If $B_1$ and $B_2$ do swap, then we get a single orbit of (small) blocks 
in $\blocksystem$, just as in $\blocksystem'$; except that if one denotes by $H$
the restriction of $G$ to one of the small blocks in $\blocksystem$, the
restriction to one block of $\blocksystem'$ is $H \wreath \sg_2$, which
has a strictly larger age. Hence, $\blocksystem'$ provides a better bound.

As for $\infiniteblocklattice{G}$, the result is rather obvious from 
Lemma~\ref{lemma.subalgebra_two_cases}.
\end{proof}

\begin{lemma}
  \label{lemma.finiteblocklattice_stable_by_join}
  Let $G$ be an oligomorphic permutation group. The poset
  $\finiteblocklattice{G}$ is closed under taking joins (as defined in
  the lattice of set partitions).
\end{lemma}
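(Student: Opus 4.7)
The plan is to show that for $\mathcal{B}, \mathcal{B}' \in \finiteblocklattice{G}$, every block of the join $\mathcal{C} = \mathcal{B} \vee \mathcal{B}'$ is finite; by Proposition~\ref{proposition.lattice_block_systems}, $\mathcal{C}$ is already a block system, so only the finiteness of its blocks remains to verify. Fix $C \in \mathcal{C}$ and a point $x \in C$, and exhaust $C$ by finite sets $D_0 \subseteq D_1 \subseteq \cdots$ defined inductively by $D_0 = \{x\}$ and $D_{k+1} = D_k \cup \bigcup\{B \in \mathcal{B} \cup \mathcal{B}' : B \cap D_k \neq \emptyset\}$. Each $D_k$ is a finite union of finite blocks, hence finite; and since the join is the transitive closure of the relation ``sharing a $\mathcal{B}$- or $\mathcal{B}'$-block'', one has $C = \bigcup_k D_k$. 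Everything reduces to showing that the sequence $D_k$ is uniformly bounded.

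The key input is that, for an oligomorphic group $G$, the pointwise stabilizer $G_x$ has only finitely many orbits on $E$: these orbits inject into the $G$-orbits of ordered pairs, whose total count is controlled by $\profile{G}(1)$ and $\profile{G}(2)$. A routine induction shows that each $D_k$ is $G_x$-invariant: any $g \in G_x$ permutes $\mathcal{B}$- and $\mathcal{B}'$-blocks and fixes setwise the ones containing $x$, so it preserves both $D_0$ and the property of meeting any $G_x$-invariant set. Being simultaneously finite and $G_x$-invariant, each $D_k$ is a union of \emph{finite} $G_x$-orbits only; letting $F$ denote the union of all finite $G_x$-orbits, $F$ is a finite subset of $E$ (finitely many orbits, each finite) containing every $D_k$. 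Hence $C \subseteq F$ is finite.

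The main conceptual step is spotting that the join construction, which a priori could propagate indefinitely along an infinite chain of overlapping blocks, is in fact trapped inside the union of finite $G_x$-orbits. Once this bridge between the combinatorics of the join and the orbit structure of the point stabilizer is in place, the $G_x$-invariance check and the bookkeeping of finite unions are straightforward, and no further obstacle is expected.
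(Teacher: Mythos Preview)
Your proof is correct and takes a genuinely different route from the paper's.

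The paper argues by contradiction: assuming the join has an infinite block, it locates a non-singleton block $B_0$ of $\mathcal{B}$ inside it, passes to the setwise stabilizer $S_0$ of $B_0$, and shows that the alternating ``crowns'' of $\mathcal{B}$- and $\mathcal{B}'$-blocks emanating from $B_0$ yield an infinite sequence of disjoint finite $S_0$-stable sets. Picking two points in $B_0$ together with one point in a varying crown then produces infinitely many $G$-orbits of $3$-subsets, contradicting oligomorphicity. The argument hinges on the availability of a non-singleton block (handled by a small preliminary case split) and on a somewhat delicate geometric picture.

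Your argument is more direct and more structural: you observe that the growing sets $D_k$ are simultaneously finite and $G_x$-invariant, and then invoke the standard fact that a point stabilizer in an oligomorphic group has only finitely many orbits on $E$; hence all the $D_k$ are trapped inside the finite union of the finite $G_x$-orbits. This avoids the contradiction setup, the singleton/non-singleton case split, and the explicit construction of distinguishing $3$-subsets. It also makes transparent exactly which consequence of oligomorphicity is being used (finiteness of orbits on ordered pairs), whereas the paper's proof uses finiteness of $\profile{G}(3)$ in a more ad hoc way. On the other hand, the paper's argument is self-contained at the level of counting subset orbits and does not appeal to the tuple-orbit reformulation of oligomorphicity.
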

The following simple example illustrates that this statement may fail
without the oligomorphic condition.
\begin{example}
  Recall that the (non oligomorphic) permutation group
  $\text{Aut}(\ZZ)$ is generated by the translation $x \mapsto x+1$,
  and take $G=\text{Aut}(\ZZ)\times\text{Aut}(\ZZ)$, acting on two
  copies of $\ZZ$: $E=\{1,2\} \times \ZZ$. This group admits an
  infinite family of block systems $(\blocksystem_j)_{j\in \ZZ}$ with
  non trivial finite blocks of size $2$:
  \begin{displaymath}
    \blocksystem_j := \{ \{(1,i), (2,i+j)\} \suchthat i\in\ZZ\}\,.
  \end{displaymath}
  The following picture illustrates the block systems $\blocksystem_0$
  and $\blocksystem_1$; their join is the trivial block system with a
  single infinite block.
  \newcommand{\intersectingblocks}[1]{ \draw[draw=red!50,
    very thick](#1-2,0) ellipse (2cm and 5cm);
    \begin{scope}[xshift=#1cm]
      \draw[draw=blue!100, thick, rotate=30](0,0) ellipse (1.7cm and 5.6cm);
    \end{scope}
  }
  \begin{center}
    \begin{tikzpicture}[scale=.2]
      \draw[draw=red!50, very thick](-2,0) ellipse (2cm and 5cm);
      \draw[draw=blue!100, thick, rotate=30](0,0) ellipse (1.7cm and 5.6cm);
      \node[circle, fill=gray!30, draw=black, scale=.7] (a) at (-2, 3.6){};
      \node[circle, fill=gray!30, draw=black, scale=.7] (a) at (-2, -3.6){};
      \draw[draw=red!50, very thick](2,0) ellipse (2cm and 5cm);
    \begin{scope}[xshift=4cm]
      \draw[draw=blue!100, thick, rotate=30](0,0) ellipse (1.7cm and 5.6cm);
      \node[circle, fill=gray!30, draw=black, scale=.7, opacity=.5] (a) at (-2, 3.6){};
      \node[circle, fill=gray!30, draw=black, scale=.7, opacity=.5] (a) at (-2, -3.6){};
      \end{scope}
      \draw[draw=red!50, very thick](6,0) ellipse (2cm and 5cm);
    \begin{scope}[xshift=8cm]
      \draw[draw=blue!100, thick, rotate=30](0,0) ellipse (1.7cm and 5.6cm);
      \node[circle, fill=gray!30, draw=black, scale=.7, opacity=.2] (a) at (-2, 3.6){};
      \node[circle, fill=gray!30, draw=black, scale=.7, opacity=.2] (a) at (-2, -3.6){};
      \end{scope}
      \intersectingblocks{12}
      \intersectingblocks{16}
      \intersectingblocks{20}
      \intersectingblocks{24}
      \intersectingblocks{28}
      \intersectingblocks{32}
      \intersectingblocks{36}
      \node (dots) at (46, 0){$\cdots$};
      \node (dots) at (58, 0){{\color{red!70}$\blocksystem_0$} and {\color{blue}$\blocksystem_1$}};
    \end{tikzpicture}
  \end{center}
  In general, the join of two of block systems $\blocksystem_i$ and 
  $\blocksystem_j$ with $i\ne j$ is composed of infinite blocks.
\end{example}
\begin{proof}[Proof of Lemma~\ref{lemma.finiteblocklattice_stable_by_join}]
Assume that the join of two systems of finite blocks $\blocksystem$
and $\blocksystem'$ from $\finiteblocklattice{G}$ contains at least one
infinite block. This block is thus a union of infinitely many blocks
from both $\blocksystem$ and $\blocksystem'$, in which every block from
one system intersects at least one block from the other one.
If all of the blocks of $\blocksystem$ involved were
singletons, each of them would be included in one block from $\blocksystem'$
and so the join would not have an infinite block; hence at least one
of them, call it $B_0$, is not.

Consider the stabilizer $S_0$ of this $B_0$ (in red in the center of 
Figure~\ref{figure.proof_stable_by_join}). 
In this subgroup, the union of the blocks from $\blocksystem'$ having a non
empty intersection with $B_0$ (in blue) is also stable, so as well is their
set difference with $B_0$. One can iterate the argument with the union
of blocks from $\blocksystem$ intersecting this stable domain (the outer crown
of red blocks), and so on. 

\begin{figure}[h]
\center
\includegraphics[scale=.3]{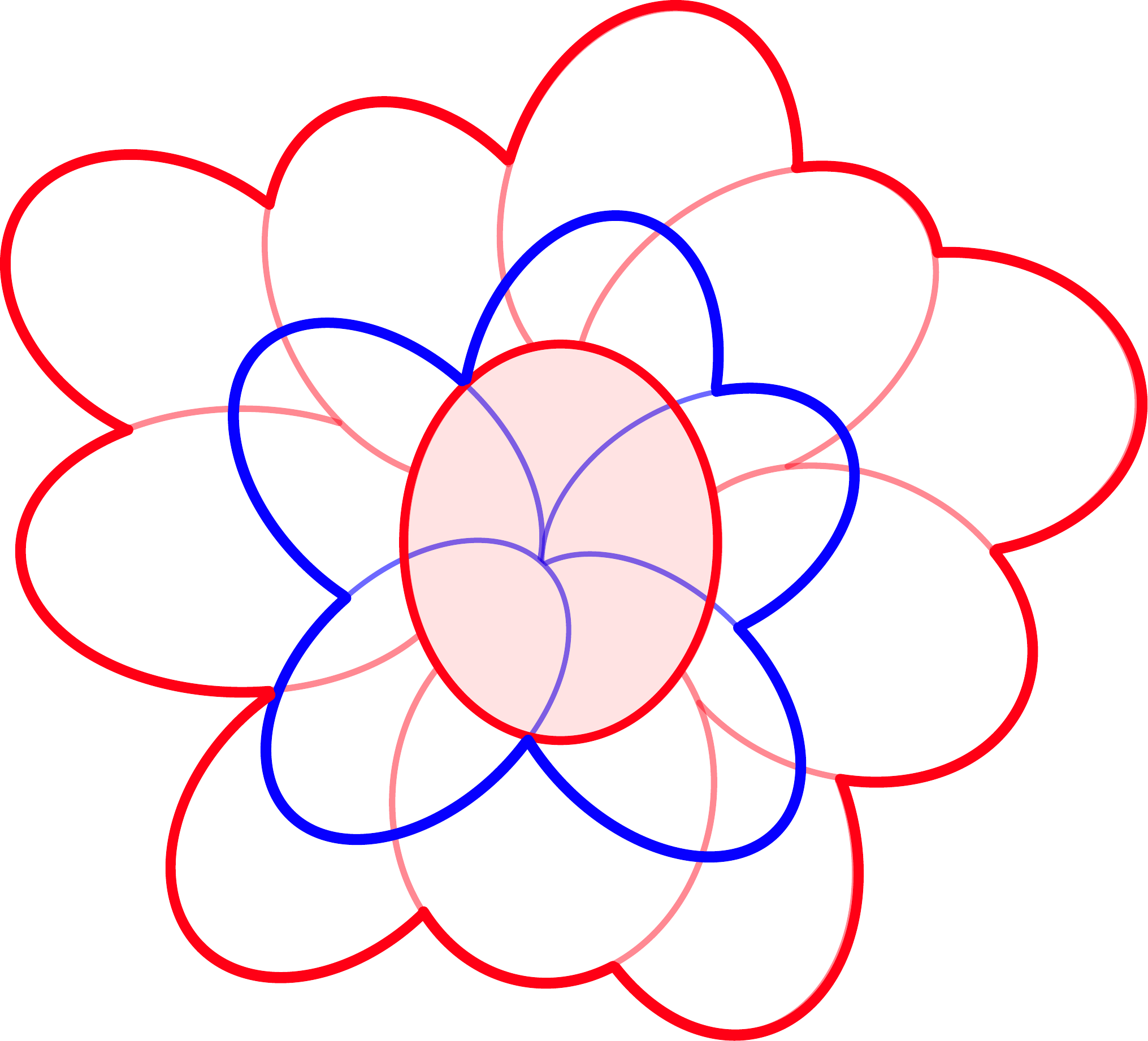}
\caption{Nested stable areas arising when stabilizing one block of $\blocksystem$}
\label{figure.proof_stable_by_join}
\end{figure}

This reveals an infinite sequence of finite disjoint (by taking 
the set difference every time) domains that are stable under the action
of $S_0$, and of which the first item is $B_0$. 
Take now two distinct subsets $A_1$ and $A_2$ of $E$, each of them 
consisting of two elements in 
$B_0$ and just one in any of the other $S_0$-stable domains.
An element of $G$ mapping $A_1$ to $A_2$, if there is any,
necessarily belongs to $S_0$, since the pair included in 
$B_0$ has no other choice but to be mapped onto the corresponding pair 
of $A_2$. Therefore, changing the $S_0$-stable domain in which we take
the singleton for $A_2$ (or $A_1$) exhibit infinitely many non isomorphic
subsets of size $3$ for $G$, which is to say infinitely many orbits of
degree $3$, and makes $G$ a non oligomorphic group.
\end{proof}

\begin{proof}[Proof of Proposition~\ref{proposition.lattice_finiteblock_systems}]
  Thanks to Lemma~\ref{lemma.finiteblocklattice_stable_by_join}, we already
  know that $\finiteblocklattice{G}$ is table under taking the join.
  We will successively prove that $\finiteblocklattice{G}$ is stable
  under meets, locally finite, and that it admits no infinitely
  increasing chain. We will then conclude that it is bounded and finite.

  Take two block systems $\blocksystem$ and $\blocksystem'$ in
  $\finiteblocklattice{G}$. Consider their meet in the lattice of block
  systems:
  \begin{displaymath}
    \blocksystem\wedge \blocksystem' = \{ B\cap B'\neq \varnothing \suchthat B\in \blocksystem \text{ and } B'\in \blocksystem' \}\,.
  \end{displaymath}
  By construction, it has again finite blocks, which proves that 
  $\finiteblocklattice{G}$ is stable under taking either
  joins or meets. In addition to this, the trivial block system
  $\bot=\{\{e\}\suchthat e\in E\}$ is obviously its minimal element.
  
  Let $\blocksystem$ be an element of $\finiteblocklattice{G}$.
  Consider the interval $[\bot, \blocksystem]$, and take a block
  system $\blocksystem'$ in that interval.
  The way a block $B$ in $\blocksystem$ splits into blocks
  in $\blocksystem'$ forces the way the blocks in the same orbit split
  in $\blocksystem'$ themselves. Since the blocks of $B$ are finite, and
  and there are finitely many orbits thereof for $G$ is oligomorphic,
  there are finitely many ways of
  splitting them. Therefore the interval $[\bot, \blocksystem]$ is
  finite, and the same holds for any interval:
  $\finiteblocklattice{G}$ is locally finite.

  Take a strict chain $C$ in $\finiteblocklattice{G}$. Using the local
  finiteness, embed this chain in a strict chain $C'$ where each step
  is a cover. Thanks to Lemma~\ref{lemma.increasing_on_lattices},
  $\bound$ is strictly increasing along that chain. Since $G$ is
  $P$-oligomorphic, $\bound$ is also bounded, and it follows
  successively that $C'$ and $C$ are finite.

  This ensures the existence of a maximum $\mathcal{B}^{<\infty}$, for else we could
  construct an infinite chain by starting with an element and then
  recursively take the join with an incomparable element. We
  conclude by remarking that $\finiteblocklattice{G}=[\bot, \mathcal{B}^{<\infty}]$
  is finite.
\end{proof}

\begin{proof}[Proof of Proposition~\ref{proposition.lattice_infiniteblock_systems}]

  The poset $\infiniteblocklattice{G}$ obviously has $\top=\{E\}$ as maximal
  element, and it is stable under joins: take indeed two block systems
  $\blocksystem$ and $\blocksystem'$ in $\finiteblocklattice{G}$:
  their blocks are infinite or included in the kernel. It is straightforward
  to check that the blocks of their join satisfy the same property.

  Let us prove that $\infiniteblocklattice{G}$ is stable under meet.
  Consider the meet of two block systems in $\infiniteblocklattice{G}$:
  \begin{displaymath}
    \blocksystem\wedge \blocksystem' = \{ B\cap B' \suchthat B\in \blocksystem 
    \text{ and } B'\in \blocksystem' \}\,.
  \end{displaymath}
  It has finitely many blocks. The union of all the finite ones is
  finite and stable under $G$; it is therefore included in the kernel of
  $G$. It follows that the blocks of
  $\blocksystem\wedge \blocksystem'$ are either infinite or included
  in the kernel of $G$, as desired.

  Consider an interval $[\blocksystem,\top]$ in
  $\infiniteblocklattice{G}$. Every block system $\blocksystem'$ from the interval
  is obtained by merging together some of the finitely many blocks of
  $\blocksystem$. Hence this interval is finite, and
  $\infiniteblocklattice{G}$ is locally finite.

  We conclude as in the proof of
  Proposition~\ref{proposition.lattice_finiteblock_systems}: there are
  no infinite chains in $\infiniteblocklattice{G}$ (a bit of care
  needs to be taken since $\bound(\blocksystem)$ may not be strictly
  increasing at steps where two finite blocks are merged; but there
  can be only finitely many such steps). This in turn ensures the
  existence of a minimal element $\mathcal{B}^\infty$ and the finiteness of
  $\infiniteblocklattice{G}$.

\end{proof}

\subsection{The nested block system}
\label{subsection.nested}

We may now use the structure of finite lattice on the block systems of a 
$P$-oligomorphic group to select a block system of a special kind 
(actually a system of blocks of blocks), which we expect to maximize the
associated lower bound, and thus to provide a best fitted set-up for the
study of the group.

\begin{definition}
  \label{theorem.canonical_block_system}
  Let $G$ be a $P$-oligomorphic permutation group. Take:
  \begin{compactenum}
  	\item the maximal (coarsest) element $\mathcal{B}^{<\infty}$ 
  of $\finiteblocklattice{G}$
	\item the minimal (finest) element $\mathcal{B}^\infty$
	of the lattice of block systems
	for the induced action of $G$ on $\mathcal{B}^{<\infty}$. 
  \end{compactenum}  
	We call the pair formed by the nested two partitions of $E$ defined 
	this way the \textbf{nested block system} 
	$\nested{G}$ of $G$.
  
\end{definition}

\begin{definition}
  We call an infinite primitive block of maximal finite blocks a \textbf{\blockofblocks}.
\end{definition}

Note that, under the preliminary assumption of maximality, 
the primitivity requirement is equivalent to asking that the infinite block
be minimal, so the above results on lattice structures imply that there is no
``choice'' for such \blocksofblocks -- as opposed to the many choices of blocks,
for the classical notion. From this point of view, the following (straightforward
from the construction process of the nested block system) proposition offers
an alternative definition of the \blocksofblocks, as the blocks of blocks
in $\nested{G}$.

\begin{proposition}[Structure of the nested system]
\label{proposition.structure_nested}
  The nested block system consists of finitely many \blocksofblocks, 
  and maybe one stable finite block. 
\end{proposition}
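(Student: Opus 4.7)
The plan is to verify that the outer partition $\mathcal{B}^\infty$ of the nested pair $\nested{G}$ has the claimed structure: finitely many blocks, all of which are infinite primitive \blocksofblocks, apart from at most one finite block corresponding to a stable finite block of $G$ on $E$. The finiteness and the shape of the exceptional block will come from applying Proposition~\ref{proposition.lattice_infiniteblock_systems} to the induced action $\overline{G}$ of $G$ on $\mathcal{B}^{<\infty}$; primitivity will then follow from combining the extremality of $\mathcal{B}^{<\infty}$ and $\mathcal{B}^\infty$.

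The first task is to verify that $\overline{G}$ is itself $P$-oligomorphic, so that Proposition~\ref{proposition.lattice_infiniteblock_systems} applies. A preliminary observation is that block sizes in $\mathcal{B}^{<\infty}$ are uniformly bounded by some constant $K$: blocks in the same $G$-orbit are conjugate hence of equal size, so the number of distinct block sizes is at most $\profile{G}(1)$, a finite number of finite values. A $\overline{G}$-orbit of an $n$-subset of $\mathcal{B}^{<\infty}$ is then entirely determined by the $G$-orbit (in $E$) of the union of its blocks, a subset of size between $n$ and $nK$; hence
\[ \profile{\overline{G}}(n) \leq \sum_{m=n}^{nK} \profile{G}(m)\,, \]
which is polynomial in $n$. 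Proposition~\ref{proposition.lattice_infiniteblock_systems} then yields that $\mathcal{B}^\infty$ has finitely many blocks, all infinite except for at most one finite block contained in the kernel of $\overline{G}$. By the maximality of $\mathcal{B}^{<\infty}$, this kernel reduces to at most the single element of $\mathcal{B}^{<\infty}$ corresponding to the (possibly absent) stable finite block of $G$ on $E$.

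For primitivity of the infinite blocks of $\mathcal{B}^\infty$, I would argue by contradiction: suppose an infinite block $B$ of $\mathcal{B}^\infty$ admits a non trivial block system $\mathcal{C}$ under the action of its stabilizer $\overline{G}_B$ on the finite blocks of $\mathcal{B}^{<\infty}$ it contains. Transporting $\mathcal{C}$ along $\overline{G}$-translates over the entire $\overline{G}$-orbit of $B$, and leaving the other blocks of $\mathcal{B}^\infty$ intact, yields a block system $\mathcal{B}''$ of $\overline{G}$ on $\mathcal{B}^{<\infty}$ that strictly refines $\mathcal{B}^\infty$. Two cases arise. If the blocks of $\mathcal{C}$ are infinite, then $\mathcal{B}''$ still belongs to $\infiniteblocklattice{\overline{G}}$, contradicting minimality of $\mathcal{B}^\infty$. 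If instead they are finite, then replacing the $\mathcal{B}^{<\infty}$-blocks lying in each finite block of $\mathcal{C}$ (and its $\overline{G}$-translates) by their union, while keeping $\mathcal{B}^{<\infty}$ unchanged outside the $G$-stable support of the orbit of $B$, produces a finite block system of $G$ on $E$ strictly coarsening $\mathcal{B}^{<\infty}$, contradicting its maximality in $\finiteblocklattice{G}$.

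The main delicate point is the second case of the primitivity argument: verifying that the selective coarsening performed on a single $\overline{G}$-orbit genuinely assembles with the unchanged $\mathcal{B}^{<\infty}$-blocks elsewhere into a valid $G$-equivariant block system on $E$. This should follow from the fact that the support in $E$ of a $\overline{G}$-orbit in $\mathcal{B}^\infty$ is $G$-stable, as is its complement, so that the two parts can be partitioned independently in a $G$-equivariant manner.
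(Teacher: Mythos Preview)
Your proof is correct and follows essentially the same route as the paper, which in fact omits an explicit proof and declares the proposition ``straightforward from the construction process of the nested block system.'' The key ideas are already laid out in the discussion preceding Definition~\ref{theorem.canonical_block_system}: the induced action $\overline{G}$ on $\mathcal{B}^{<\infty}$ admits no non-trivial finite block (by maximality of $\mathcal{B}^{<\infty}$), so its block systems consist of finitely many infinite blocks plus possibly a singleton for the stable finite block; minimality of $\mathcal{B}^\infty$ in $\infiniteblocklattice{\overline{G}}$ then forces primitivity of the infinite blocks. Your write-up makes one point explicit that the paper glosses over, namely that $\overline{G}$ is itself $P$-oligomorphic (needed to invoke Proposition~\ref{proposition.lattice_infiniteblock_systems}); your bound via uniform boundedness of block sizes is a clean way to see this.

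One simplification you might consider for the second case of the primitivity argument: rather than assembling a coarsened block system by hand and worrying about $G$-equivariance across orbits, observe directly that a non-trivial finite block $C$ for the stabilizer of $B$ is automatically a block for $\overline{G}$ itself (if $\overline{g}(C)\cap C\neq\varnothing$ then $\overline{g}(B)\cap B\neq\varnothing$, hence $\overline{g}(B)=B$, hence $\overline{g}(C)=C$). This yields a non-trivial finite block for $\overline{G}$, which the paper's preceding discussion already rules out by maximality of $\mathcal{B}^{<\infty}$. This avoids the ``delicate point'' you flag at the end.
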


Besides providing a competitive lower bound on the profile growth, 
the nested block system can pride itself on some pleasant properties
of manageability.

\begin{lemma}
  \label{lemma.stable_superblocks_are_indep}
  Take two stable \blocksofblocks; the actions induced by $G$ on their sets of
  maximal finite blocks are independent (up to taking a normal subgroup 
  of finite index).
\end{lemma}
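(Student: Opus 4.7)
The approach is to analyze the subdirect product structure between the two induced actions and to rule out the one obstruction that cannot be killed by passing to a finite index subgroup. Let $M_1$ and $M_2$ be two stable \blocksofblocks, and denote by $G^{M_i}$ the group induced by $G$ on the set of maximal finite blocks inside $M_i$. Since each $M_i$ is an infinite primitive block of $\mathcal{B}^{<\infty}$, Theorems~\ref{primitive.hightly.homogeneous} and~\ref{profile.one.classification} identify each $G^{M_i}$ (up to closure) as one of the five highly homogeneous groups. As both $M_i$ are stable, the natural restriction $G \to G^{M_1}\times G^{M_2}$ identifies the action on $M_1\cup M_2$ with a subdirect product, to which the notions of Subsection~\ref{subsection.subdirect_and_synchro} apply.

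By Remark~\ref{remark.synchro_between_primitive_actions_on_blocks}, the synchronization between these two primitive actions is either trivial, total, or of order $2$; the last option can occur only when the maximal finite blocks in both superblocks are singletons and each $G^{M_i}$ is $\Rev(\QQ)$ or $\Rev(\QQ/\ZZ)$. The trivial case gives the conclusion directly. To handle the order-$2$ case, I would take the kernel $K$ of the composition $G \twoheadrightarrow G^{M_1}/N_1 \simeq \ZZ/2\ZZ$: it is a normal subgroup of $G$ of index $2$ whose image in $G^{M_1}\times G^{M_2}$ lies in $N_1\times N_2$, so that no synchronization remains between the actions of $K$ on $M_1$ and $M_2$.

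The main step is to rule out total synchronization. If it held, the image of $G$ would be the graph of an isomorphism $\phi: G^{M_1}\to G^{M_2}$, tying the two actions together. Since each $G^{M_i}$ is a closed highly homogeneous group and its primitive action on a countable set is determined up to equivariant bijection, transporting the action of $G^{M_2}$ along $\phi$ would produce a $G$-equivariant bijection $\pi$ between the maximal finite blocks of $M_1$ and those of $M_2$. Pairing $C_i := B_{1,i}\cup B_{2,\pi(i)}$ would then yield, together with the blocks of $\mathcal{B}^{<\infty}$ outside $M_1\cup M_2$, a $G$-invariant block system all of whose blocks are finite and strictly coarser than $\mathcal{B}^{<\infty}$ on $M_1\cup M_2$, contradicting the maximality of $\mathcal{B}^{<\infty}$ in $\finiteblocklattice{G}$.

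The crux of the argument, and the step I expect to require the most care, is the rigidity statement used in the last paragraph: converting the abstract group-theoretic isomorphism $\phi$ between the two induced groups into a concrete $G$-equivariant bijection $\pi$ of the underlying block sets. This uses that among the five closed highly homogeneous groups, each admits essentially one primitive oligomorphic action of profile $1$ on a countable set; equivalently, that the normal subgroup $N_i$ being trivial forces every element of $G$ to act on the blocks of $M_2$ via the $\pi$-conjugate of its action on the blocks of $M_1$. With this bijection in hand, verifying that the pairing forms a block system, and that it strictly coarsens $\mathcal{B}^{<\infty}$, is then routine.
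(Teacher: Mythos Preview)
Your argument follows exactly the paper's route: identify the two induced actions as highly homogeneous, invoke the trichotomy of Remark~\ref{remark.synchro_between_primitive_actions_on_blocks}, kill the order-$2$ synchronization by passing to a finite index normal subgroup, and rule out total synchronization via the maximality of the finite blocks in $\finiteblocklattice{G}$. The paper's proof is considerably terser---it dispatches total synchronization with the single sentence ``the maximality of the finite blocks allows to eliminate the case of total synchronizations''---whereas you spell out the intended mechanism (pairing blocks across the two superblocks to build a strictly coarser system of finite blocks) and, commendably, flag the rigidity step (turning the abstract isomorphism of the induced groups into an equivariant bijection of block sets) as the point that deserves justification; the paper does not address this explicitly either.
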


\begin{proof}
  By definition, the actions on the finite blocks of each \blockofblocks 
  are isomorphic to one of the five highly homogeneous groups. 
  Recall then Remark~\ref{remark.synchro_between_primitive_actions_on_blocks}, 
  and if need be take the finite index subgroup in which the actions of 
  type $\text{Rev}(\QQ)$ and $\text{Rev}(\QQ/\ZZ)$ are replaced by 
  $\text{Aut}(\QQ)$ and $\text{Aut}(\QQ/\ZZ)$ to avoid synchronizations of
  order $2$. Now the maximality of the finite blocks allows to eliminate the
  case of total synchronizations, which leaves none possible.
\end{proof}

Put otherwise, \blocksofblocks in $\nested{G}$ are not that far from 
independence, which would allow to use Remark~\ref{remark.independent_parts}.
This paper will eventually clarify what ``not that far'' actually means.

\begin{lemma}
  \label{lemma.finite_index_same_nested}
  Let $G$ be a $P$-oligomorphic permutation group, and $K$ be a finite
  index subgroup of $G$. Then we have
  $\nested{K}=\nested{G}$.
\end{lemma}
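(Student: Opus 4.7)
The plan is to first reduce to the case where $K$ is normal in $G$, then exploit the lattice-closure properties established in the previous subsection to show that conjugation by elements of $G$ leaves the two extremal block systems making up $\nested{K}$ invariant.

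For the reduction, I would replace $K$ by its normal core $N = \bigcap_{g \in G} g K g^{-1}$. Since $K$ has finite index, this intersection ranges over finitely many distinct conjugates, so $N$ is a normal subgroup of $G$ of finite index, and $N \subseteq K$. Moreover $N$ is also normal of finite index in $K$. If the lemma is established whenever the smaller group is normal, then applying it to both pairs $(G, N)$ and $(K, N)$ gives $\nested{G} = \nested{N} = \nested{K}$.

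Assume from now on that $K$ is normal of finite index in $G$; by Lemma~\ref{lemma.subgroup.rate}, $K$ is itself $P$-oligomorphic, so $\nested{K}$ is well-defined. For the inner partition, the inclusion $\finiteblocklattice{G} \subseteq \finiteblocklattice{K}$ (any $G$-block system is a $K$-block system) immediately yields $\mathcal{B}^{<\infty}(G) \leq \mathcal{B}^{<\infty}(K)$. For the reverse inequality, I plan to show directly that $\mathcal{B}^{<\infty}(K)$ is already $G$-invariant: for any $g \in G$, the conjugate partition $g \cdot \mathcal{B}^{<\infty}(K)$ is a $g K g^{-1} = K$-block system (by normality) with finite blocks, hence lies in $\finiteblocklattice{K}$. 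By Lemma~\ref{lemma.finiteblocklattice_stable_by_join} (applicable since $K$ is oligomorphic), the join $\mathcal{B}^{<\infty}(K) \vee (g \cdot \mathcal{B}^{<\infty}(K))$ still sits in $\finiteblocklattice{K}$; maximality forces it to equal $\mathcal{B}^{<\infty}(K)$, whence $g \cdot \mathcal{B}^{<\infty}(K) \leq \mathcal{B}^{<\infty}(K)$, and replacing $g$ by $g^{-1}$ gives equality. Thus $\mathcal{B}^{<\infty}(K)$ is a $G$-block system and $\mathcal{B}^{<\infty}(K) \leq \mathcal{B}^{<\infty}(G)$.

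The outer partition is handled by the same template applied to the actions that $G$ and $K$ induce on the common block set $\mathcal{B}^{<\infty} := \mathcal{B}^{<\infty}(G) = \mathcal{B}^{<\infty}(K)$, but dualized: the roles of join/maximum and meet/minimum are swapped, and one uses Proposition~\ref{proposition.lattice_infiniteblock_systems} to keep the meet $\mathcal{B}^{\infty}(K) \wedge (g \cdot \mathcal{B}^{\infty}(K))$ inside $\infiniteblocklattice{-}$, minimality then forcing it to equal $\mathcal{B}^{\infty}(K)$. The one verification needed before transferring the argument is that $K$ descends to a normal finite-index subgroup of the induced action of $G$; this is routine, since the kernels of both induced actions are normal in $G$ and the relevant quotient is a quotient of $G/K$, hence finite. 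I expect this bookkeeping about induced actions, rather than any genuinely new idea, to be the only mild technicality.
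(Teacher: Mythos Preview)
Your proof is correct and takes a genuinely different route from the paper's. You argue purely via the lattice structure: once $K$ is normal, conjugation by $g\in G$ sends $\finiteblocklattice{K}$ (resp.\ $\infiniteblocklattice{\bar K}$) to itself, so the unique maximum (resp.\ minimum) is fixed by every $g$, hence is a $G$-block system, and the desired equalities follow from the extremal characterizations in Propositions~\ref{proposition.lattice_finiteblock_systems} and~\ref{proposition.lattice_infiniteblock_systems}. The paper instead argues structurally: it invokes the classification of highly homogeneous groups (Theorem~\ref{profile.one.classification}) to see that $K$ still acts highly homogeneously on the finite blocks within each \blockofblocks of $G$, and then rules out strictly larger finite blocks for $K$ by observing that any such enlargement would force a full (hence infinite) synchronization between the actions of $K$ on two distinct \blocksofblocks, contradicting finite index.

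Your approach is cleaner and more self-contained: it uses only the lattice machinery of Subsection~\ref{subsection.lattices} and avoids the classification of primitive groups entirely. The paper's argument, on the other hand, extracts a bit more along the way (namely that $K$'s action on the finite blocks of each \blockofblocks is again highly homogeneous), which is reused later. One small point worth spelling out in your outer step: to ensure $g\cdot\mathcal{B}^{\infty}(K)$ stays in $\infiniteblocklattice{\bar K}$ you need $g$ to stabilize the kernel of $\bar K$; this holds because, $K$ being normal, $g$ permutes the $\bar K$-orbits and hence sends finite ones to finite ones.
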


\begin{proof}
  We aim to prove that $K$ has the same superblocks as $G$.
  Observe first that blocks of imprimitivity of any permutation group
  are still blocks for any subgroup, as a direct consequence of the
  definition.
  Let $BB$ be a \blockofblocks, and $\P$ be the action of $G$ (implicitely
  after stabilization and restriction to the support of $BB$) on the set 
  of finite blocks of $BB$. Then, $\P$ is one of the five highly homogeneous
  groups; as the action of $K$ on the same set of finite blocks is necessarily
  a finite index subgroup of $\P$, it is highly homogeneous as well.
  We now just need to justify that the maximal finite blocks of $G$ are
  still maximal for $K$. Assume some of them are not, then
  there exists $m\geq 2$ \blocksofblocks $(BB\BBindex)_{1\leq j\leq m}$ 
  in $\nested{G}$ and an ordering of their respective finite blocks 
  $(B\BBindex_i)_i$ such that the unions $\cup_j B\BBindex_i$
  form new blocks for $K$ (up to taking the join in $\finiteblocklattice{K}$).
  This can only happen if some of the actions of $K$ on distinct $(B\BBindex_i)_i$
  fully synchronize for $1\leq j\leq m$. Since they are infinite 
  (highly homogeneous) actions, this is in contradiction with $K$'s 
  being of finite index. (Indeed, the action of $K$ on the blocks would be of
  infinite index in that of $G$, which is not possible.)
\end{proof}
  
The reader has probably already wondered at this point why to stop here.
We already have blocks of blocks, why not blocks of blocks of blocks, etc.?
The blocks of blocks of the nested block system allow a good description of
wreath products of type $F_1 \wreath P \wreath F_2$, where $F_1$
and $F_2$ are two finite groups that may be trivial and $P$ is an
infinite permutation group (recall Example~\ref{example.nested_system}). 
But what if we add
a layer of wreath product: $F_1\wreath P \wreath F_2 \wreath G$\,?
Well, it simply turns out that if $G$ is not finite then the group is not
$P$-oligomorphic anymore; and if it is, we are actually back to the same
configuration as earlier (by associativity).
Of course, if we had not made any hypothesis on the growth of the profile, it
would be relevant to consider any number of layers of blocks.

\section{Classification in the case of a single superblock}
\label{section.finite_blocks}
\label{hard_case}

In this section, we consider the class of closed $P$-oligomorphic
permutation groups $G$ with a single \blockofblocks, of which we
denote by $B_1,B_2,\ldots$ the maximal finite blocks. This class
includes wreath products $G=H \wreath \sg_\infty$ where $H$ is finite.
In Subsection~\ref{section.finite_blocks.family_examples} we construct
other examples by direct products; then, by combining wreath products
and direct products, we introduce a family of permutation groups that
subsumes all these examples. We show that their orbit algebras are
invariant rings of permutation groups, hence finitely generated and
Cohen-Macaulay.

In Subsection~\ref{section.finite_blocks.classification} we announce a
classification theorem: any instance of this class is isomorphic to
exactly one permutation group in the family. This answers positively
Macpherson's question for this class of permutation groups.

The next subsections undertake the proof
of the classification theorem: Subsection~\ref{section.finite_blocks.action} handles the
action on the set of blocks; Subsection~\ref{section.finite_blocks.tower} 
introduces the tower of $G$ in order to deal with the
action within the blocks, an object that will be the key tool in the rest 
of the proof, and that turns out to be classified; 
finally, Subsection~\ref{section.finite_blocks.lift} shows that this classification can be
lifted to the groups themselves.

\subsection{A family of examples beyond wreath products}
\label{section.finite_blocks.family_examples}

\begin{definition}
  We call \textbf{direct product on blocks} of two permutation groups
  $H$ and $S$ and denote by $H \mathbin{\square} S$ the permutation group defined by 
  the action of $H \times S$ on $\text{deg}(S)$ blocks of size $\text{deg}(H)$ by 
  \begin{displaymath}
    b_{r,i} . (\tau, \sigma) = b_{\tau(r),\sigma(i)}\,,
  \end{displaymath}
  where $b_{1,i},\ldots b_{m,i}$ is an arbitrary ordering of the elements
  of each block $B_i$. It is isomorphic to the natural action of
  $H \times S$ on the cartesian product of the supports.
\end{definition}

This can be pictured as $H$ and $S$ acting respectively by permutation
of the rows and of the columns of a (potentially infinite) matrix.

As opposed to the wreath product, where $H$ acts independently on each
block, here $H$ acts diagonally on all blocks at once. These two cases
are in this regard the two opposite ends of the spectrum of all
possible synchronizations between blocks.

It is then natural to think of a class of groups that would complete
the spectrum. We introduce such groups, as hybrids of wreath products
and direct products.

\begin{definition}
  Let $H \triangleleft H_0$ and $\P$ be three permutation groups, with $H$ 
  and $H_0$ finite.
  Denote by $\groupfiniteblocks[\P]{H_0}{H}$
  the permutation group generated by the elements of $H\wreath \P$ and
  $H_0 \mathbin{\square} \P$. For short, denote by
  $\groupfiniteblocks{H_0}{H}=\groupfiniteblocks[\sg_\infty]{H_0}{H}$.
\end{definition}

\begin{remark}
  \label{remark.classification_finite_blocks}
  The group $\groupfiniteblocks[\P]{H_0}{H}$ is $P$-oligomorphic if
  and only if we have $\P=\sg_\infty$ or $H_0=H=\Id_1$: indeed,
  $\groupfiniteblocks[\sg_\infty]{H_0}{H}$ contains $H\wreath \sg_\infty$ as a
  subgroup; it is therefore $P$-oligomorphic; the other implications
  are trivial using Lemma~\ref{lemma.sym_on_finite_blocks}.
\end{remark}

\begin{lemma}
  \label{lemma.subwreath}
  The permutation group $G=\groupfiniteblocks[\P]{H_0}{H}$ contains
  $H\wreath \P$ as a normal subgroup of finite index $[H:H_0]$.
  In addition, we have $G=(H\wreath \P)H_0$.
\end{lemma}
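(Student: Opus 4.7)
The plan is to proceed in three steps: verify that $H \wreath \P$ is normal in $G$, exhibit the product decomposition $G = (H \wreath \P)\,H_0$, and then compute the index via the second isomorphism theorem.

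For concreteness I would fix notation by writing elements of $H \wreath \P$ as pairs $\bigl((h_i)_i, p\bigr)$ with $h_i \in H$ and $p \in \P$, and identifying $H_0$ with its diagonal embedding $(h_0, \Id)$ inside $H_0 \mathbin{\square} \P$, i.e.\ the element acting as $h_0$ simultaneously on every block. With these coordinates, both generating subgroups of $G$ project onto $\P$ and share the block-permuting copy $\{(\Id, p) : p \in \P\}$. Since $G$ is generated by $H \wreath \P$ and $H_0 \mathbin{\square} \P$, normality of $H \wreath \P$ reduces to checking that every element of $H_0 \mathbin{\square} \P$ normalizes it; and any such element factors as $(h_0, \Id) \cdot (\Id, p)$, where the right factor already lies in $H \wreath \P$. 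A direct computation of the action on a blockwise coordinate then gives
\[
(h_0, \Id)\cdot\bigl((h_i)_i, p\bigr)\cdot(h_0, \Id)^{-1} = \bigl((h_0 h_i h_0^{-1})_i, p\bigr),
\]
and each $h_0 h_i h_0^{-1}$ lies in $H$ because $H \triangleleft H_0$, proving normality.

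For the generation identity, every element of $H_0 \mathbin{\square} \P$ may be rewritten as $(\Id, p)\cdot(h_0, \Id)$, a product of an element of $H \wreath \P$ with an element of the diagonal $H_0$; since $H \wreath \P$ is now known to be normal, $(H \wreath \P)\,H_0$ is already a subgroup of $G$ and it contains both generating subsets, so it equals $G$. Finally, the second isomorphism theorem yields $G/(H \wreath \P) \simeq H_0/\bigl(H_0 \cap (H \wreath \P)\bigr)$; a diagonal element $(h_0, \Id)$ belongs to $H \wreath \P$ precisely when the constant tuple $(h_0, h_0, \dots)$ has entries in $H$, i.e.\ when $h_0 \in H$, so the intersection is the diagonal copy of $H$ and the index is $[H_0 : H]$, which is finite since $H_0$ itself is finite.

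The main subtlety will be bookkeeping the two different embeddings of $H$ — one blockwise inside $H \wreath \P$, the other diagonal inside $H_0$ — during the conjugation computation; once the action on blockwise coordinates is pinned down, the rest is a formal application of the isomorphism theorem.
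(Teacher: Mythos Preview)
Your proof is correct and follows essentially the same route as the paper: identify $H_0$ with its diagonal copy, use $H\triangleleft H_0$ to see that diagonal conjugation preserves $H\wreath\P$ (and that the $\P$-part commutes with the diagonal), deduce $G=(H\wreath\P)H_0$, and read off the index. The paper phrases the last step as an explicit coset decomposition $G=\bigcup_{h_0}(H\wreath\P)h_0$ indexed by representatives of $H_0/H$, whereas you invoke the second isomorphism theorem and compute $H_0\cap(H\wreath\P)=H$; these are the same computation in different clothing.
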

\begin{proof}
  First note that $G$ can be defined equivalently as the group
  generated by $H\wreath \Id_\infty=\,<H^\infty$, $\Id\wreath \P>$, and the
  finite group $H_0 \mathbin{\square} \Id_\infty$. For the sake of notations, and when
  there is no ambiguity, we identify an element $h_0$ of $H_0$ with
  the element $(h_0,h_0,\ldots)$ of $H_0 \mathbin{\square} \Id_\infty$, and identify
  $H_0$ with $H_0 \mathbin{\square} \Id_\infty$.

  Note that $h_0$ commutes with the elements of $\Id\wreath \P$ and,
  by normality of $H$ in $H_0$, skew-commutes with those of
  $H\wreath \Id_\infty$\,, meaning $H^\infty h_0=h_0H^\infty$. It follows that
  we have
  \begin{displaymath}
    G = \bigcup_{h_0\in H_0} (H\wreath\P).h_0\,.
  \end{displaymath}
  This union becomes a decomposition into cosets if the range is
  restricted to some collection of representatives of the cosets of
  $H$ in $H_0$. Therefore $H\wreath \P$ is normal and of finite
  index $[H:H_0]$ in $G$, as desired.
\end{proof}

We now describe the orbit algebra of $\groupfiniteblocks[\P]{H_0}{H}$
as an invariant ring of a finite permutation group. Recall that the orbit
algebra $\orbitalgebra{H\wreath\P}$ of $H\wreath\P$ is the free
commutative algebra $\KK[X]$, with
$X=(X_{\overline{A}})_{\overline{A}}$ where $\overline A$ ranges through
the non-trivial $H$-orbits, and $X_{\overline{A}}$ denotes the
$H\wreath \P$\,-orbit of $A$, seen as an element of the orbit algebra.
Finally, lift the action of $H_0$ on the $H$-orbits $\overline{A}$ to
an action on the variables $X_{\overline{A}}$.
\begin{proposition}
  \label{proposition.wreath_direct_orbit_algebra}
  With the above notations, the orbit algebra $\orbitalgebra{G}$ of
  $G=\groupfiniteblocks[\P]{H_0}{H}$ is isomorphic to the invariant
  ring $\KK[X]^{H_0}$.
\end{proposition}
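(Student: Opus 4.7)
The plan is to combine the subgroup embedding of orbit algebras from Lemma~\ref{lemma.subalgebras}(2) with the explicit description of $\orbitalgebra{H\wreath\P}$ from Example~\ref{example.wreath_products}(3), and then identify the image of $\orbitalgebra{G}$ as the $H_0$-invariants.

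First I would note that, by Lemma~\ref{lemma.subwreath}, $H\wreath\P$ is a normal finite-index subgroup of $G$, with $G=(H\wreath\P)H_0$. This ensures that $\orbitalgebra{G}$ embeds as a subalgebra of $\orbitalgebra{H\wreath\P}$ via the morphism $\phi$ of Lemma~\ref{lemma.subalgebras}(2). Moreover, since $G$ is generated by $H\wreath\P$ and $H_0$, characterizing elements of $\orbitalgebra{G}$ inside $\orbitalgebra{H\wreath\P}$ amounts to characterizing the $H\wreath\P$-invariants (automatic) that are additionally $H_0$-invariant.

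Next I would make explicit how the $G$-orbits of finite subsets relate to the $H\wreath\P$-orbits. By the decomposition $G=(H\wreath\P)H_0$, every $G$-orbit $O$ splits as a finite disjoint union of $H\wreath\P$-orbits $O_1,\ldots,O_r$, and these are precisely an $H_0$-orbit under the natural action of $H_0$ on $\age{H\wreath\P}^+$ (which is well defined because $H\triangleleft H_0$). Under the canonical embedding $i_G$ into the set algebra $\subsetalgebra{E}$, the orbit $O$ maps to the sum $i_{H\wreath\P}(O_1)+\cdots+i_{H\wreath\P}(O_r)$, so $\phi$ identifies the $G$-orbit with the $H_0$-orbit sum of the corresponding elements of $\orbitalgebra{H\wreath\P}$.

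Now I would invoke Example~\ref{example.wreath_products}(3), which provides the isomorphism $\orbitalgebra{H\wreath\P}\simeq\KK[X]$ sending the $H\wreath\P$-orbit of a subset $A$ (described by its multiset of nontrivial $H$-orbits contained in the blocks) to the corresponding monomial in the $X_{\overline{A}}$. The key compatibility check is that under this isomorphism, the conjugation action of $H_0$ on $\orbitalgebra{H\wreath\P}$ coincides with the permutation action of $H_0$ on the variables $X_{\overline{A}}$ induced by the action of $H_0$ on the set of nontrivial $H$-orbits. This is essentially a bookkeeping step: the diagonal action of $H_0\mathbin{\square}\P$ maps the $H$-orbit $\overline{A}$ in each block uniformly to the $H$-orbit $h_0\cdot\overline{A}$, hence $H\wreath\P$-orbits are permuted according to the permutation of the $H$-orbits by $H_0$. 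Combining this with the orbit-sum description above, $\phi(\orbitalgebra{G})$ is spanned by the $H_0$-orbit sums of monomials in $\KK[X]$; these orbit sums form the classical linear basis of $\KK[X]^{H_0}$, so $\phi(\orbitalgebra{G})=\KK[X]^{H_0}$, as desired.

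The main (modest) obstacle is the compatibility check in step three: verifying that the isomorphism of Example~\ref{example.wreath_products}(3) intertwines the action of $H_0$ on $H\wreath\P$ by conjugation with the declared permutation action on the variables $X_{\overline{A}}$. Once this is in hand, the statement reduces to the well-known fact that the invariants of a permutation representation on polynomials are spanned by monomial orbit sums. Note that nothing beyond this is needed to also recover the finite-generation and Cohen-Macaulay properties, since invariant rings of finite groups acting linearly on polynomial rings are Cohen-Macaulay by a classical theorem of Hochster-Eagon.
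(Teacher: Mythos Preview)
Your proof is correct and follows essentially the same approach as the paper's. The paper's own proof is a one-sentence sketch that simply asks the reader to check the compatibility of the $H_0$-action on $H\wreath\P$-orbits with the declared permutation action on the variables $X_{\overline{A}}$; you have unpacked precisely this verification, together with the surrounding reductions (the embedding $\orbitalgebra{G}\hookrightarrow\orbitalgebra{H\wreath\P}$ via Lemma~\ref{lemma.subalgebras}(2) and the identification $\orbitalgebra{H\wreath\P}\simeq\KK[X]$ from Example~\ref{example.wreath_products}(3)) that the paper leaves implicit.
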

\begin{proof}
  That an element $h_0\in H_0$ and $A$ a subset of the support of $H$.
  Check that the $H\wreath\P$-orbit of $A$ is mapped onto another such
  $H\wreath\P$-orbit, as prescribed by the announced action of $H_0$
  on the variables $X_{\overline{A}}$.
\end{proof}

\begin{remark}
  The variables of invariant rings are commonly taken of degree $1$;
  this is not the case here: the degree of the variable
  $X_{\overline{A}}$ is given by $|A|$. This must be taken into
  account when computing the Hilbert series using Molien's formula or
  Pólyà enumeration.
\end{remark}

\subsection{Classification and application to Macpherson's conjecture}
\label{section.finite_blocks.classification}

We may now state the main theorem of this section, which includes the
classification of
the trivial case of highly homogeneous groups $\P$ (case
$H_0 = H = \Id_1$ below); nevertheless, the core of this section is
about the case of non trivial finite blocks, in which $\P$ is
necessarily $\sg_\infty$.

\begin{theorem}[Classification on one \blockofblocks]
  \label{theorem.classification_finite_blocks}
  Let $G$ be a closed $P$-oligomorphic permutation group
  such that $\nested{G}$ consists of a single \blockofblocks.
  Then $G$ is isomorphic as a permutation
  group to $\groupfiniteblocks[\P]{H_0}{H}$, where
  $H \triangleleft H_0$ are two finite permutation groups and $\P$ is
  one of the five (closed) highly homogeneous groups. In
  addition, $H$, $H_0$, and $\P$ are unique, and satisfy the condition
  of Remark~\ref{remark.classification_finite_blocks}.
\end{theorem}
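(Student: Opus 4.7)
The plan is to decouple the action on the set of blocks from the action inside a single block: the former yields $\P$ via the classification of closed highly homogeneous groups, while the latter is captured by a tower of finite groups from which the pair $(H, H_0)$ can be extracted; the two pieces are then reassembled using the closure of $G$.

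First I would identify $\P$. Since $\nested{G}$ consists of a single \blockofblocks, the induced action of $G$ on the set of maximal finite blocks is closed, oligomorphic, and primitive; by Theorem~\ref{primitive.hightly.homogeneous} combined with $P$-oligomorphicity, it is highly homogeneous, hence one of the five groups of Theorem~\ref{profile.one.classification}, which I take as $\P$. If the finite blocks are singletons then $G = \P$ and $H = H_0 = \Id_1$; otherwise, Lemma~\ref{lemma.sym_on_finite_blocks} forces $\P = \sg_\infty$, and the remainder of the argument treats this case.

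Next I would extract the candidates for $H$ and $H_0$ from the kernel of the action on blocks. Let $S \triangleleft G$ be the subgroup fixing every finite block setwise, and define
\begin{displaymath}
  H_0 := \{\, s|_{B_1} : s \in S \,\}, \qquad
  H := \{\, s|_{B_1} : s \in S,\ s \text{ fixes every other block pointwise} \,\}.
\end{displaymath}
Pulling an element of $H_0$ back into $S$ and conjugating any element of ``the $H$-subgroup of $S$'' by it yields again an element trivial outside $B_1$, so $H \triangleleft H_0$; transitivity of $\P$ on blocks further ensures that the analogous pair defined at any other block $B_i$ coincides with $(H, H_0)$ under the natural identifications.

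The heart of the proof is then to classify the tower of $G$: for every finite set $I$ of blocks, the restriction $G_I$ of $G$ to $\bigcup_{i \in I} B_i$ is a finite permutation group, and these form a projective system. I would show that each $G_I$ is isomorphic to $\groupfiniteblocks[\sg_{|I|}]{H_0}{H}$; that is, $G_I \cap S$ coincides with the subgroup of $H_0^{I}$ whose coordinates all lie in a common $H$-coset of $H_0$. Working projection by projection, Remark~\ref{remark.synchro_between_primitive_actions_on_blocks} leaves, for any pair of blocks, only the options of no synchronization, full synchronization, or an order-$2$ reflection; the latter is excluded inside finite blocks by Lemma~\ref{lemma.sym_on_finite_blocks}. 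The combinatorial type of the synchronizations across all of $I$ must be $\sg_{|I|}$-invariant, and any intermediate (non diagonal) pattern would either violate $P$-oligomorphicity by producing exponentially many orbits (adapting the argument of Lemma~\ref{lemma.sym_on_finite_blocks}) or violate the maximality of $\mathcal{B}^{<\infty}$ in $\finiteblocklattice{G}$ by producing a strictly coarser system of finite blocks.

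Finally I would lift the tower classification to $G$. Since $G$ is closed, it is the inverse limit of its tower in the simple convergence topology, and the same holds for $\groupfiniteblocks[\sg_\infty]{H_0}{H}$ by construction (Lemma~\ref{lemma.subwreath}); hence the two groups coincide. Uniqueness of $H$, $H_0$ and $\P$ is automatic because each has been defined intrinsically from $\nested{G}$ and $S$. The main obstacle is the classification of the tower: pinning down \emph{exactly} the subdirect structure on each pair of blocks, ruling out every ``partial'' synchronization through a simultaneous use of the $\sg_\infty$-symmetry, the polynomial growth bound, and the maximality of the finite blocks, while propagating the constraints coherently from pairs to arbitrary finite subsets of blocks.
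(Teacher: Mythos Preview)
Your overall architecture---identify $\P$, extract $(H_0,H)$ from the block kernel, classify the finite restrictions $G_I$, lift by closure---matches the paper's. The genuine gap is in the classification of the tower, where your argument invokes the wrong tool.

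Remark~\ref{remark.synchro_between_primitive_actions_on_blocks} concerns the synchronization between two \emph{infinite primitive} actions (on points or on the set of blocks); its trichotomy comes from the fact that the highly homogeneous groups have at most one proper finite index normal subgroup. It says nothing about the subdirect structure of the block kernel $S$ restricted to two finite blocks $B_i\cup B_j$: there $S$ is a subdirect product inside $H_0\times H_0$ with $H_0$ an arbitrary finite permutation group, and the possible synchronizations are indexed by \emph{all} normal subgroups of $H_0$, not just the trivial, full, or index-$2$ ones. Your subsequent appeal to ``$\sg_{|I|}$-invariance'', $P$-oligomorphicity, and maximality of $\mathcal{B}^{<\infty}$ does not close this: nothing there rules out, say, $S_{|B_i\cup B_j}$ being the fibre product over $H_0/N$ for some intermediate normal $N$. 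Relatedly, your $H$ is defined as the intersection $\bigcap_i H_i$ of the tower, which a priori could be strictly smaller than $H_1$; you never establish that the tower stabilizes at the first step.

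The paper fills this gap with two concrete ingredients you are missing. First (Lemma~\ref{lemma.ladder}), using closure and a pigeonhole/extraction argument, it produces a coherent enumeration of the elements of each block so that $G$ contains the ``ladder'' $L=\Id_m\mathbin{\square}\sg_\infty$; in particular, for any two blocks there is an involution in $G$ swapping them \emph{straightforwardly} and fixing all others. Second (Proposition~\ref{proposition.tower}), it uses this ladder on four consecutive blocks: given $(1,g,h,l)\in S$ with $g\in H_1$, conjugation by the straightforward swap of the first two blocks yields $(g,1,h,l)$, and the product $(g,g^{-1},1,1)$ witnesses $g\in H_2$. This elementary computation is what forces $H_1=H_2=\cdots$, after which $S$ is recovered from the (now constant) tower via iterated subdirect products (Lemma~\ref{lemma.same_tower_same_stab}), and $G=S\rtimes L$ follows. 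Your lifting step via inverse limits is fine once the finite restrictions are pinned down, but as written the proposal does not actually pin them down.
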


\begin{proof}
  The statement is obvious if the finite blocks are singletons. 
  Otherwise, by
  Lemma~\ref{lemma.sym_on_finite_blocks}, $G$ acts on the set of finite
  blocks as $\P=\sg_\infty$. Use the upcoming
  Proposition~\ref{proposition.tower} to classify the action of $G$ on
  its blocks (the \emph{tower of $G$}) and the upcoming
  Proposition~\ref{proposition.finite_blocks_structure} to lift this
  classification to $G$ itself.
\end{proof}

A positive answer to Macpherson's question follows immediately thanks
to the description of the orbit algebras of the groups
$\groupfiniteblocks[\P]{H_0}{H}$ from
Proposition~\ref{proposition.wreath_direct_orbit_algebra}.

\begin{corollary}[Macpherson on one \blockofblocks]
  \label{corollary.finite_blocks}
  Let $G$ be a closed $P$-oligomorphic permutation group such that
  $\nested{G}$ consists of a single \blockofblocks. Then,
  $\orbitalgebra{G}$ is an invariant ring of permutation group, hence
  finitely generated, Cohen-Macaulay, and of algebraic dimension
  the number of $H$-orbits (of non trivial subsets),
  where $H$ is defined by the classification.
\end{corollary}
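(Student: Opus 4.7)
The plan is to chain together the two previous results and then cite standard invariant theory. First, I would apply Theorem~\ref{theorem.classification_finite_blocks} to obtain an isomorphism of permutation groups $G \cong \groupfiniteblocks[\P]{H_0}{H}$, where $H \triangleleft H_0$ are finite and $\P$ is one of the five closed highly homogeneous groups. Since the profile and orbit algebra are invariants of the permutation group up to isomorphism, I may work with $\groupfiniteblocks[\P]{H_0}{H}$ directly. Then Proposition~\ref{proposition.wreath_direct_orbit_algebra} immediately identifies $\orbitalgebra{G}$ with the invariant ring $\KK[X]^{H_0}$, where $X=(X_{\overline A})_{\overline A}$ is indexed by the non-trivial $H$-orbits of the (finite) support of $H$.

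From there I would invoke two classical results of invariant theory for finite groups acting linearly on a polynomial ring: Hilbert's finiteness theorem guarantees that $\KK[X]^{H_0}$ is finitely generated as a $\KK$-algebra (and in fact as a module over any homogeneous system of parameters), and the Hochster--Eagon theorem ensures that $\KK[X]^{H_0}$ is Cohen--Macaulay. Both are standard even in the present graded-but-non-standard setting, since the variables $X_{\overline A}$ have prescribed positive degrees $|A|$ and $H_0$ permutes them as an action by graded algebra automorphisms; the classical statements transfer verbatim to this graded polynomial ring.

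For the algebraic dimension, I would note that $\KK[X]$ is a polynomial ring in as many indeterminates as there are non-trivial $H$-orbits of subsets of the support of $H$, and therefore has Krull dimension equal to this count. Since $H_0$ is finite, the extension $\KK[X]^{H_0} \subset \KK[X]$ is integral (even finite), so $\KK[X]^{H_0}$ has the same Krull dimension. By standard commutative algebra, this Krull dimension governs the growth of the Hilbert function, giving the asymptotic behaviour asserted.

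The main ``obstacle'' is not really an obstacle at this stage: everything reduces to Theorem~\ref{theorem.classification_finite_blocks} and Proposition~\ref{proposition.wreath_direct_orbit_algebra}, which do the heavy lifting. What does deserve a line of care is the fact that the variables $X_{\overline A}$ carry non-standard (positive, generally $>1$) degrees, so one should point out that the Hochster--Eagon theorem and the Krull-dimension computation are insensitive to the choice of positive grading provided the action of $H_0$ is by graded automorphisms, which is visibly the case here since $H_0$ permutes the $X_{\overline A}$ within each degree.
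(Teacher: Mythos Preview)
Your proposal is correct and matches the paper's approach exactly: the paper does not even spell out a proof, simply noting that the corollary follows immediately from the classification Theorem~\ref{theorem.classification_finite_blocks} together with Proposition~\ref{proposition.wreath_direct_orbit_algebra}, plus standard invariant theory for finite groups. Your added details (Hilbert's theorem, Hochster--Eagon, the integrality argument for Krull dimension, and the remark that the non-standard positive grading is harmless since $H_0$ permutes variables within each degree) are all correct and are precisely the kind of elaboration the paper leaves to the reader.
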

\begin{remark}
\label{remark.refined_bound}
  Until now, the lower bound provided by the nested block system evoked in 
  Remark~\ref{remark.lower_bound_from_nested}
  was calculated using Example~\ref{example.wreath_products} when it came to stable 
  \blocksofblocks in $\nested{G}$. 
  With the notations of this section, it was based on the (possibly infinite index) supergroup 
  $H_0\wreath \P$: namely, the provided lower bound for the algebraic dimension 
  was the cardinality of the age of $H_0$.
  Corollary~\ref{corollary.finite_blocks} hands a refinement of this bound, that
  is based on the subgroup $H\wreath \P$ and tight on the relevant 
  restriction of the group.
\end{remark}

In Section~\ref{section.classification_general_case} the strategy
to tackle a group $G$ with several \blocksofblocks will be to consider
the restrictions of $G$ on each of its \blocksofblocks, and patch
together their properties. This will use the following technical
corollary.

\begin{corollary}
  \label{corollary.minimal_subgroup_finite_blocks}
  Let $G$ be a closed $P$-oligomorphic permutation group such that
  $\nested{G}$ consists of a single \blockofblocks; write it as
  $G=\groupfiniteblocks[\P]{H_0}{H}$ using the classification
  of Theorem~\ref{theorem.classification_finite_blocks}, and
  let $\PM$ be the minimal finite index normal subgroup of $\P$.
  Then, any finite index normal subgroup $\tilde{G}$ of $G$ is of the form
  $[\tilde{H_0}, H^\infty, \tilde{M}]$, with
  $H \leq \tilde{H_0} \leq H_0$ and $\PM \leq \tilde{M} \leq \P$. In
  particular, $K=H\wreath \PM$ is the minimal finite index normal
  subgroup of $G$.
\end{corollary}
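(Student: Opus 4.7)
The plan is to apply Theorem~\ref{theorem.classification_finite_blocks} to $\tilde{G}$ itself, and then to pin down its three ingredients $\tilde{H} \triangleleft \tilde{H_0}$ and $\tilde{M}$ using the inclusion $\tilde{G} \subseteq G$ together with the finite-index hypothesis.

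By Lemma~\ref{lemma.finite_index_same_nested} (passing to the closure of $\tilde{G}$ if needed, which shares its profile and orbit algebra with $\tilde{G}$ by Lemma~\ref{lemma.completion_same_profile_and_orbits}), the nested block system of $\tilde{G}$ coincides with that of $G$: a single \blockofblocks on the same maximal finite blocks. Theorem~\ref{theorem.classification_finite_blocks} then yields a presentation $\tilde{G} = \groupfiniteblocks[\tilde{M}]{\tilde{H_0}}{\tilde{H}}$, where $\tilde{H} \triangleleft \tilde{H_0}$ are finite permutation groups on a single block and $\tilde{M}$ is a closed highly homogeneous group on the set of blocks. Comparing respectively the pointwise stabilizers of all-but-one block, the diagonal actions on a single block, and the actions on the set of blocks yields the inclusions $\tilde{H} \subseteq H$, $\tilde{H_0} \subseteq H_0$, and $\tilde{M} \subseteq \P$.

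The finite-index hypothesis then pins these inclusions down. By Lemma~\ref{lemma.subwreath}, both $H\wreath\P$ and $\tilde{H}\wreath\tilde{M}$ are of finite index in $G$ and $\tilde{G}$ respectively, hence both in $G$; since $\tilde{H}\wreath\tilde{M} \subseteq H\wreath\P$, this forces $\tilde{H}\wreath\tilde{M}$ to have finite index in $H\wreath\P$. The infinite-degree case of Corollary~\ref{corollary.subwreath_index} then forces $\tilde{H} = H$: otherwise the index would be infinite. Once $\tilde{H} = H$, the normality of $\tilde{G}$ in $G$ passes to normality of $\tilde{M}$ in $\P$ via the natural block action $G \to \P$, and Proposition~\ref{proposition.wreath_quotient} identifies the remaining index $[H\wreath\P : H\wreath\tilde{M}]$ with $[\P:\tilde{M}]$; Remark~\ref{remark.profile_one_fi_subgroups} then gives $\PM \subseteq \tilde{M}$. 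Finally, $\tilde{H_0} \supseteq \tilde{H} = H$ is immediate from $\tilde{H} \triangleleft \tilde{H_0}$.

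For the ``in particular'' assertion, minimizing the parameters in the allowed ranges gives $\tilde{H_0} = H$ (absorbing the diagonal factor into $H$) and $\tilde{M} = \PM$, hence the candidate $K = \groupfiniteblocks[\PM]{H}{H} = H\wreath\PM$. That $K$ is normal in $G$ follows from $H \triangleleft H_0$ and $\PM \triangleleft \P$, which make it invariant under conjugation by both the wreath and the diagonal generators of $G$; its index $[H_0:H]\cdot[\P:\PM]$ is finite. Since every finite-index normal subgroup of $G$ contains $K$ by the previous paragraph, $K$ is indeed minimal. The main obstacle, and the only non-formal step, is the argument forcing $\tilde{H} = H$: it encodes the intuition that any proper weakening of the within-block action would, through the infinitely many block-permutations of $\tilde{M}$, propagate into infinitely many independent constraints and hence infinite index, a fact that Corollary~\ref{corollary.subwreath_index} packages into a one-line index computation once $\tilde{G}$ has itself been classified.
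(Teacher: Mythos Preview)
Your proof is correct and follows essentially the same route as the paper: apply Lemma~\ref{lemma.finite_index_same_nested} to preserve the nested block system, reclassify $\tilde{G}$ via Theorem~\ref{theorem.classification_finite_blocks}, and then use Lemma~\ref{lemma.subwreath} together with Corollary~\ref{corollary.subwreath_index} to force $\tilde{H}=H$. You are in fact more thorough than the paper's proof, which stops at $\tilde{H}=H$ and leaves the bounds $\PM\leq\tilde{M}\leq\P$ and $H\leq\tilde{H_0}\leq H_0$, as well as the entire ``in particular'' clause about $K=H\wreath\PM$, to the reader; your explicit derivation of these via Remark~\ref{remark.profile_one_fi_subgroups} and the normality argument is a welcome addition.
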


\begin{proof}
  Since $\tilde{G}$ is of finite index, its nested block system is
  still equal to $\nested{G}$ by
  Lemma~\ref{lemma.finite_index_same_nested}, and its action on the
  maximal finite blocks is a normal subgroup of finite index of $\P$.
  Using the classification of
  Theorem~\ref{theorem.classification_finite_blocks}, $\tilde{G}$ is
  of the form
  $[\tilde{H_0}, \tilde{H}^\infty, \tilde{M}]$, with the expected
  group inclusions: $\tilde{H_0} \triangleleft H_0$,
  $\tilde{H} \triangleleft H$, and $\tilde{M}\triangleleft \P$.
  Lemma~\ref{lemma.subwreath} also states that it contains
  $\tilde{H}\wreath \tilde{M}$ as a finite index normal subgroup,
  while $G$ contains $H\wreath \P$ and thus $H\wreath \tilde{M}$ as
  finite index normal subgroups. Considering
  Lemma~\ref{corollary.subwreath_index}, we need to have
  $\tilde{H} = H$ for $\tilde{G}$ to be of finite index in $G$.
\end{proof}

\subsection{Action on the set of blocks}
\label{section.finite_blocks.action}

The sequel of Section~\ref{section.finite_blocks} is devoted to the
statement and proof of the two propositions used in the proof of
Theorem~\ref{theorem.classification_finite_blocks}. 

From now on, \textbf{we assume that $G$ acts on the set of finite
  blocks as $\P=\sg_\infty$}. The following two technical lemmas
strengthen this assumption by showing that, for an appropriate
enumeration of the elements within in each block, $G$ can permute the
blocks while preserving that enumeration.

\begin{lemma} \label{lemma.sympa} 
  Take any finite collection $(B_{i_1},\ldots,B_{i_k})$ of blocks; then
  $\fix_G (B_{i_1}, \ldots, B_{i_k})$ acts on the remaining blocks as $\sg_\infty$.
\end{lemma}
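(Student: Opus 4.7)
The plan is to carry out a straightforward lifting argument using the standing hypothesis of this subsection, namely that the canonical homomorphism $\pi\colon G \to \sym(\mathcal{B}^{<\infty})$ recording the action of $G$ on the finite blocks has image equal to $\P=\sg_\infty$, so that every permutation of the (denumerably many) blocks is realized by some element of $G$. What needs to be shown is that the restriction of $\pi$ to $\fix_G(B_{i_1},\ldots,B_{i_k})$ still surjects onto the symmetric group of the remaining blocks, which, since one removes only finitely many blocks from a denumerable set, is again $\sg_\infty$.

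Concretely, I would proceed as follows. Given any permutation $\tau$ of the blocks other than $B_{i_1},\ldots,B_{i_k}$, extend it to a permutation $\tilde{\tau}$ of the entire block system by declaring $\tilde{\tau}(B_{i_j})=B_{i_j}$ for each $j$. The hypothesis then furnishes some $g\in G$ with $\pi(g)=\tilde{\tau}$. Because $\tilde{\tau}$ fixes each $B_{i_j}$ as a ``point'' of the block system, $g$ setwise stabilizes each $B_{i_j}$ as a subset of $E$; hence $g$ lies in $\fix_G(B_{i_1},\ldots,B_{i_k})$, and its induced action on the remaining blocks coincides with $\tau$ by construction.

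I do not expect any substantial obstacle here. The whole content is the notational point that $\fix_G(B_{i_1},\ldots,B_{i_k})$ denotes the subgroup stabilizing each $B_{i_j}$ setwise (as a single object of the block system), rather than pointwise on $E$; once this is understood the lift is tautological, and no information about the action of $G$ \emph{inside} the individual blocks is required for this statement.
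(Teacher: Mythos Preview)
Your argument is correct for the \emph{setwise} stabilizer $\stabilizer_G(B_{i_1},\ldots,B_{i_k})$, but the lemma is about the \emph{pointwise} stabilizer. In this paper, $\operatorname{Fix}$ consistently denotes the pointwise stabilizer (see for instance the sentence ``Denote $N_1 = \fix_G (E_2)$ and $N_2 = \fix_G (E_1)$, the pointwise stabilizers of $E_2$ and $E_1$'' in Subsection~\ref{subsection.subdirect_and_synchro}, or the definition of the tower where $H_i = \fix_{S_{\blocksystem}}(B_1, \ldots, B_i)_{|B_{i+1}}$). The paper's own proof confirms this reading: it explicitly argues that $\fix_G(B_1,\ldots,B_k)$ is a normal subgroup of finite index in $\stabilizer_G(B_1,\ldots,B_k)$, which would be vacuous if the two symbols meant the same thing.

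So your last paragraph has the notational point exactly backwards, and this is not a harmless convention issue: the element $g$ you lift from $\tilde{\tau}$ need only stabilize each $B_{i_j}$ as a set, and may very well permute its elements nontrivially, so there is no reason for it to lie in $\fix_G(B_{i_1},\ldots,B_{i_k})$. The missing step is precisely the content of the lemma. The paper fills it as follows: since each $B_{i_j}$ is finite, $\fix_G(B_{i_1},\ldots,B_{i_k})$ has finite index in $\stabilizer_G(B_{i_1},\ldots,B_{i_k})$; hence its image on the remaining blocks is a finite index (normal) subgroup of $\sg_\infty$; and $\sg_\infty$ has no proper finite index subgroup (Remark~\ref{remark.profile_one_fi_subgroups}), so this image is all of $\sg_\infty$. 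Your lifting argument does provide the easy half---that the setwise stabilizer already surjects onto $\sg_\infty$ of the remaining blocks---but you still need this finite-index descent to conclude.
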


\begin{proof}
  Take $k$ in $\NN$.
  As $\fix_G (B_1, \ldots, B_k)$ is a normal subgroup of finite index
  of $\stabilizer_G(B_1,\ldots,B_k)$, it acts on the remaining blocks as a 
  subgroup of finite index of $\sg_{\infty}$, which may only be $\sg_\infty$ itself.
  By conjugation of the blocks, the same holds for any collection 
  $(B_{i_1}, \ldots, B_{i_k})$ of blocks.
\end{proof}

\begin{lemma}
\label{lemma.ladder}
  There exists an ordering $b_{1,i},\ldots b_{m,i}$ of the elements
  within each block $B_i$ such that (the closure of) $G$ contains
  $\Id_m \mathbin{\square} \sg_{\infty} = \Id_m \wreath \sg_{\infty}$ as a permutation subgroup.
\end{lemma}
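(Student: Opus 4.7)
The plan is to exhibit involutions $\tau_i \in G$ that swap $B_1$ with $B_i$ while fixing all other blocks pointwise, then to use these to simultaneously define the enumerations of the blocks and generate a diagonal $\sg_\infty$ inside $G$ (which is closed by hypothesis).

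For the existence of each $\tau_i$, I would first use Lemma~\ref{lemma.sympa} applied to the finite collection $\{B_k : k \leq N,\, k \neq 1, i\}$ for each $N$, together with a compactness/diagonal argument in the topology of simple convergence, to produce $t_i \in G$ that fixes every $B_k$ ($k \neq 1, i$) pointwise and swaps $B_1, B_i$ setwise. The main obstacle is upgrading $t_i$ to an involution. Since $t_i^2$ is block-preserving, fixes every $B_k$ ($k \neq 1, i$) pointwise, and restricts to some $(\pi_1, \pi_i) \in \sym(B_1) \times \sym(B_i)$, I must find $k \in G$ of the same ``inner'' type (block-preserving, fixing all $B_k$ with $k \neq 1, i$ pointwise) such that $k|_{B_1} = \Id$ and $k|_{B_i} = \pi_i^{-1}$; then $\tau_i := t_i \cdot k$ is an involution swapping $B_1$ and $B_i$ and fixing everything else pointwise. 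Existence of $k$ reduces to showing that the subgroup $N_{1i}$ of $G$ of such inner elements projects \emph{independently} onto its actions on $B_1$ and on $B_i$. I would establish this by applying Lemma~\ref{lemma.sympa} iteratively: first fixing $B_i$ pointwise to control the $B_1$-action, then fixing $B_1$ pointwise to control the $B_i$-action, then combining via another compactness argument to get a single element fixing all other blocks pointwise.

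With the $\tau_i$'s at hand, fix any enumeration $b_{1,1}, \ldots, b_{m,1}$ of $B_1$ and set $b_{r,i} := \tau_i(b_{r,1})$ for $i \geq 2$. Since $\tau_i$ is an involution, it realizes the diagonal transposition of $B_1$ and $B_i$ in these enumerations. A direct calculation then shows that the conjugate $\tau_i \tau_j \tau_i$ (for $i, j \geq 2$) realizes the diagonal transposition of $B_i$ and $B_j$: it sends $b_{r,i} \mapsto b_{r,1} \mapsto b_{r,j} \mapsto b_{r,j}$ and $b_{r,j} \mapsto b_{r,j} \mapsto b_{r,1} \mapsto b_{r,i}$, while fixing $B_1$ and all $B_k$ ($k \notin \{1, i, j\}$) pointwise. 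Since the transpositions $(1, i)$ and $(i, j)$ generate $\sg_\infty$, the subgroup of $G$ generated by the $\tau_i$'s contains $\Id_m \mathbin{\square} \sg_\infty = \Id_m \wreath \sg_\infty$, as required. The hard part is the structural analysis of $N_{1i}$ in the second paragraph, which is precisely where the hypothesis that $G$ is $P$-oligomorphic with a single superblock acting as $\sg_\infty$ on the blocks enters in an essential way.
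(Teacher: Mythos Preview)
Your first and third paragraphs are fine; the gap is in the second. You claim that $N_{1i}$ projects \emph{independently} onto $\sym(B_1)$ and $\sym(B_i)$, and propose to deduce this from Lemma~\ref{lemma.sympa}. But Lemma~\ref{lemma.sympa} only controls the action of pointwise stabilizers on the \emph{set of remaining blocks} (it is still $\sg_\infty$); it says nothing about the action \emph{within} a fixed block, which is precisely what you need to produce your corrector $k$. Concretely, the existence of $k$ with $k|_{B_1}=\Id$ and $k|_{B_i}=\pi_i^{-1}$ amounts to knowing that the group $\fix_{S_\blocksystem}(B_1,\text{all }B_k\text{ with }k\neq 1,i)|_{B_i}$ already contains $\pi_i$ --- essentially that the tower of $G$ has stabilised by the time one block is fixed. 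In the paper's logical order this is Proposition~\ref{proposition.tower}, whose proof \emph{uses} Lemma~\ref{lemma.ladder}; so your route is circular as stated. (The independence claim does turn out to be true a posteriori, once the classification is in hand, but you cannot invoke it here.)

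The paper avoids this trap by a different device: instead of trying to correct each $t_i$ to an involution, it notes that the restriction $t_i|_{B_1\cup B_i}$ can take only finitely many values (up to the obvious relabelling of $B_i$), so by pigeonhole there exist $i\neq j$ with $t_i$ and $t_j$ ``equivalent''. The triple product $t_i\,t_j^{-1}\,t_i$ then swaps $B_i$ and $B_j$ with an order-$2$ restriction to $B_i\cup B_j$, while only possibly disturbing $B_1$; one further conjugation-and-limit step pushes the disturbed block off to infinity and yields a genuine involutive swap. This pigeonhole trick is the missing idea, and it sidesteps entirely the need to understand $N_{1i}$.
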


\begin{proof}
  Since $G$ acts
  by $\sg_\infty$ on the blocks, there exists for each $i>1$ a permutation
  $\tau_{1,i}^{(0)} \in G$ that swaps $B_1$ and $B_i$ and
  stabilizes all the other blocks. Take now $k\geq 0$; using
  Lemma~\ref{lemma.sympa} there exists a permutation $\tau_{1,i}^{(k)}$
  that not only swaps $B_1$ and $B_i$, but also fixes all the
  (other) blocks in $B_2,\ldots,B_k$.

  Take an infinite sequence
  $\tau_{1,i}^{(0)},\ldots,\tau_{1,i}^{(k)},\ldots$~. Noting that there are
  only finitely many possibilities for the restriction of $\tau_{1,i}^{(k)}$
  to $B_1\cup B_i$, we can extract a subsequence with
  always the same restriction. Thus, using the property of closure, there exists in 
  $G$ a permutation $\tau_{1,i}$ which swaps $B_1$
  and $B_i$ and fixes all the other blocks. This permutation need not
  be of order 2 though.

  Say that $\tau_{1,i}$ and $\tau_{1,j}$ are equivalent if
  their restrictions to $B_1 \cup B_i$ and $B_1\cup B_j$
  coincide up to renaming the elements of $B_i$ (or $B_j$).

  Now consider the map $i \mapsto \tau_{1,i}$. It takes finitely many
  values, and therefore there exists $i$ and $j$ such that
  $\tau_{1,i}$ and $\tau_{1,j}$ are equivalent. Define
   $\tau'_{i,j} = \tau_{1,i} \tau_{1,j}^{-1} \tau_{1,i}\,$.

  \input{\figures figure_proof_finite_blocks1}
  
  Now check that
  \begin{itemize}
  \item $\tau'_{i,j}$ swaps $B_i$ and $B_j$ ``straightforwardly'': that is
    its restriction to $B_i\cup B_j$ is of order 2 (see
    Figure~\ref{figure.proof_finite_blocks1});
  \item $\tau'_{i,j}$ stabilizes $B_1$;
  \item $\tau'_{i,j}$ fixes all the other blocks (pointwise).
  \end{itemize}

  We may then conjugate $\tau'_{i,j}$ to stabilize some block $B_k$ instead 
  of $B_1$, with $k$ as large as desired, and still swap straightforwardly 
  $B_i$ and $B_j$ while fixing the remaining blocks.

  Therefore there exists in $G$, which we recall is assumed to be closed, 
  a permutation
  $\tau_{i,j}$ of order $2$ that swaps $B_i$ and $B_j$ and fixes all
  the other blocks. By conjugation, we can find for each $n$ a
  similar permutation $\tau_n$ swapping $B_n$ and $B_{n+1}$.

  Choose an arbitrary ordering $b_{1,1},\ldots,b_{m,1}$ of $B_1$.
  Define the ordering $b_{1,2},\ldots,b_{m,2}$ of $B_2$ so that
  $\tau_1$ is the trivial swap, meaning that it swaps $b_{1,r}$ and $b_{2,r}$
  for each $r$. Proceed similarly to order the elements of $B_3$ so
  that $\tau_2$ is the trivial swap, and so on 
  (Figure~\ref{figure.proof_finite_blocks2} shows the stage $k-1$).

  \input{\figures figure_proof_finite_blocks2}  

  Conclusion: the $\tau_n$'s generate $\Id_m \mathbin{\square} \sg_\infty$ as a
  permutation subgroup of $G$, as desired.
\end{proof}

\subsection{Towers and their classification}
\label{section.finite_blocks.tower}

While the previous subsection dealt with the way the finite blocks could
permute, this subsection is going to focus on what can happen within the 
blocks when they do not permute (the results obtained above state that
the actions on and within the blocks can be decorrelated anyway).

\begin{definition}
  Let $S_{\blocksystem} = S_{\blocksystem}^G = \stabilizer_G(\blocksystem)$ be the
  kernel of the morphism that maps $G$ onto its induced action on the set
  of blocks, and, for $i\geq 0$, set
  $H_i = H_i^G = \fix_{S_{\blocksystem}}(B_1, \ldots, B_i)_{|B_{i+1}}$. 
  We call the sequence $H_0, H_1, H_2, \ldots$ the \textbf{tower} of
  $G$ with respect to the block system $\blocksystem$. The groups $H_i$ are
  considered up to a permutation group isomorphism.
\end{definition}

\begin{remark}
  \label{remark.tower_indep_of_order}
\begin{itemize}
  \item By conjugation, using Lemma~\ref{lemma.sym_on_finite_blocks}, 
  the tower does not depend on the ordering $B_1,B_2,\dots$ of the
  blocks. In others words, $H_i$ can be obtained by fixing (pointwise)
  any $i$ blocks and taking the restriction to any other block.
\item Up to permutation group isomorphisms, the sequence $H_0,H_1,\dots$ forms a
  weakly decreasing chain of subgroups of $\sym(\{1, 2, \ldots, m\}$), where
  $m$ is the cardinality of the blocks. Take indeed an arbitrary block
  $B'_0$ and label its elements by $\{1, 2, \ldots, m\}$; then define
  each $H_i$ as $\fix_{S_{\blocksystem}}(B'_1, \ldots, B'_i)_{B'_0}$
  where $B'_1,\dots,B'_i$ are $i$ arbitrary distinct blocks. In
  addition, each $H_{i+1}$ is normal in $H_i$.
\end{itemize}
\end{remark}

The above definition and remark also apply
to a permutation group of a finite set, as long as it acts on the
(finitely many) blocks as the full symmetric group.

\begin{example}[Fundamental examples]
  Let $H$ be a finite permutation group. The tower of
  $H \wr \sg_{\infty}$ (resp. $H \mathbin{\square} \sg_{\infty}$) for its natural
  block system is $H, H, H \cdots$ (resp. $H, \Id, \Id \cdots$).
  The tower of $\groupfiniteblocks{H_0}{H}$ is $H_0, H, H, H \cdots$ .
\end{example}

\begin{center}
\vspace{.5cm}
\begin{tikzcd}[column sep = tiny, row sep = tiny]
  H_0 \wreath \sg_\infty \ar[d, dash] & \longleftrightarrow ~~ H_0,~ H_0,~ H_0,~ H_0 ~ \cdots \\
  \vdots \ar[d, dash] & \\
  \groupfiniteblocks{H_0}{H} \ar[ddd, dash, "< \infty"] 
  & \longleftrightarrow 
  ~~ H_0,~ H_{{\color{white}0}},~ H_{{\color{white}0}},~ H_{{\color{white}0}} ~ \cdots \\
   & \\
   & \\
  H \wreath \sg_\infty \ar[d, dash] 
  & \longleftrightarrow 
  ~~ H_{{\color{white}0}},~ H_{{\color{white}0}},~ H_{{\color{white}0}},~ H_{{\color{white}0}} ~ \cdots \\
  \vdots \ar[d, dash] & \\
  H \mathbin{\square} \sg_\infty & \longleftrightarrow ~~ H,~ \Id,~ \Id,~ \Id ~ \cdots \\
\end{tikzcd}
\end{center}

We aim to prove that these are the only possibilities for a tower (so 
there is actually only one prototype of a tower, since the first two
examples are a specialization of the third one).

\begin{proposition}
  \label{proposition.tower}
  Let $G$ be a closed $P$-oligomorphic permutation group with $\nested{G}$
  consisting of a single \blockofblocks.
  Then, the tower of $G$ has the form $H_0, H, H, H \cdots$, where
  $H_0$ is a finite permutation group and $H$ is a normal subgroup of $H_0$.
\end{proposition}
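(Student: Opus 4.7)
The plan is to prove two assertions separately: first, that the tower collapses after the initial step, i.e.\ $H_i = H_1$ for all $i\ge 1$; and second, that the common value $H := H_1$ is normal in $H_0$. Both steps exploit the trivial block swaps provided by Lemma~\ref{lemma.ladder}, together with the fact that $S_{\blocksystem}$ is normal in $G$ so that conjugation by such a swap stays inside $S_{\blocksystem}$. Once the labeling of Lemma~\ref{lemma.ladder} is fixed, I will write each element of $S_{\blocksystem}$ as a tuple $(x|_{B_1}, x|_{B_2}, \dots) \in H_0^{\NN}$; the finiteness of $H_0$ is automatic since the blocks are finite.

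For the first step, pick $g \in H_1$. By Remark~\ref{remark.tower_indep_of_order}, there exists $x \in S_{\blocksystem}$ of the form $x = (\id, g, a_3, a_4, \dots)$. Let $\sigma = \tau_{1,2}$ be the trivial swap of $B_1$ and $B_2$ provided by Lemma~\ref{lemma.ladder}. Since $\sigma$ acts as the identity within each of $B_1$ and $B_2$ and fixes $B_k$ pointwise for $k \ge 3$, conjugation by $\sigma$ merely transposes the first two entries of the tuple, so $\sigma x \sigma^{-1} = (g, \id, a_3, a_4, \dots)$. The commutator
\[
y := \sigma x \sigma^{-1} \, x^{-1} = (g,\, g^{-1},\, \id,\, \id,\, \dots)
\]
therefore lies in $S_{\blocksystem}$, fixes every block $B_k$ with $k \ge 3$ pointwise, and acts on $B_1$ as $g$. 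By Remark~\ref{remark.tower_indep_of_order}, this realizes $g$ as an element of $H_i$ for every $i \ge 1$: choose any $i$ blocks among $B_3, B_4, \dots$ to play the pointwise-fixed role and $B_1$ as the acted-on block. The reverse inclusion $H_i \subseteq H_1$ is automatic, so $H_i = H_1 =: H$.

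For the second step, let $h \in H_0$ and $g \in H$. Pick $z \in S_{\blocksystem}$ with $z|_{B_1} = h$ (which exists by definition of $H_0$) and the element $y$ constructed above. Conjugation yields
\[
z y z^{-1} = \bigl(hgh^{-1},\ z|_{B_2}\, g^{-1}\, z|_{B_2}^{-1},\ \id,\ \id,\, \dots \bigr) \in S_{\blocksystem},
\]
an element that fixes $B_3, B_4, \dots$ pointwise while acting on $B_1$ as $hgh^{-1}$. One more application of Remark~\ref{remark.tower_indep_of_order} forces $hgh^{-1} \in H_1 = H$, which proves $H \triangleleft H_0$.

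The heart of the proof is the commutator identity of the first step, which simultaneously transports the nontrivial action $g$ from $B_2$ to $B_1$ and wipes out the parasitic actions $a_k$ on the higher blocks by pointwise cancellation. What makes the cancellation go through is that, under the coherent labeling supplied by Lemma~\ref{lemma.ladder}, the trivial swap $\sigma$ acts literally as the identity within $B_1$ and $B_2$, so $\sigma x \sigma^{-1}$ leaves the $a_k$ unchanged for $k \ge 3$. The genuine technical difficulty therefore sits in Lemma~\ref{lemma.ladder} itself (already established using the closedness of $G$) rather than in the present argument, which then reduces to two short tuple computations.
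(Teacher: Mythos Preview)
Your proof is correct and uses the same commutator-with-a-trivial-swap trick as the paper's own argument. The only cosmetic differences are that the paper restricts to four consecutive blocks at a time (showing $H_{i+1}=H_{i+2}$ for each $i$) rather than working globally as you do, and it defers the normality $H\triangleleft H_0$ to Remark~\ref{remark.tower_indep_of_order} rather than spelling it out.
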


\begin{proof}
  
  Consider, for any $i \in \NN$, the restriction $G_i$ of 
  $\text{Fix}_G(\cup_{j<i}B_j)$
  to the four next blocks. The tower of this permutation group (for a natural
  extension of the notion to finite groups of the adequate shape) is
  $H_i, H_{i+1}, H_{i+2}, H_{i+3}$. We aim to show that $H_{i+1} = H_{i+2}$,
  which will conclude the proof.
  
  An element $s$ of the blockwise stabilizer $S_i$ of $G_i$ is 
  determined by its action on each block, which we write as a quadruple.
  Let $g$ be an element of $H_{i+1}$. Then $S_i$ has an element $x$ 
  that may be written $(1, g, h, l)$, with $h$ and $l$ also in $H_{i+1}$.
  Let $\sigma$ be an element of $G_i$ that permutes ``straightforwardly'' the first 
  two blocks and fixes the other two (Lemma~\ref{lemma.ladder} states that
  such an element actually exists).
  By conjugating $x$ with $\sigma$ in $G_i$, we get an element $y$ in $S_i$
  that we may write $(g, 1, h, l)$, so that $x^{-1}y = (g, g^{-1}, 1, 1)$.
  Hence, using Remark~\ref{remark.tower_indep_of_order}, $g$ is actually in $H_{i+2}$.

\end{proof}

\subsection{Lifting of the classification from towers to groups}
\label{section.finite_blocks.lift}

The two results to follow will show that $G$ is uniquely defined by 
its tower, by first recovering the blockwise stabilizer of the group from the
tower, and then using the result of ``straightforward'' permutation we
proved in Lemma~\ref{lemma.ladder}.

\begin{lemma}
\label{lemma.same_tower_same_stab}
The tower of $G$ w.r.t. $\blocksystem$ uniquely determines its blockwise
stabilizer $S_{\blocksystem}$.
\end{lemma}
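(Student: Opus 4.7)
The plan is to show that $S_{\blocksystem}$ must coincide with the natural candidate suggested by the tower, namely
\[
T \;=\; \{(g_i)_i \in H_0^\infty \suchthat g_i H = g_j H \text{ for all } i,j\},
\]
after identifying each $B_i$ with a common model via the coherent labelling furnished by Lemma~\ref{lemma.ladder}. Since $T$ is built entirely out of the data $(H_0, H)$, equality $S_{\blocksystem} = T$ yields the lemma.

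First I would analyse the projection $P_{ij}$ of $S_{\blocksystem}$ onto $H_0 \times H_0$ at any two blocks $B_i$, $B_j$. By construction the image of $S_{\blocksystem}$ on each block is $H_0$, and the kernel of the restriction to $B_i$ projects onto $B_j$ as $H_1 = H$ (and symmetrically, using Remark~\ref{remark.tower_indep_of_order}). Hence $P_{ij}$ is a subdirect product of $H_0 \times H_0$ whose two kernels both equal $H$, so there is an automorphism $\psi$ of $H_0/H$ with
\[
P_{ij} \;=\; \{(a,b) \in H_0 \times H_0 \suchthat bH = \psi(aH)\}.
\]
Conjugation by the straightforward swap $\tau_{ij}$ supplied by Lemma~\ref{lemma.ladder} preserves $S_{\blocksystem}$ (which is normal in $G$ as the kernel of the action on blocks) while exchanging the two coordinates of $P_{ij}$, forcing $\psi^2 = \id$. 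Comparing the three projections $P_{ij}$, $P_{ik}$, $P_{jk}$ for a triple $B_i, B_j, B_k$ then gives $\psi(g_2H) = g_2H$ for every coset, whence $\psi = \id$ and $P_{ij} = \{(a,b) : aH = bH\}$.

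It will remain to promote this pairwise description to all finite tuples of blocks and finally to the whole infinite product. A straightforward induction on $k$ handles the former: if $P_{1,\dots,k} = \{(g_1,\dots,g_k) : g_iH \text{ all equal}\}$ is known, then a tuple $(g_1,\dots,g_{k+1})$ with all $g_iH$ equal to some coset $\gamma$ is realised by first lifting $(g_1,\dots,g_k)$ to an element $s_0 \in S_{\blocksystem}$ and then adjusting $s_0$ by an element of $\fix_{S_{\blocksystem}}(B_1,\dots,B_k)$, whose restriction to $B_{k+1}$ covers all of $H_k = H$, giving access to every element of $\gamma$ on $B_{k+1}$. For the last step, $S_{\blocksystem}$ is closed in $\sym(E)$ because $G$ is closed and being a blockwise stabiliser is a closed condition; such a closed subgroup is fully determined by its projections to finite unions of blocks, yielding $S_{\blocksystem} = T$. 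The crux of the argument, and the main obstacle, is pinning down that $\psi = \id$: this is what genuinely uses the compatible labelling of the blocks from Lemma~\ref{lemma.ladder}, without which different closed $P$-oligomorphic groups sharing a tower could in principle realise different synchronisation patterns between blocks.
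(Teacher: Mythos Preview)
Your argument is correct and in fact proves more than the paper does: you explicitly identify $S_{\blocksystem}$ with the concrete group $T$, whereas the paper only shows that $S_{\blocksystem}$ is \emph{determined} by the tower without saying what it is. The paper's proof is a recursion on the auxiliary family $H_{k,\ell}$ (the restriction to $\ell$ blocks of the fixator of $k$ others): each $H_{k,\ell}$ with $\ell>1$ is written as a subdirect product of groups $H_{k',\ell'}$ with $\ell'<\ell$, and a subdirect product is determined by its factors and kernels (Proposition~\ref{prop.subdirect_characteristic}); the base case $H_{k,1}=H_k$ anchors everything to the tower, and closure handles the passage to infinitely many blocks. Crucially, the paper's route uses neither Lemma~\ref{lemma.ladder} nor Proposition~\ref{proposition.tower}: it applies to an arbitrary tower $(H_i)_i$, not only one of shape $H_0,H,H,\dots$. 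Your approach trades that generality for explicitness, leaning on both of those results---the coherent labelling is what makes the statement ``$\psi=\id$'' meaningful, and $H_k=H$ is what drives the induction on $k$. One step worth tightening: the three-block argument for $\psi=\id$ should first note (by conjugating with a straightforward swap that fixes a third block) that all pairwise automorphisms $\psi_{ij}$ agree with a single $\psi$; only then does the cocycle relation coming from a triple of blocks read $\psi=\psi\circ\psi$, forcing $\psi=\id$.
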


\begin{proof}
  Let $(H_i)_i$ be the tower of $G$ w.r.t. $\blocksystem$ and
  $S_{\blocksystem}$ be the blockwise stabilizer of $\blocksystem$.

  Using that $S$ is closed, it is sufficient to prove that, for any
  $l\geq 0$, the restriction $S_\ell$ of $S$ to the first $\ell$ blocks is
  determined by the tower; or equivalently to any $\ell$ blocks (recall
  that the order of the blocks is irrelevant). To this end, we will show
  that $S_\ell$ admits an expression that involves only explicit subdirect products 
  and the $H_i$'s (which will do the job thanks to 
  Proposition~\ref{prop.subdirect_characteristic}).

\begin{tikzpicture}[scale=.3]
  \draw[draw=violet, very thick](0,0) ellipse (1.7cm and 4cm);
  \draw[draw=violet, very thick](4,0) ellipse (1.7cm and 4cm);
  \draw[draw=violet, very thick](8,0) ellipse (1.7cm and 4cm);
  \draw[draw=violet, very thick](12,0) ellipse (1.7cm and 4cm);
  \draw[draw=violet, very thick](16,0) ellipse (1.7cm and 4cm);
  \fill[color=violet!10, pattern=north east lines, opacity=.3] (0,0) ellipse (1.7cm and 4cm);
  \fill[color=violet!10, pattern=north east lines, opacity=.3] (4,0) ellipse (1.7cm and 4cm);
  \fill[color=violet!10, pattern=north east lines, opacity=.3] (8,0) ellipse (1.7cm and 4cm);
  \fill[color=violet!10, pattern=north east lines, opacity=.3] (12,0) ellipse (1.7cm and 4cm);
  \fill[color=violet!10, pattern=north east lines, opacity=.3] (16,0) ellipse (1.7cm and 4cm);
  \draw[draw=red, very thick](20,0) ellipse (1.7cm and 4cm);
  \draw[draw=red, very thick](24,0) ellipse (1.7cm and 4cm);
  \draw[draw=red, very thick](28,0) ellipse (1.7cm and 4cm);
  \draw (32,0) ellipse (1.7cm and 4cm);
  \draw (36,0) ellipse (1.7cm and 4cm);
  \draw (40,0) ellipse (1.7cm and 4cm);
  \node (dots) at (45, 0){$\cdots$};
  \draw[>=stealth, <->] (-2,-5) -- (17.5,-5);
  \node (k) at (8,-6){\color{violet} $k$}; 
  \draw[>=stealth, <->] (18.5,-5) -- (30,-5);
  \node (k) at (24,-6){\color{red} $\ell$}; 
  \coordinate (P1) at ($(24.1, 5.5) + (-40:6cm and 0.8cm)$);
  \draw[>=stealth,->] (P1) arc (-40:290:6cm and 0.8cm);
  \node (gp) at (24,7.2){$H_{k,\ell}$};
\end{tikzpicture}

  In order to proceed by induction on $\ell$, we consider the larger family
  $(H_{k,\ell})_{k\geq 0,\ell>0}$, where $H_{k,\ell}$ is the restriction on
  $l$ blocks of the fixator of $k$ other blocks in $S$. Of course, 
  $H_{0,\ell} = S_\ell$.

  First, note that we have $H_{k,1}=H_k$ for all $k$. 
  This gives the base case for the induction. 
  We now take $\ell>1$, and express $H_{k,\ell}$ as a subdirect product involving
  only $H_{k',\ell'}$ with $\ell'<\ell$ (and incidentally also $k'+\ell'\leq k+\ell$).

  Write $l=\ell_1+\ell_2$ with $\ell_1>0$ and $\ell_2>0$, partition the $\ell$ blocks
  into $\ell_1$ and $\ell_2$ blocks, and let $E_1$ and $E_2$ be their respective
  union. Considering the action of $H_{k,\ell}$ on $E_1$ and $E_2$ provides
  the desired expression:
  \begin{displaymath}
    H_{k,\ell} = \operatorname{Subdirect}((G_1, G_2), (N_1,N_2))\,,
  \end{displaymath}
  where:
  \begin{alignat*}{2}
    G_1 &= \restrict{H_{k,\ell}}{E_1}              &&= H_{k,\ell_1}      \\
    G_2 &= \restrict{H_{k,\ell}}{E_2}              &&= H_{k,\ell_2}      \\
    N_1 &= \fix_{H_{k,\ell}}(E_2)_{|E_1} &&= H_{k+\ell_1, \ell_2}  \\
    N_2 &= \fix_{H_{k,\ell}}(E_1)_{|E_2} &&= H_{k+\ell_2, \ell_1}   \qedhere
  \end{alignat*}
\end{proof}

\begin{remark}
  If desired, more explicit formulae can be obtained, by imposing the partition.
  For instance, even splittings have the pleasant property that $G_1=G_2$ and 
  $N_1 = N_2$, which allows to illustrate the process by a binary tree:
  following is for example a recursion tree to express $H_{0,8}$ as a subdirect product
  of $H_i$'s, assuming that the left (resp. right) hand child of a group is the $G_i$ 
  (resp. $N_i$) of the subdirect product making this group (so a group is
  determined by its two children). 
  This recursion tree generalizes immediately to any
  $H_{0,2^n}$, which is sufficient to retrieve $S$ by closure.
  \begin{tikzpicture}[level distance=1.5cm,
    level 1/.style={sibling distance=7cm},
    level 2/.style={sibling distance=3.5cm}
    , level 3/.style={sibling distance=2cm}
    ]
    \node {$H_{0,8}$}
    child {node {$H_{0,4}$}
      child {node {$H_{0,2}$}
      	child {node {$H_{0,1}=H_0$}}
      	child {node {$H_1$}}
      }
      child {node {$H_{2,2}$}
      	child {node {$H_2$}}
      	child {node {$H_3$}}}
    }
    child {node {$H_{4,4}$}
      child {node {$H_{4,2}$}
      	child {node {$H_4$}}
      	child {node {$H_5$}}}
      child {node {$H_{6,2}$}
      	child {node {$H_6$}}
      	child {node {$H_{7,1}=H_7$}}}
    };
  \end{tikzpicture}
\end{remark}

\begin{proposition}
  \label{proposition.finite_blocks_structure}
  The permutation group $G$ is the natural semidirect product of its
  blockwise stabilizer $S_{\blocksystem}$ and
  $L=\Id_m \mathbin{\square} \sg_\infty$. In particular, it is uniquely defined
  by its tower w.r.t. $\blocksystem$.
\end{proposition}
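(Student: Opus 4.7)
The plan is to decompose the proof into verifying the four standard conditions that exhibit $G$ as the internal semidirect product $S_{\blocksystem} \rtimes L$, and then to invoke Lemma~\ref{lemma.same_tower_same_stab} to conclude uniqueness from the tower. Throughout, I would fix once and for all the enumeration $b_{1,i}, \ldots, b_{m,i}$ of each block $B_i$ provided by Lemma~\ref{lemma.ladder}, so that $L = \Id_m \mathbin{\square} \sg_\infty$ is a well-defined subgroup of $G$ (using that $G$ is closed).

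First I would record that $S_{\blocksystem}$ is normal in $G$ by construction, being the kernel of the morphism $G \to \sg_\infty$ describing the action on the set of blocks, and that $L \leq G$ by Lemma~\ref{lemma.ladder}. Next I would verify that $S_{\blocksystem} \cap L = \{1\}$: an element of $L$ is by construction fully determined by its induced permutation of the blocks (within each block, it acts trivially in the chosen ordering), so if it fixes every block setwise it must be the identity. Then I would verify $G = S_{\blocksystem} \cdot L$: given $g \in G$, let $\pi \in \sg_\infty$ be its induced action on the blocks; since $L$ surjects onto $\sg_\infty$ under the block quotient, pick $\ell \in L$ with induced action $\pi$ and observe that $g \ell^{-1}$ belongs to the kernel $S_{\blocksystem}$, yielding the decomposition $g = (g \ell^{-1})\, \ell$.

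Together with the normality of $S_{\blocksystem}$, these three conditions give the desired internal semidirect product $G = S_{\blocksystem} \rtimes L$. The conjugation action of $L$ on $S_{\blocksystem}$ is ``natural'' in the sense that it simply permutes the block-coordinates of an element of $S_{\blocksystem}$, because $L$ consists of straightforward block swaps relative to the fixed enumeration. For the uniqueness statement, Lemma~\ref{lemma.same_tower_same_stab} recovers $S_{\blocksystem}$ from the tower, while $L$ depends only on the block size $m$ and acts on $S_{\blocksystem}$ by the canonical coordinate permutation; hence $G$ itself is reconstructed from the tower alone.

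I do not expect a substantial obstacle here: the hard work was done in Lemmas~\ref{lemma.ladder} and~\ref{lemma.same_tower_same_stab}, and what remains is essentially a clean split-exact-sequence verification. The one subtlety to watch is consistent bookkeeping with the block enumeration from Lemma~\ref{lemma.ladder}, so that the conjugation of $S_{\blocksystem}$ by $L$ really is coordinate permutation and nothing more; this makes the semidirect product ``natural'' and, together with Lemma~\ref{lemma.same_tower_same_stab}, justifies the uniqueness claim.
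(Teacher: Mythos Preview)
Your argument is correct and, if anything, cleaner than the paper's. Both proofs rest on the same two lemmas (Lemma~\ref{lemma.ladder} for $L\le G$ and Lemma~\ref{lemma.same_tower_same_stab} for recovering $S_{\blocksystem}$ from the tower), but they assemble the pieces differently. The paper proceeds by finite approximation: for each $k$ it restricts to $B_1\cup\cdots\cup B_k$, uses a cardinality count (the restriction of $G$ has size $|\sg_k|\cdot|\stabilizer_G(B_1,\ldots,B_k)|$) to identify this restriction with the group generated by the restrictions of $L$ and $S_{\blocksystem}$, and then invokes closedness of $G$ to pass to the limit. You instead verify the four internal semidirect product conditions directly, which is shorter and more transparent; the only place closedness enters is to ensure that $L$ really is all of $\Id_m\mathbin{\square}\sg_\infty$ (not just its finitary part), so that every block permutation $\pi$ is realized by some $\ell\in L$. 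You implicitly use this when you write ``$L$ surjects onto $\sg_\infty$''; it would be worth making that explicit, since Lemma~\ref{lemma.ladder} only literally produces the adjacent swaps $\tau_n$. Otherwise the argument is complete, and your observation that conjugation by $L$ is precisely coordinate permutation is exactly what makes the semidirect product ``natural'' and the uniqueness claim go through.
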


\begin{proof}
  Use first Lemma~\ref{lemma.ladder} to state that $G$ contains
  $L=\Id_m \mathbin{\square} \sg_\infty$ (for ``ladder'') as a permutation subgroup.
  
  Take $k>0$, the stabilizer $\stabilizer_{\tilde{G}} (B_1,\ldots,B_k)$ of the
  first $k$ blocks is isomorphic to $\stabilizer_{G} (B_1,\ldots,B_k)$
  by Lemma~\ref{lemma.same_tower_same_stab}.

  Now, the group generated by $L_{|B_1 \cup \cdots \cup B_k}$
  and $\stabilizer_G (B_1,\ldots,B_k)_{|B_1 \cup \cdots \cup B_k}$ is 
  a subgroup of the restriction of $G$ (actually of
  $\stabilizer_G (B_1 \cup \cdots \cup B_k)$)
  to the same domain. Moreover, the latter is of size 
  $|\sg_k|.|\stabilizer_G (B_1,\ldots,B_k)|$ 
  (consider the morphism that projects onto the action on the blocks);
  therefore the two groups are equal.

  Finally, since $G$ acts on the blocks as the symmetric group, which is the
  closure of the group of all finitely supported permutations, each of its 
  elements is also the simple limit of a sequence of finitely supported permutations.
  Since we just showed that the restrictions to any finite number of blocks
  are uniquely defined by the tower, so is the whole (closed) group $G$.
\end{proof}

This provides the final piece of the proof of the classification
theorem in the case of a single superblock, and we may now move on to
the general case.

\section{Classification of (closed) $P$-oligomorphic groups}
\label{section.decoupage_recollage}
\label{section.classification_general_case}

Let $G$ be a closed $P$-oligomorphic group. In
Subsection~\ref{minimal_finite_subgroup} we exploit the results from 
Subsection~\ref{section.canonical} on the blocks systems of $G$ and the
classification of closed $P$-oligomorphic groups with one single
\blockofblocks of Section~\ref{section.finite_blocks} to give a
constructive description of the minimal finite index subgroup $K$ of
$G$. This subgroup is the first piece of the classification of $G$.

In Subsection~\ref{subsection.diagonal}, we explicit $G$ as a
semidirect product of $K$ and a finite permutation group acting
diagonally on its nested block system.

Finally, in Subsection~\ref{subsection.classification}, we classify 
(closed) $P$-oligomorphic groups, using a finite data structure involving
some finite group and a choice of decorated block system for it. We
first handle the case when $G$ does not act as $\Rev(\QQ)$ or $\Rev(\QQ/\ZZ)$
on any of its \blockofblocks, and then show how to be inclusive.

\subsection{The minimal finite index subgroup}
\label{minimal_finite_subgroup}

\begin{center}
\begin{tikzpicture}[scale=.5]
\foreach \a in {0,1,...,10}{
  \draw (2*\a,12) node[scale=.5,circle, fill=black!60]{} ;
  \draw (2*\a,11) node[scale=.5,circle, fill=black!60]{} ;
  \draw (2*\a,10) node[scale=.5,circle, fill=black!60]{} ;
  \draw (2*\a,8) node[scale=.5,circle, fill=black!60]{} ;
  \draw (2*\a,7) node[scale=.5,circle, fill=black!60]{} ;
  \draw (2*\a,5) node[scale=.5,circle, fill=black!60]{} ;
  \draw (2*\a,4) node[scale=.5,circle, fill=black!60]{} ;
  \draw (2*\a,2) node[scale=.5,circle, fill=black!60]{} ;
  \draw (2*\a,0) node[scale=.5,circle, fill=black!60]{} ;
	\draw[draw=blue!80, thick] (2*\a+.4,10) -- (2*\a+.4,12);
	\draw[draw=blue!80, thick] (2*\a+.4,12) arc (0:180:.4);
	\draw[draw=blue!80, thick] (2*\a-.4,12) -- (2*\a-.4,10);
	\draw[draw=blue!80, thick] (2*\a-.4,10) arc (180:360:.4);
	
	\draw[draw=blue!80, thick] (2*\a+.4,7) -- (2*\a+.4,8);
	\draw[draw=blue!80, thick] (2*\a+.4,8) arc (0:180:.4);
	\draw[draw=blue!80, thick] (2*\a-.4,8) -- (2*\a-.4,7);
	\draw[draw=blue!80, thick] (2*\a-.4,7) arc (180:360:.4);
	
	\draw[draw=blue!80, thick] (2*\a+.4,4) -- (2*\a+.4,5);
	\draw[draw=blue!80, thick] (2*\a+.4,5) arc (0:180:.4);
	\draw[draw=blue!80, thick] (2*\a-.4,5) -- (2*\a-.4,4);
	\draw[draw=blue!80, thick] (2*\a-.4,4) arc (180:360:.4);
  }
	\node (points1) at (23.2,11){\ldots};
	\node (points2) at (23.2,7.5){\ldots};
	\node (points3) at (23.2,4.5){\ldots};
	\node (points4) at (23.2,2){\ldots};
	\node (points5) at (23.2,0){\ldots};
	\node (pointsv) at (11,14){\vdots};
	\node (pointsv2) at (11,-1.7){\vdots};

	\superblock{0}{26}{10}{12}{.7}{purple!70}
	\superblock{0}{24.3}{7}{8}{.7}{purple!70}
	\superblock{0}{26}{4}{5}{.7}{purple!70}
	\superblock{0}{26}{2}{2}{.7}{purple!70}
	\superblock{0}{26}{0}{0}{.7}{purple!70}
  \draw (26,8) node[scale=.5,circle, fill=black!60]{} ;
  \draw (26,7) node[scale=.5,circle, fill=black!60]{} ;
	\draw[draw=blue!80, thick] (26.4,7) -- (26.4,8) arc (0:180:.4)
    (25.6,8) -- (25.6,7) arc (180:360:.4);
\label{figure.naked_nested}
\end{tikzpicture}
\end{center}

Following Definition~\ref{theorem.canonical_block_system}, let
$\nested{G}$ be the nested block system of $G$; recall that it
consists in a partition of the set of maximal finite blocks (but the
kernel of $G$) into finitely many \blocksofblocks $(\BB)_j$. Let
$\stab{G}{\nested{G}}$ be the stabilizer of the \blocksofblocks; it is a
finite index normal subgroup of $G$ and therefore, by
Lemma~\ref{lemma.finite_index_same_nested}, has the same \blocksofblocks.

We now consider the restriction
$G\BBindex=\restrict{\stab{G}{\nest}}{E\BBindex}$ of
$\stab{G}{\nested{G}}$ on the support $E\BBindex$ of each
\blockofblocks $\BB$. It admits a single \blockofblocks, so that we
can use the classification result of
Theorem~\ref{theorem.classification_finite_blocks}; define
accordingly $H_0\BBindex$, $H\BBindex$, and $\P\BBindex$, such that
$G\BBindex$ is isomorphic to
$\groupfiniteblocks[\P\BBindex]{H_0\BBindex}{{H\BBindex}}$. Following
Corollary~\ref{corollary.minimal_subgroup_finite_blocks}, let
$K\BBindex=H\BBindex\wreath\PM\BBindex$ be the minimal finite index
normal subgroup of $G\BBindex$.

Recall
that, by Lemma~\ref{lemma.subwreath},
$K\BBindex=H\BBindex\wreath\PM\BBindex$ is the minimal finite index
normal subgroup of $G\BBindex$, that one has
$G\BBindex = K\BBindex H_0\BBindex$ and that $H_0\BBindex$ acts
diagonally on $E\BBindex$.

To be concrete, use Lemma~\ref{lemma.ladder} to choose a \emph{coherent
enumeration} of the elements of each block $B\BBindex_i$ of
$\BB$: for each $i,i'$ there exists $g\in G$ that maps
$B\BBindex_i$ to $B\BBindex_{i'}$ while preserving the enumeration. From
now on, we use this chosen enumeration to implicitly identify
elements of $B\BBindex_i$ and of $B\BBindex_{i'}$ when meaningful.
Recall that $\PM\BBindex$ is obtained by considering the
homomorphic image $\P\BBindex$ of $G\BBindex$ acting on the blocks in $\BB$
and, if needed, taking its normal subgroup of index $2$ to ensure that it
contains no proper finite index normal subgroup.
$H\BBindex$ can be obtained by picking arbitrarily two blocks
$B\BBindex_0$ and $B\BBindex_1$ in $\BB$, and taking the restriction
to $B\BBindex_1$ of the subgroup of $G$ that fixes $B\BBindex_0$ and
stabilizes $B\BBindex_1$. Recall also that $\PM\BBindex$ is $\sg_\infty$
whenever the blocks are non trivial; otherwise, $H\BBindex$ is the
trivial permutation group on one element, and $K\BBindex=\PM\BBindex$
is one of $\sg_\infty$, $\Aut(\QQ)$ or $\Aut(\QQ/\ZZ)$.

In addition, set by convention $B\BBindex[0]_0=\ker{G}$,
$\BB[0]=\{\ker G\}$, and $K\BBindex[0]=\Id_{\ker{G}}$.

\begin{remark}
  The groups $K\BBindex$ and $K\BBindex[j']$ are conjugate whenever the
  \blocksofblocks $\BB$ and $\BB[j']$ are in the same $G$-orbit.
\end{remark}

\begin{proposition}
  \label{proposition.K_minimum}
  Let $G$ be a closed $P$-oligomorphic group. Then
  $K=\prod_j K\BBindex$ is the minimal finite index normal subgroup of $G$.
\end{proposition}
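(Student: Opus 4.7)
The plan is to first exhibit $K=\prod_j K\BBindex$ as a finite index normal subgroup of $G$, and then to verify its minimality among such subgroups.

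To place $K$ inside $G$, I will extract a finite index normal subgroup $\tilde{G}\subseteq\stab{G}{\nested{G}}$ that decomposes as an internal direct product $\tilde{G}=\prod_j \tilde{G}\BBindex$, where each $\tilde{G}\BBindex$ acts on $E\BBindex$ and trivially on every other $E\BBindex[j']$, and is a finite index normal subgroup of $G\BBindex$. Lemma~\ref{lemma.stable_superblocks_are_indep} allows one to trivialize the block-level synchronizations between distinct superblocks; the residual within-block (diagonal) synchronizations across superblocks are controlled by the finite quotients $H_0\BBindex/H\BBindex$ and can be killed by passing to a further finite index subgroup (then taking a finite intersection of $G$-conjugates to preserve $G$-normality). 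By the minimality of $K\BBindex$ in $G\BBindex$ (Corollary~\ref{corollary.minimal_subgroup_finite_blocks}), $K\BBindex\subseteq\tilde{G}\BBindex$, so each $K\BBindex$ sits inside $G$ as permutations supported on $E\BBindex$, and $K=\prod_j K\BBindex\subseteq\tilde{G}\subseteq G$ is an internal direct product. Its index $[G:K]\leq[G:\tilde{G}]\prod_j[\tilde{G}\BBindex:K\BBindex]$ is finite, and $K$ is $G$-normal: conjugation by $g\in G$ permutes the superblocks via some $\sigma$ and induces an isomorphism $G\BBindex\cong G^{(\sigma(j))}$; since $K\BBindex$ is the unique minimum (hence characteristic) finite index normal subgroup of $G\BBindex$, this maps $K\BBindex$ onto $K^{(\sigma(j))}$, so $gKg^{-1}=K$.

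For minimality, let $L$ be any finite index normal subgroup of $G$. Then $L\cap K\triangleleft K$ has finite index in $K$, hence for each $j$, $L\cap K\BBindex\triangleleft K\BBindex$ has finite index in $K\BBindex$. Applying Corollary~\ref{corollary.minimal_subgroup_finite_blocks} to the group $K\BBindex=H\BBindex\wreath\PM\BBindex$ itself, viewed as the classified group $[H\BBindex,(H\BBindex)^\infty,\PM\BBindex]$ (with trivial outer quotient), shows that $K\BBindex$ admits no proper finite index normal subgroup. Hence $L\cap K\BBindex=K\BBindex$ for every $j$, and thus $K=\prod_j K\BBindex\subseteq L$.

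The main technical obstacle is the construction of the direct product subgroup $\tilde{G}$: Lemma~\ref{lemma.stable_superblocks_are_indep} addresses only block-level synchronizations between superblocks, so it must be complemented by a finite-index reduction handling the residual diagonal synchronizations between the finite groups $H_0\BBindex$ across superblocks; fortunately these are controlled by the finite quotients $H_0\BBindex/H\BBindex$ and admit a straightforward finite-index remedy.
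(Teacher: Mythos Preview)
Your proof is correct and follows essentially the same strategy as the paper's: reduce first to $\stab{G}{\nested{G}}$, then pass to a further finite index normal subgroup in which the actions on the different superblocks decouple, and finally use the minimality of each $K\BBindex$ (Corollary~\ref{corollary.minimal_subgroup_finite_blocks}) both to identify $K$ inside $G$ and to prove it is contained in every finite index normal subgroup. The paper packages the second reduction slightly more concretely---as the kernel of the simultaneous action of $\stab{G}{\nest}$ on the finitely many cosets of the $K\BBindex$ in the $G\BBindex$---which gives the desired finite index normal subgroup in one stroke and makes its $G$-normality more transparent; your version achieves the same end via Lemma~\ref{lemma.stable_superblocks_are_indep} plus an ad hoc finite-index pass to kill the residual diagonal $H_0\BBindex/H\BBindex$-synchronizations, followed by intersecting $G$-conjugates. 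Your argument for normality of $K$ via the characteristic property of $K\BBindex$ in $G\BBindex$ is a nice alternative to the paper's route (where normality is inherited automatically from the kernel construction), and your minimality argument is exactly the paper's Lemma~\ref{lemma.K_minimal} unpacked componentwise.
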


Let us start by proving the following result.

\begin{lemma}
\label{lemma.K_minimal}
A permutation group $K$ of the above form admits no (proper) finite index 
normal subgroup.
\end{lemma}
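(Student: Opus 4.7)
The plan is to reduce the statement to the single-superblock case treated in Section~\ref{section.finite_blocks}, and then to derive it from Corollary~\ref{corollary.minimal_subgroup_finite_blocks}.

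I would first reduce to establishing the corresponding statement for each factor $K\BBindex$ individually. Since $K = \prod_j K\BBindex$ is a direct product of finitely many factors, each $K\BBindex$ is a normal subgroup of $K$. Given any finite index normal subgroup $N \triangleleft K$, the intersection $N \cap K\BBindex$ is normal in $K\BBindex$, and the natural embedding $K\BBindex / (N \cap K\BBindex) \hookrightarrow K/N$ shows that its index in $K\BBindex$ is finite. Hence, if each $K\BBindex$ admits no proper finite index normal subgroup, then $N \cap K\BBindex = K\BBindex$ for every $j$, so $K\BBindex \subseteq N$ for all $j$, and therefore $N \supseteq \prod_j K\BBindex = K$, forcing $N = K$.

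It then suffices to show that each factor $K\BBindex = H\BBindex \wreath \PM\BBindex$ has no proper finite index normal subgroup. I would observe that $K\BBindex$ is a closed $P$-oligomorphic permutation group (closedness follows from $H\BBindex$ being finite and $\PM\BBindex$ being closed; $P$-oligomorphicity from the inclusion $K\BBindex \leq H\BBindex \wreath \sg_\infty$) whose nested block system consists of a single superblock by construction. Applying Theorem~\ref{theorem.classification_finite_blocks} to $K\BBindex$, its classification parameters are $H_0 = H = H\BBindex$ and $\P = \PM\BBindex$; since by Remark~\ref{remark.profile_one_fi_subgroups} the group $\PM\BBindex$ is already one of the three minimal highly homogeneous groups and hence has no proper finite index normal subgroup, the minimal subgroup of $\P$ associated to $K\BBindex$ is $\PM\BBindex$ itself. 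Corollary~\ref{corollary.minimal_subgroup_finite_blocks} then asserts that any finite index normal subgroup of $K\BBindex$ is determined by parameters $\tilde H_0, \tilde M$ satisfying $H\BBindex \leq \tilde H_0 \leq H\BBindex$ and $\PM\BBindex \leq \tilde M \leq \PM\BBindex$; both chains of inclusions collapse, forcing $\tilde H_0 = H\BBindex$ and $\tilde M = \PM\BBindex$, so that the only finite index normal subgroup of $K\BBindex$ is $K\BBindex$ itself.

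The main obstacle is essentially bookkeeping: correctly identifying the classification parameters attached to the group $K\BBindex$ considered in isolation, and checking the hypotheses of Corollary~\ref{corollary.minimal_subgroup_finite_blocks}. Once this is in place, combining the single-factor claim with the direct product reduction completes the proof.
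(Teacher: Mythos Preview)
Your proof is correct and, in fact, cleaner than the paper's. Both arguments share the same core input, namely Corollary~\ref{corollary.minimal_subgroup_finite_blocks}, which handles each single factor $K\BBindex$. The difference lies in how the direct product is treated. The paper argues via restrictions: given a finite index normal subgroup $\tilde K$, it uses Lemma~\ref{lemma.finite_index_same_nested} to keep the \blocksofblocks, observes that the restriction of $\tilde K$ to each $E\BBindex$ must equal $K\BBindex$, realizes $\tilde K$ as a subdirect product of the $K\BBindex$, and then invokes Lemma~\ref{lemma.stable_superblocks_are_indep} to rule out any non-trivial synchronization. Your approach instead exploits the direct product structure of $K$ directly: intersecting $N$ with each factor and applying the second isomorphism theorem immediately forces $K\BBindex\subseteq N$, so that no discussion of subdirect products or synchronizations is needed at all. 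This buys you a shorter, more elementary argument; the paper's route, while heavier here, stays within the subdirect-product framework used consistently elsewhere in the paper.
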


\begin{proof}
  If a subgroup $\tilde{K}$ of $K$ is normal and of 
  finite index, Lemma~\ref{lemma.finite_index_same_nested} states that
  $K$'s \blocksofblocks are still \blocksofblocks for $\tilde{K}$; and since they are
  stable under $K$, they are also stable under any subgroup of $K$. 
  Using the classification
  of Corollary~\ref{corollary.minimal_subgroup_finite_blocks}, the restrictions 
  of $K$ to any 
  \blocksofblocks have no finite index normal subgroup, so $\tilde{K}$ has the same
  restrictions and is thus a subdirect product of these.
  There remains to show that there is no synchronization between these parts.
  Again, the $K\BBindex$ have no finite index normal subgroups, so no finite 
  synchronization (that would be linked to a proper normal subgroup of finite index)
  is to consider; and the case of infinite synchronizations is excluded by
  Lemma~\ref{lemma.stable_superblocks_are_indep}.
\end{proof}

\begin{proof}[Proof of Proposition~\ref{proposition.K_minimum}]
  We are going to reduce $G$ down to $\fisubgroup$ by applying two 
  successive reductions to a normal subgroup of finite index, which 
  will conclude the proof using Lemma~\ref{lemma.K_minimal}.

  Recall that the intersection of two finite index normal subgroups is
  again a finite index normal subgroup. Hence the finite index normal
  subgroups of $G$ form a lattice. It is not guaranteed \emph{a priori} to
  have a minimal element though.

  We consider the nested block system $\nested{G}$ introduced in
  Section~\ref{section.canonical}. By
  Lemma~\ref{lemma.finite_index_same_nested}, at each step, the nested block
  system of the finite index normal subgroup will still be $\nest=\nested{G}$. 
  In particular, the kernel will not grow bigger.
  Denote as earlier %
  by $(BB\BBindex)_j$ the \blocksofblocks of $\nested{G}$
  and by $(E\BBindex)_j$ their respective supports.

  Let $\stab{G}{\nest}$ be the finite index normal subgroup of $G$ 
  that stabilizes the \blocksofblocks in $\nested{G}$, which is the first reduction.
  
  Assume first $j\ne 0$. We may apply the
  classification result of Lemma~\ref{lemma.subwreath}~:
  $\restrict{\stab{G}{\nest}}{E\BBindex}$ contains as a finite index
  normal subgroup some wreath product
  $H\BBindex \wreath \PM \BBindex$, where $\PM \BBindex$ is
  given by the action of $\stab{G}{\nest}$ on the set of maximal finite blocks
  of $E\BBindex$ (possibly up to taking an index 2 normal subgroup),
  while $H\BBindex$, which acts within the blocks, is given by $H$ in the
  tower $H_0,H,H,H,\ldots$ of $\restrict{\stab{G}{\nest}}{E\BBindex}$. 
  This subgroup is isomorphic to $K\BBindex$, which also implies that it 
  contains no proper normal subgroup of finite index; therefore, thanks to
  the aforementionned lattice structure, it is the minimal finite index 
  normal subgroup of $\restrict{\stab{G}{\nest}}{E\BBindex}$.

  The same conclusions can be
  reached trivially for $j=0$ (recall that $E\BBindex[0]$ is the
  kernel of $G$ and $K\BBindex[0]$ is the trivial group thereupon).

  Now is the time for the second reduction.
  Consider the finitely many cosets of $K\BBindex$ in
  $\restrict{\stab{G}{\nest}}{E\BBindex[j]}$; the latter (and therefore
  $\stab{G}{\nest}$) acts by permutation on these cosets. Now denote
  by $\tilde{\fisubgroup}$ the kernel of its simultaneous action
  on the whole set of cosets for all $j$.
  At this point, for each given $j$, the restriction of $\tilde{\fisubgroup}$ 
  to $E\BBindex$ is a subgroup of $K\BBindex$; 
  it could be a proper subgroup at first glance,
  due to the constraints inherited from the action on the other sets of 
  cosets (the cosets from other $E\BBindex$, for different $j$).
  However, thanks to the minimality of $\fisubgroup\BBindex$,
  we do have $\restrict{\tilde{\fisubgroup}}{E\BBindex}=\fisubgroup\BBindex$, 
  for every $j$~: $\tilde{\fisubgroup}$ is a subdirect product of 
  the direct product $\fisubgroup$.
  Conclude the proof by replaying that of Lemma~\ref{lemma.K_minimal} to show that 
  $\tilde{\fisubgroup}$ is actually the whole direct product, namely $\fisubgroup$ 
  itself, and use Lemma~\ref{lemma.K_minimal} and the lattice structure to state
  its status of minimum.
\end{proof}

\begin{remark}
  From Remark~\ref{remark.independent_parts},
  $\orbitalgebra{\fisubgroup}$ is a free algebra, possibly tensored
  with some finite dimensional diagonal algebra, which is finitely
  generated. Explicitely, we may write:
  \[\orbitalgebra{\fisubgroup} = \bigotimes_i \orbitalgebra{\fisubgroup_i} 
  \otimes \orbitalgebra{\ker{\fisubgroup}} = 
  \mathcal{A}_f \otimes \orbitalgebra{\ker{\fisubgroup}}
  = \bigoplus_k e_k~ \mathcal{A}_f\]
  the $e_k$ being the subsets of the kernel of $\fisubgroup$;
  in other words, $\orbitalgebra{\fisubgroup}$ is a Cohen-Macaulay 
  algebra over the free subalgebra $\mathcal{A}_f$.
\end{remark}

\begin{corollary}
  The lower bound provided by the nested block system according to 
  Remark~\ref{remark.refined_bound} is tight.
\end{corollary}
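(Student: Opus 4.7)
The plan is to combine three facts that are essentially already in place: (i) the description of the minimal finite index normal subgroup $\fisubgroup=\prod_j K\BBindex$ from Proposition~\ref{proposition.K_minimum}, (ii) the tensor factorization of $\orbitalgebra{\fisubgroup}$ obtained in the remark just above, and (iii) the growth-preservation statement of Lemma~\ref{lemma.subgroup.rate}. By the last of these, $G$ and $\fisubgroup$ share the same profile growth and in particular the same algebraic dimension of orbit algebras, so it suffices to match the refined lower bound of Remark~\ref{remark.refined_bound} with the algebraic dimension of $\orbitalgebra{\fisubgroup}$.

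Using the decomposition $\orbitalgebra{\fisubgroup}\simeq\bigotimes_j \orbitalgebra{K\BBindex}\otimes \orbitalgebra{\ker G}$ from the preceding remark, and noting that $\orbitalgebra{\ker G}$ is finite-dimensional and thus contributes nothing to algebraic dimension, the dimension of $\orbitalgebra{\fisubgroup}$ is the sum over $j$ of the algebraic dimensions of the factors $\orbitalgebra{K\BBindex}$. For each $j$, I would then split into two cases: either the superblock $\BB$ has singleton blocks, in which case $K\BBindex$ is one of the three minimal highly homogeneous groups and $\orbitalgebra{K\BBindex}\simeq\QQ[x]$ has algebraic dimension $1$; or the blocks are non-trivial and $K\BBindex=H\BBindex\wreath \sg_\infty$, in which case Example~\ref{example.wreath_products}(3) identifies $\orbitalgebra{K\BBindex}$ with the free commutative algebra on $|\age{H\BBindex}^+|$ generators, contributing $|\age{H\BBindex}^+|$ to the total dimension.

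These per-superblock contributions coincide one-for-one with those produced by the refined lower bound of Remark~\ref{remark.refined_bound}, which for each superblock is based precisely on $H\BBindex\wreath \sg_\infty$ (or on the highly homogeneous action when the blocks are trivial) and so yields the same count. Summing over $j$ gives the desired equality, proving the corollary. There is no real obstacle; the argument is essentially a bookkeeping assembly of results already proved, the main conceptual ingredient being the observation in the preceding remark that there is no residual synchronization between the contributions of distinct superblocks, which is why the tensor decomposition holds on the nose.
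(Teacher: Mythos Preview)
Your proposal is correct and follows essentially the same approach as the paper: compute the algebraic dimension of $\orbitalgebra{\fisubgroup}$ via its tensor decomposition into the free algebras $\orbitalgebra{K\BBindex}$ (plus the finite-dimensional kernel factor), observe that each factor contributes exactly the per-superblock quantity from Remark~\ref{remark.refined_bound}, and invoke the finite-index relation between $\fisubgroup$ and $G$ to conclude. The paper's proof is much terser, but the logical skeleton is identical; your case split between trivial and non-trivial blocks and the explicit appeals to Example~\ref{example.wreath_products} merely spell out what the paper leaves implicit.
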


\begin{proof}
The algebraic dimension of the algebra of $K$ is the sum of dimensions
of the $\mathcal{A}_f$'s, which coincide with the lower bound handed by the nested
system; and the algebraic dimension of $\orbitalgebra{G}$ is the same by finiteness
of the index of $\fisubgroup$.
\end{proof}

\subsection{Semidirect product structure and diagonal action}
\label{subsection.diagonal}

In this section, we generalize the notion of diagonal action of
Section~\ref{section.finite_blocks}, and prove that $G$ is a product
$FK$, where $F$ is a finite permutation group acting diagonally on
$\nested{G}$, or, equivalently, that $G$ is a semidirect product of
$K$ with some natural quotient $F/\finiteK$.

Informally, a permutation acts diagonally if it acts consistently within each block
of a given superblock, and possibly permutes the superblocks but not
the blocks within each superblock. Considering Figure~\ref{figure.naked_nested}, one
may intuitively picture it as a purely vertical action, that
permutes the rows formed by the elements of $\domain$ on the figure
--- and preserves the superblock structure of course.
There remains to define formally the vertical alignment.

An indexing of the $B\BBindex$ within each \blockofblocks $\BB$ is
\emph{coherent} if, for any two \blockofblocks $\BB$ and $\BB[j']$ in
the same orbit, there exists some permutation $g_{j, j'}$ in $G$ which
maps each block $B\BBindex_{i}$ to $B\BBindex[j']_{i}$.

\begin{lemma}
  A $P$-oligomorphic group always admits a coherent indexing of the
  blocks within each \blockofblocks.
\end{lemma}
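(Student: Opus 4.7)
The plan is to show existence by transporting a single chosen indexing along the orbit action. Since the nested block system contains only finitely many superblocks (Proposition~\ref{proposition.structure_nested}), the orbits of $G$ on the set of superblocks partition them into finitely many classes; it suffices to handle each orbit separately.

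First, I would fix an orbit $\mathcal{O}$ of superblocks and pick a representative $BB^{(j_0)} \in \mathcal{O}$; index the blocks of $BB^{(j_0)}$ in an arbitrary way as $B^{(j_0)}_1, B^{(j_0)}_2, \dots$. For every other superblock $BB^{(j')} \in \mathcal{O}$, choose (by the definition of an orbit) some $\sigma_{j'} \in G$ with $\sigma_{j'}(BB^{(j_0)}) = BB^{(j')}$ as sets of blocks, and transport the indexing by setting $B^{(j')}_i := \sigma_{j'}(B^{(j_0)}_i)$. This gives a well-defined indexing of the blocks within $BB^{(j')}$ because $\sigma_{j'}$, being a group element, permutes the set of all maximal finite blocks, hence restricts to a bijection between the blocks of $BB^{(j_0)}$ and those of $BB^{(j')}$.

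To verify coherence, take any two superblocks $BB^{(j_1)}, BB^{(j_2)}$ in the same orbit $\mathcal{O}$. The permutation $g_{j_1, j_2} := \sigma_{j_2} \sigma_{j_1}^{-1} \in G$ sends $B^{(j_1)}_i = \sigma_{j_1}(B^{(j_0)}_i)$ to $\sigma_{j_2}(B^{(j_0)}_i) = B^{(j_2)}_i$ for every $i$, as required. (For $j_1 = j_0$ we take $\sigma_{j_0} = \Id$.) Doing this independently on each of the finitely many orbits of superblocks yields a coherent indexing on the whole nested block system. The construction is essentially formal; the only thing being used beyond basic group-theoretic manipulations is the finiteness of the set of orbits of superblocks, which comes from the structure of $\nested{G}$, so there is no real obstacle to overcome.
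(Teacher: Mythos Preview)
Your proof is correct and essentially identical to the paper's own argument: pick a representative superblock in each orbit, choose an arbitrary indexing there, transport it to the other superblocks via chosen group elements, and obtain the coherence maps $g_{j_1,j_2}$ by composing $\sigma_{j_2}\sigma_{j_1}^{-1}$. The only cosmetic difference is notation ($\sigma_{j'}$ versus the paper's $g_{j_0,j}$) and that you spell out a few routine verifications the paper leaves implicit.
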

\begin{proof}
  We may, for example, proceed as follow: for each orbit of
  \blockofblocks, pick one \blockofblocks $\BB[j_0]$ and choose any
  indexing for this \blockofblocks; then, for each \blockofblocks
  $\BB[j]$ in its orbit, pick some permutation $g_{j_0,j}\in G$
  mapping $\BB[j_0]$ to $\BB$; use $g_{j_0,j}\in G$ to transport the
  indexing of the blocks in $\BB[j_0]$ to $\BB[j]$; finally, for each
  $\BB[j]$ and $\BB[j']$ in the orbit of $\BB[j_0]$, define
  $g_{j,j'}=g_{j_0,j'}g_{j_0,j}^{-1}$ (here and elsewhere in the
  paper, composition is denoted from right to left).
\end{proof}

From now on, we fix a coherent indexing of the blocks within the
\blockofblocks. We further assume without loss of generality that the
chosen coherent enumeration of the elements within the blocks is
preserved by the $g_{j,j'}$'s: the enumeration of the elements of a
block $B\BBindex_i$ is mapped by $g_{j,j'}$ to the enumeration of the
elements of $B\BBindex[j']_i$. To achieve this, we may proceed as
above: choose the coherent enumerations in the superblock $\BB[j_0]$,
and then use the $g_{j_0, j}$ to transport them to the other
superblocks in the same orbit.

It will be convenient to index globally the elements of $E$. To this
hand, choose one block $U\BBindex$ within each superblock in such a
way that, whenever $U\BBindex$ and $U\BBindex[j']$ are in the same
orbit, they have the same index in $\BB$ and $\BB[j']$ respectively.
Let $U=\sqcup_j U\BBindex$. Now any element $e$ of $E$ can be uniquely
described by two coordinates: ``horizontally'' the index $i$ of the
block $B\BBindex$ containing it; ``vertically'', the element $u\in U$
which corresponds to $e$ in the coherent enumeration of $U\BBindex$.
For convenience, we identify $e$ with the pair $(u,i)$.

\begin{definition}
  A permutation $g\in G$ \emph{acts diagonally} on $E$ if 
  any pair of elements $((u,i_1),\,(u,i_2))$ is mapped to some
  pair $((u',i_1),\,(u',i_2))$.

  Let respectively $\finiteG$ (resp. $\finiteK$) be the collection of
  the permutations of $G$ (resp. $K$) that acts diagonally on $E$.
\end{definition}

\begin{remark}
  The collections $\finiteG$ and $\finiteK$ are groups and can be 
  canonically identified with finite permutation groups of $U$. 
  In addition,
  $\finiteK$ is the direct product of each $H\BBindex$ acting on
  $U\BBindex$. Finally, $\finiteG$ as a finite permutation group does
  not depend on the choice of the coherent indexing of the blocks
  within the superblocks and, up to isomorphism, does not depend on
  the choice of $U$ or of the coherent enumeration of the elements
  within the blocks.
\end{remark}

We may now state the main result of this section. Here we settle for
$\Rev$-free groups as this is sufficient for this paper. The extension
to the general case is straightforward at the expense of heavier
notations.
\begin{proposition}
  \label{proposition.semidirect_product}
  Let $G$ be a \Rev-free $P$-oligomorphic group, and $K$ and
  $\finiteG$ be defined as above. Then,
  \begin{enumerate}[(i)]
  \item $G=K\finiteG$;
  \item $G/K$ is isomorphic to $\finiteG/\finiteK$;
  \item $G$ is a semi-direct product of $\finiteG/\finiteK$ and $K$.
  \end{enumerate}
\end{proposition}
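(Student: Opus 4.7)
The plan is to prove (i) by an explicit two-stage decomposition of each $g \in G$ as $g = kf$ with $k \in K$ and $f \in \finiteG$; parts (ii) and (iii) will then follow by applying the first isomorphism theorem to the restriction of $G \twoheadrightarrow G/K$ to $\finiteG$.

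For Stage 1, the element $g$ acts on the set of maximal finite blocks, inducing a permutation $\sigma$ of the superblocks within their $G$-orbits together with, on each map $\BB \to \BB[\sigma(j)]$, a permutation $\tau_j$ of block indices. \Rev-freeness combined with Theorem~\ref{theorem.classification_finite_blocks} forces $\P\BBindex = \PM\BBindex$, so $\tau_j \in \PM\BBindex[\sigma(j)]$, which is the top factor of $K\BBindex[\sigma(j)] = H\BBindex[\sigma(j)] \wreath \PM\BBindex[\sigma(j)]$. Choosing $k_1 \in K$ with this top factor equal to $\tau_j^{-1}$ on each $\BB[\sigma(j)]$, the product $g_1 := k_1 g$ preserves block indices: $g_1(B\BBindex_i) = B\BBindex[\sigma(j)]_i$.

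For Stage 2, in the coherent enumeration $g_1$ acts on each such block-to-block map by some $\rho_{j,i}$. The central claim is that, for each $j$, all $\rho_{j,i}$ lie in a single left-coset of $H\BBindex[\sigma(j)]$ inside ${H_0}\BBindex[\sigma(j)]$. Granting this, one writes $\rho_{j,i} = h_{\sigma(j)}\, \eta_{j,i}$ with $\eta_{j,i} \in H\BBindex[\sigma(j)]$, and chooses $k_2 \in K$ whose $(H\BBindex[\sigma(j)])^\infty$-component is $(\eta_{j,i}^{-1})_i$; then $f := k_2 g_1$ acts as $(u,i) \mapsto (h_{\sigma(j)}(u), i)$ independently of $i$, placing $f \in \finiteG$ and giving $g = (k_1^{-1} k_2^{-1}) f$. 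To prove the coset claim, I would use the coherent-indexing element $g_{j_0, \sigma(j_0)} \in G$, which by construction maps $B\BBindex[j_0]_i$ to $B\BBindex[\sigma(j_0)]_i$ preserving the coherent enumeration; the restriction of $s := g_{j_0, \sigma(j_0)}^{-1} g_1$ to $E\BBindex[j_0]$ lies in the blockwise stabilizer of $G\BBindex[j_0]$, and by Proposition~\ref{proposition.finite_blocks_structure} combined with the tower $H_0,H,H,\ldots$ of Proposition~\ref{proposition.tower}, any such stabilizer element has per-block actions lying in a single coset of $H$ in $H_0$; pulling back through $g_{j_0, \sigma(j_0)}$ delivers the claim for $g_1$.

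For (ii) and (iii), the restriction of $G \twoheadrightarrow G/K$ to $\finiteG$ has kernel $\finiteG \cap K = \finiteK$ and is surjective by (i), yielding the isomorphism $\finiteG/\finiteK \simeq G/K$ of (ii). Since $K$ is normal in $G$, $\finiteK$ is normal in $\finiteG$, and the short exact sequences $1 \to K \to G \to G/K \to 1$ and $1 \to \finiteK \to \finiteG \to \finiteG/\finiteK \to 1$ are equivalent via the natural inclusions $\finiteK \hookrightarrow K$ and $\finiteG \hookrightarrow G$; a section of the latter, obtained explicitly from the classification (since $\finiteK$ is the direct product of the finite groups $H\BBindex$ and $\finiteG$ decomposes compatibly via the diagonal action of ${H_0}\BBindex/H\BBindex$ on each superblock), transports to the desired semidirect-product structure on $G$. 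The main obstacle is the coset claim in Stage 2: its content is that the within-block variation of $g_1$ is controlled by $H\BBindex$ and not by the larger ${H_0}\BBindex$, a fact entirely encoded in the one-step stabilization of the tower, whose translation from the single-superblock setting to a general $g \in G$ is exactly what the coherent indexing was designed to enable.
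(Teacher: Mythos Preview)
Your proof of (i) is correct and follows the same strategy as the paper, with a cosmetic difference: the paper handles each superblock in one step by directly invoking the decomposition $G\BBindex = K\BBindex\, H_0\BBindex$ from Lemma~\ref{lemma.subwreath} (after composing $g$ with the coherent-indexing element $g_{j,j'}$ to reduce to the stabilizing case), whereas you split this into two stages---first correcting block indices via the $\PM\BBindex$-factor of $K\BBindex$, then correcting the within-block action via the $H\BBindex$-factor. Your ``coset claim'' is exactly the content of $G\BBindex = K\BBindex\, H_0\BBindex$ read through the blockwise stabilizer, so the two arguments are equivalent. Your proof of (ii) is identical to the paper's.

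For (iii) there is a genuine gap in your argument. You claim a section of the finite sequence $1 \to \finiteK \to \finiteG \to \finiteG/\finiteK \to 1$ exists ``explicitly from the classification'', but this can fail: take a single superblock with $H_0$ cyclic of order $4$ and $H$ its index-$2$ subgroup; then $\finiteG \cong \ZZ/4\ZZ$, $\finiteK \cong \ZZ/2\ZZ$, and the sequence does not split. Interestingly, in this very example $G$ \emph{is} a genuine semidirect product $K \rtimes \ZZ/2\ZZ$, but the order-$2$ lift must involve a fixed-point-free involution on the set of blocks together with an alternating $H$-twist---it cannot be found inside $\finiteG$. The paper sidesteps the whole issue by declaring (iii) a ``reformulation of (ii)'', which either uses ``semidirect product'' loosely (to mean extension) or is itself elliptic on this point; in any case, your proposed route through a section of the finite sequence does not work.
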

\begin{proof}
  (i) We take $g\in G$ and aim to write it as a product in
  $K\finiteG$.

  Consider some superblock $\BB$. Let $j'$ be such that $g$ maps
  $\BB[j']$ to $\BB$, and set $h=g g_{j, j'}$; by construction, $h$
  stabilizes $\BB$ and its restriction $\restrict{h}{E\BBindex}$
  is thus in $G\BBindex$. Since $G$ is \Rev-free, we have $\PM\BBindex=\P\BBindex$,
  and $G\BBindex=K\BBindex H_0\BBindex$. Pick accordingly
  $k\BBindex$ in $K\BBindex$ such that
  $k\BBindex \restrict{h}{E\BBindex}$ is in $H_0\BBindex$ and thus
  acts diagonally on $E\BBindex$. Then,
  $k\BBindex \restrict{g}{E\BBindex[j']}$ maps $\BB[j']$ to $\BB$,
  preserving the coherent indexing of blocks and enumeration within
  the blocks.

  Define $k=\prod k\BBindex\in K$ by extending each $k\BBindex$ by the
  identity on the other superblocks. Then, $kg$ acts diagonally, as
  desired.

  (ii) Consider now the morphism $\phi: \finiteG\mapsto G/K$ obtained
  through the embedding of $\finiteG$ in $G$ and the canonical
  projection of the latter on $G/K$. Thanks to (i), $\phi$ is
  surjective. In addition $\ker \phi= \finiteG \cap K=\finiteK$.
  Therefore, $\phi$ is the desired isomorphism between
  $\finiteG/\finiteK$ and $G/K$.

  (iii) is a reformulation of (ii).
\end{proof}

\subsection{Classification of closed $P$-oligomorphic groups}
\label{subsection.classification}

We now have all the ingredients to classify (closed) $P$-oligomorphic groups.
We first use the previous sections to extract from a
$P$-oligomorphic group $G$ a finite piece of information $\Data(G)$.
It consists of a finite permutation group, endowed with a block system
where each block is decorated with a permutation group of its elements
and one of the five highly homogeneous permutation groups(or the
trivial group).
Conversely, we show that, starting from such a permutation group with
decorated blocks $\data$, one can construct an oligomorphic permutation group
$\DataInv(\data)$.

We check that, up to a natural isomorphism, $G$ can
be reconstructed from $\Data(G)$ by $\DataInv$. More generally, we
check that, up to an isomorphism, $\Data$ and $\DataInv$ give a
one-to-one correspondence between finite permutation groups with
decorated blocks and $P$-oligomorphic permutation groups. This
concludes the classification.

\subsubsection{Classification of $\Rev$-free closed $P$-oligomorphic groups}

For the sake of simplicity of exposition, we first tackle the subclass
of \emph{\Rev-free groups}; that is groups that do not act by
$\Rev(\QQ)$ or $\Rev(\QQ/\ZZ)$ on any of their \blocksofblocks. This
is actually sufficient to classify ages of $P$-oligomorphic groups. In
the following section, we detail how $\Data(G)$ can be extended to
also preserve this piece of information.

Let $G$ be a closed $P$-oligomorphic group.
Take again the notations introduced at the beginning of the
previous subsection: $\nested{G} = \{BB\BBindex\}_j$, $K = \prod_j K\BBindex$,
with $K\BBindex = H\BBindex \wreath \Prim$,
and finally the finite blocks $(B\BBindex_0)_j$ arbitrarily picked 
in each \blockofblocks.

Consider the group $G_{<\infty}$ together with its normal subgroup
$K_{<\infty}=\prod_j H\BBindex$, as defined in
Subsection~\ref{subsection.diagonal}, and identify them canonically
with permutation groups of the finite set $\sqcup_j B\BBindex_0$.
\begin{definition}
  Define $\Data(G) = (G_{<\infty}, (B\BBindex_0)_j, (H\BBindex)_j, (\Prim)_j)$.
\end{definition}

\begin{definition}
  A \textbf{permutation group with decorated blocks}
  $(F, B, (H\BBindex)_j, (\Prim)_j)$ consists of a finite
  permutation group $F$ endowed with a block system
  $B = \{B\BBindex\}_j$ together with the choice, for each block
  $B\BBindex$, of
  \begin{itemize}
  \item a normal subgroup $H\BBindex$ of the restriction
    $\restrict{\text{Fix}_F(\cup_{i\neq j} B\BBindex[i])}{B\BBindex}$
    of the pointwise stabilizer of the other blocks,
  \item $\Prim$, one of the three minimal highly homogeneous groups, or
    the trivial group,
  \end{itemize}
  satisfying the following constraints:
  \begin{itemize}
  \item the choices must be the same for $i\neq j$ whenever
    $B\BBindex[i]$ and $B\BBindex$ are in the same $F$-orbit,
  \item $\Prim$ is $\sg_\infty$ whenever $B\BBindex$ is not of size $1$,
  \item at most one $\Prim$ is trivial, and when it is
    $K\BBindex=H\BBindex$ is trivial too.
  \end{itemize}
\end{definition}

\begin{remark}
  Let $G$ be a closed $P$-oligomorphic permutation group. Then $\Data(G)$ is
  a permutation group with decorated blocks (recall the construction of $K$ 
  at the beginning of Subsection~\ref{minimal_finite_subgroup} and its 
  normality in $G$ stated by Proposition~\ref{proposition.K_minimum}).
\end{remark}

\begin{definition}
  \label{definition.Group}
  Let $\data = (F, B, (H\BBindex)_j, (\Prim)_j)$ be a permutation group with
  decorated blocks, and $\domain$ the disjoint union
  $\sqcup_j \domain\BBindex$, where $\domain\BBindex$ is the cartesian
  product of $B\BBindex$ and the domain of $\Prim$.
  For each $j$, take the wreath product
  $K\BBindex=H\BBindex \wr \Prim$ acting naturally on $\domain\BBindex$.
  Finally, let $K$ be the direct product $\prod_j K\BBindex$, acting on $\domain$.

  We define $\DataInv(\data)$ as the smallest permutation group on
  $\domain$ containing both $K$ and $F$ acting diagonally on 
  $\sqcup_j \domain\BBindex$. Denote additionally $H = \prod_j H\BBindex
  = K_{<\infty}$.
\end{definition}

\begin{proposition}
  Let $\data$ be a permutation group with decorated blocks. Define
  $G=\DataInv(\data)$, and use the notations above. Then,
  $G$ is a $P$-oligomorphic permutation group.
\end{proposition}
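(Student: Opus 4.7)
The plan is: first reduce to showing $K$ is $P$-oligomorphic via a finite-index argument, then reduce further to each factor $K\BBindex$ via the disjoint-union decomposition, and finally dispose of each factor using Example~\ref{example.wreath_products}.

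First I would check that $K$ is a normal subgroup of finite index in $G$. By construction, the diagonal action of $f \in F$ on $\domain = \sqcup_j \domain\BBindex$ sends a pair $(b, x) \in B\BBindex \times \text{dom}(\Prim)$ to $(f(b), x) \in B\BBindex[j'] \times \text{dom}(\Prim[j'])$, where $B\BBindex[j']$ denotes $f(B\BBindex)$. This is well defined precisely because the constraints on decorated block systems force $\text{dom}(\Prim) = \text{dom}(\Prim[j'])$ within each $F$-orbit; similarly, the equality $H\BBindex = H\BBindex[j']$ ensures that conjugation by this diagonal $f$ carries $K\BBindex = H\BBindex \wr \Prim$ onto $K\BBindex[j']$. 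Hence the diagonal copy $F_{\mathrm{diag}}$ of $F$ normalizes $K = \prod_j K\BBindex$, and since $G = \langle K, F_{\mathrm{diag}} \rangle = K \cdot F_{\mathrm{diag}}$ with $F_{\mathrm{diag}}$ finite, the index $|G:K|$ is bounded by $|F|$. By Lemma~\ref{lemma.subgroup.rate}, it suffices to show that $K$ itself is $P$-oligomorphic.

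Next, since $\domain$ splits as a disjoint union of $K$-stable subsets $\domain\BBindex$ on which only the factor $K\BBindex$ acts nontrivially, iterating Lemma~\ref{lemma.operations.direct_product} yields $\profileseries{K} = \prod_j \profileseries{K\BBindex}$. It thus remains to show that each $\profileseries{K\BBindex}$ is a rational fraction of polynomial growth. Three cases arise from the constraints on $\data$: when $\Prim$ is trivial (the single allowed kernel component), $K\BBindex$ acts trivially on a finite set and has eventually vanishing profile; when $B\BBindex$ is a singleton, $K\BBindex$ collapses to $\Prim$, one of the three minimal highly homogeneous groups, and has profile identically $1$; otherwise $|B\BBindex| \geq 2$ forces $\Prim = \sg_\infty$ by the constraints, and Example~\ref{example.wreath_products}\,(3) gives $\profileseries{K\BBindex} = 1 / \prod_d (1 - z^d)$, the product being indexed by the finite multiset of degrees of $\age{H\BBindex}^+$. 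A finite product of such rational fractions remains polynomially bounded, which completes the argument.

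The only delicate point is the first step, namely checking that the coherence conditions built into the definition of permutation groups with decorated blocks are exactly what is needed to make the diagonal action of $F$ a well-defined permutation action on $\domain$ that normalizes $K$. Once that is in place, the whole argument is a straightforward assembly of the basic operations on profiles and the wreath-product computations already recalled in Section~\ref{section.preliminaries}.
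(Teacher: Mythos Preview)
Your proof is correct and follows exactly the same approach as the paper: reduce to $K$ via Lemma~\ref{lemma.subgroup.rate}, then observe that $K$ is a direct product of wreath products $H\BBindex \wreath \Prim$, each of which is $P$-oligomorphic. The paper's own proof is two lines and simply asserts that $K$ is $P$-oligomorphic and invokes Lemma~\ref{lemma.subgroup.rate}; you have supplied the details the paper leaves implicit, in particular the verification that $F_{\mathrm{diag}}$ normalizes $K$ so that $K$ has finite index. One small point: when you write that ``the equality $H\BBindex = H\BBindex[j']$ ensures that conjugation by this diagonal $f$ carries $K\BBindex$ onto $K\BBindex[j']$'', strictly speaking you also need the normality of $H\BBindex$ in $\restrict{\text{Fix}_F(\cup_{i\neq j} B\BBindex[i])}{B\BBindex}$ (part of the decorated-block axioms), since different elements $f$ mapping $B\BBindex$ to $B\BBindex[j']$ induce different identifications, and it is normality that makes $f H\BBindex f^{-1}$ land in $H\BBindex[j']$ regardless of which $f$ is used.
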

\begin{proof}
  The subgroup $K$ of $G$ is the direct product of the wreath products
  $H\BBindex \wr \Prim$, and therefore $P$-oligomorphic. %
  This implies the result by Lemma~\ref{lemma.subgroup.rate}.

\end{proof}

We proceed by defining the notion of isomorphism for groups with decorated blocks,
and checking that it matches the classical notion of isomorphism for 
$P$-oligomorphic groups.
\begin{definition}
  Let $\data$ and $\data'$ be two permutation groups with decorated
  blocks. Then, $\data$ and $\data'$ are isomorphic if there exists an
  isomorphism between the underlying groups $F$ and $F'$ that
  transports the block system $B$ and the groups $H\BBindex$ and
  $\PM\BBindex$ to their equivalents in $\data'$.
\end{definition}

\begin{lemma}
  \label{classification.reciprocal_correspondence1}
  Let $\data$ be a permutation group with decorated blocks. Then 
  $\DataInv(\data)$ is the natural semidirect product $K \rtimes F/H$
  with the notations of Definition~\ref{definition.Group}, and
  $\data'=\Data(\DataInv(\data))$ is isomorphic to $\data$.
\end{lemma}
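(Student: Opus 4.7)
The plan is to establish the two claims in turn: first the semidirect product structure of $\DataInv(\data)$, and then the recovery of $\data$ through $\Data$.

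For the first claim, I would proceed as follows. Set $G = \DataInv(\data)$ and let $F_d$ denote the copy of $F$ acting diagonally on $\domain = \sqcup_j \domain\BBindex$, so that by definition $G = \langle K, F_d\rangle$. I would first show that $K$ is normal in $G$ by checking that conjugation by an element $f\in F_d$ preserves $K$: since $F$ preserves the block system $B$ and the assignment $j\mapsto (H\BBindex, \Prim)$ is constant on each $F$-orbit of blocks, $f$ permutes the factors $H\BBindex \wr \Prim$ among themselves (sending the factor indexed by $j$ to the one indexed by $f(j)$), and within each factor conjugation by $f$ acts on $H\BBindex$ by an element of the normalizer (using the normality of $H\BBindex$ in $\restrict{\text{Fix}_F(\cup_{i\neq j}B\BBindex[i])}{B\BBindex}$). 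Normality of $K$ implies $G = K \cdot F_d$. Finally, I would compute $K\cap F_d$: an element of $F_d$ lies in $K$ iff it permutes no superblocks and acts within each block $B\BBindex_i$ of $\domain\BBindex$ by an element of $H\BBindex$; since the action is diagonal, this forces the same element of $H\BBindex$ on every block in $\domain\BBindex$, which is exactly the condition defining $H = \prod_j H\BBindex$ viewed as a subgroup of $F$. Hence $G/K \simeq F_d/H = F/H$, and the semidirect product structure follows.

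For the second claim, I set $G' = \DataInv(\data)$ and compute $\Data(G')$. I would first show that the partition $\{\domain\BBindex\}_j$ coincides with the nested block system $\nested{G'}$: each $\domain\BBindex$ is $G'$-stable by construction, the $B\BBindex_i$ inside each copy form maximal finite blocks (they are blocks of $K$, hence of $G'$, and cannot be enlarged without forcing synchronization across distinct superblocks, excluded by the minimality built into $K$), and the action of $G'$ on the set of finite blocks inside $\domain\BBindex$ is $\Prim$ (or trivial if the block is a singleton). Next, restricting to a single superblock $\domain\BBindex$, I would use Theorem~\ref{theorem.classification_finite_blocks} to write $\restrict{\stab{G'}{\nest}}{\domain\BBindex} \simeq \groupfiniteblocks[\Prim]{H_0'\BBindex}{H'\BBindex}$ and identify $H'\BBindex$ from the tower: the fixator of all other blocks restricted to $B\BBindex_i$ is precisely $H\BBindex$ by the definition of $K$ and the diagonal action of $F$. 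By Proposition~\ref{proposition.K_minimum} the minimal finite index subgroup of $G'$ is the $K$ of the construction, and by Proposition~\ref{proposition.semidirect_product} the group $G'_{<\infty}$ coincides with $F$ as a permutation group on $\sqcup_j B\BBindex_0$ (using the chosen coherent enumeration). The block system, the decorations $H\BBindex$ and the primitive pieces $\Prim$ therefore all match $\data$, giving the required isomorphism.

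The main obstacle I anticipate is the second part, specifically verifying that the tower computation on $G'$ returns exactly the prescribed $H\BBindex$ and not something larger. One must argue that no extra permutations creep into $\fix_{S_{\blocksystem}}(B\BBindex_1,\ldots)_{|B\BBindex_i}$ from interactions with $F_d$: this requires using the normality of $H\BBindex$ in $\restrict{\text{Fix}_F(\cup_{i\neq j}B\BBindex[i])}{B\BBindex}$ together with Lemma~\ref{lemma.stable_superblocks_are_indep} to rule out synchronizations between distinct superblocks that could be misread as part of the tower of a single superblock. The remaining arithmetic, including the fact that $G'_{<\infty}$ matches $F$ rather than merely containing it, follows from Proposition~\ref{proposition.semidirect_product} applied to $G'$ and the fact that $F/H$ already surjects onto $G'/K$.
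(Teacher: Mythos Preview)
Your proposal is correct and largely parallels the paper's argument, but there is one difference worth noting in how the second claim is handled.

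For the semidirect product, the paper simply observes that the groups $K$, $G_{<\infty}$, and $K_{<\infty}$ extracted from $G=\DataInv(\data)$ via Subsection~\ref{subsection.diagonal} coincide with $K$, $F$, and $H$ from the construction, so Proposition~\ref{proposition.semidirect_product} applies verbatim. Your direct verification of normality and of $K\cap F_d=H$ is exactly what underlies that identification, so the two arguments are equivalent; yours is more self-contained, the paper's more economical.

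For the recovery of $\data$, the paper takes a shorter path than you do and, in particular, sidesteps the tower computation you flag as the main obstacle. Having established that $K$ is normal of finite index in $G$, the paper invokes Lemma~\ref{lemma.K_minimal} (or equivalently Proposition~\ref{proposition.K_minimum}) to conclude that the constructed $K$ admits no proper finite index normal subgroup and is therefore \emph{the} minimal finite index normal subgroup of $G$. Each $H\BBindex$ is then read off directly from $K\BBindex=H\BBindex\wreath\Prim$, with no need to recompute the tower of $\restrict{\stab{G}{\nest}}{\domain\BBindex}$ from scratch; the agreement between the tower and the wreath-product decomposition of $K\BBindex$ is already packaged in Corollary~\ref{corollary.minimal_subgroup_finite_blocks}. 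So your worry about ``extra permutations creeping into the tower'' is legitimate for the route you chose, but the paper's route makes it moot. Your argument still goes through; it just does a bit of redundant work.
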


\begin{proof}
  Denote $G=\DataInv(\data)$.
  The groups $K$, $F$ and $K_{<\infty}$ obtained in 
  Subsection~\ref{subsection.diagonal} from $G$
  correspond to the groups of same name from Definition~\ref{definition.Group},
  so the results from that
  subsection apply and the first part is immediate.
  In particular, $K$ is a normal subgroup of
  $G$ of finite index, which in addition, by
  Proposition~\ref{proposition.K_minimum}, admits no finite index
  normal subgroup and is therefore the unique minimal finite index normal
  subgroup of $G$. This in turn implies the uniqueness of
  all the other pieces of $\Data(G)$: the nested block
  system of $G$ is given by
  $\nested{G}=(BB\BBindex)_j$, with $BB\BBindex=(B\BBindex_i)_i$ and, for $i$ in the
  support of $\PM\BBindex$, $B\BBindex_i=B\BBindex \times \{i\}$;
  the permutation subgroups $G_{<\infty}$ and $K_{<\infty}$
  induced respectively by $G$ and $K$ on $\sqcup_j B\BBindex_0$ are
  respectively trivially isomorphic to $F$ and $H$.
\end{proof}

\begin{lemma}
  \label{classification.reciprocal_correspondence2}
  Let $G$ be a \Rev-free $P$-oligomorphic group. Then,
  $G'=\DataInv(\Data(G))$ is isomorphic to $G$.
\end{lemma}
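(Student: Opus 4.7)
My plan is to construct an explicit permutation group isomorphism $\phi: G \to G'$ between $G$ and $G'=\DataInv(\Data(G))$, by first matching their supports canonically and then matching the two defining building blocks. Using the coherent indexing of blocks within each \blockofblocks $\BB$ and the coherent enumeration of elements within each block (both set up in Subsection~\ref{subsection.diagonal} via Lemma~\ref{lemma.ladder}), every element of $E\BBindex$ is uniquely represented as a pair $(u,i)$ with $u \in B\BBindex_0$ and $i$ indexing the block $B\BBindex_i$ of $\BB$; since the support of $\Prim$ is precisely this index set, this identifies $E\BBindex$ with $B\BBindex_0 \times \text{dom}(\Prim) = \domain\BBindex$. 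Gluing across the superblocks (with the natural identification on the kernel $\BB[0]$) then yields the desired bijection $\phi : E \to \domain$.

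Next, I would verify that $\phi$ sends the minimal finite index normal subgroup $K \leq G$ onto the subgroup $K' = \prod_j H\BBindex \wreath \Prim$ of $G'$ from Definition~\ref{definition.Group}. By Proposition~\ref{proposition.K_minimum}, $K = \prod_j K\BBindex$; and by Theorem~\ref{theorem.classification_finite_blocks} together with Corollary~\ref{corollary.minimal_subgroup_finite_blocks}, each $K\BBindex$ acts on $E\BBindex$ as $H\BBindex \wreath \Prim$, the coherent enumeration having been chosen precisely so that this wreath product action is realized literally on $B\BBindex_0 \times \text{dom}(\Prim)$. Moreover, by the very definition of $\finiteG$ in Subsection~\ref{subsection.diagonal}, $\phi$ sends $\finiteG$ onto $F = G_{<\infty}$ acting diagonally on $\sqcup_j \domain\BBindex$. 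Combining the $\Rev$-free case of Proposition~\ref{proposition.semidirect_product}, which gives $G = K \cdot \finiteG$, with the analogous factorization $G' = K' \cdot F$ provided by Lemma~\ref{classification.reciprocal_correspondence1}, I conclude $\phi(G) = \phi(K)\phi(\finiteG) = K' \cdot F = G'$, so $\phi$ is the sought isomorphism.

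The principal obstacle is administrative rather than structural: the coherent indexings and enumerations that underlie $\phi$ are only specified up to a large family of choices, and one must check that a single such choice intertwines the actions of $K$ and of $\finiteG$ \emph{simultaneously} under $\phi$. Equivalently, the diagonal action of $F = G_{<\infty}$ on $\sqcup_j B\BBindex_0$ must factor consistently through the trivializations that exhibit each $K\BBindex$ as a wreath product, which is exactly what the coherent enumerations of Lemma~\ref{lemma.ladder} guarantee. The $\Rev$-freeness hypothesis intervenes only through Proposition~\ref{proposition.semidirect_product}, ensuring $G = K \finiteG$ with no order-$2$ synchronizations to track separately.
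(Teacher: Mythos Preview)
Your proposal is correct and follows essentially the same approach as the paper's own proof: identify the supports via the coherent enumeration so that $K'=K$ and the diagonal finite groups coincide, then invoke Proposition~\ref{proposition.semidirect_product} (together with the factorization from Lemma~\ref{classification.reciprocal_correspondence1}) to conclude $G'=G$. Your write-up is simply more explicit about the bijection $\phi$ and about the administrative coherence of the choices, which the paper leaves implicit.
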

\begin{proof}
  We use the coherent enumeration to identify the elements of each
  $B\BBindex_i$ with that of $B\BBindex \times \{i\}$. Through this
  identification and by construction, we have $K'=K$; since the finite
  groups acting diagonally are the same as well, and using
  Proposition~\ref{proposition.semidirect_product}, we have indeed
  $G' = G$.
\end{proof}

\begin{theorem}
  \label{theorem.revfree_classification}
  \Rev-free $P$-oligomorphic permutation groups are classified by finite
  permutation groups with decorated blocks through the $\Data$ and
  $\DataInv$ reciprocal correspondences.
\end{theorem}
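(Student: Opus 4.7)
The plan is to derive the theorem essentially as a corollary of Lemmas~\ref{classification.reciprocal_correspondence1} and~\ref{classification.reciprocal_correspondence2}, supplemented by a verification that $\Data$ and $\DataInv$ descend to well-defined operations on isomorphism classes. Once the two lemmas are in hand, the content of the theorem is that the correspondences $[\Data]$ and $[\DataInv]$ obtained by passing to isomorphism classes are mutually inverse bijections.

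First I would check that $\Data$ is well-defined on isomorphism classes. Given an isomorphism of permutation groups $\varphi : G \to G'$ (that is, a bijection of supports intertwining the two actions), the intrinsic character of all the data involved guarantees that $\varphi$ transports $\Data(G)$ to $\Data(G')$: the nested block system is constructed from the lattice of block systems (Section~\ref{section.canonical}) and is therefore preserved by $\varphi$; the minimal finite index normal subgroup $K$ is intrinsic by Proposition~\ref{proposition.K_minimum}, and so are its restrictions $K\BBindex = H\BBindex \wr \Prim$ to each superblock (up to the classification of Theorem~\ref{theorem.classification_finite_blocks}); finally $\finiteG$ and $\finiteK$ are recovered as the subgroups acting diagonally in the sense of Subsection~\ref{subsection.diagonal}, which is again a condition preserved by $\varphi$. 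Hence $\Data(G) \cong \Data(G')$ as decorated block systems.

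Conversely, $\DataInv$ respects isomorphism of decorated data by naturality of its construction in Definition~\ref{definition.Group}: an isomorphism $\data \cong \data'$ transports the underlying finite group $F$, its block system, and the decorations $H\BBindex$, $\Prim$; it therefore transports each wreath product $K\BBindex$, the direct product $K = \prod_j K\BBindex$ with its action on $\support$, and the diagonal action of $F$; by the universal characterization of $\DataInv(\data)$ as the smallest permutation group generated by these, the isomorphism lifts to $\DataInv(\data) \cong \DataInv(\data')$.

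Combining these two observations with Lemmas~\ref{classification.reciprocal_correspondence1} and~\ref{classification.reciprocal_correspondence2} yields the bijection: $[\Data] \circ [\DataInv]$ and $[\DataInv] \circ [\Data]$ both act as the identity on isomorphism classes. The one subtle point, which I expect to be the main obstacle, is that the construction of $\Data(G)$ involves arbitrary choices -- the representative blocks $B\BBindex_0$ in each superblock, the coherent indexing $(g_{j,j'})$, and the coherent enumeration of elements within blocks fixed in Subsection~\ref{subsection.diagonal}. I would address this by noting that any two such choices differ by a permutation of $\support$ lying in the normalizer of the relevant structure, which induces an isomorphism (not equality) of the resulting decorated block systems; this is precisely the flexibility that is quotiented out when passing to isomorphism classes, so the correspondence remains well defined.
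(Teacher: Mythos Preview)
Your proposal is correct and follows essentially the same approach as the paper: the proof there simply invokes Lemmas~\ref{classification.reciprocal_correspondence1} and~\ref{classification.reciprocal_correspondence2} and declares $\Data$ and $\DataInv$ to be reciprocal correspondences. You are in fact being more careful than the paper by explicitly checking that both maps descend to isomorphism classes and by addressing the dependence of $\Data(G)$ on the auxiliary choices (representative blocks, coherent indexing, coherent enumeration); the paper takes these points for granted.
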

\begin{proof}
  Lemma~\ref{classification.reciprocal_correspondence1} together with
  Lemma~\ref{classification.reciprocal_correspondence1} asserts that
  $\Data$ and $\DataInv$ are reciprocal correspondences, as desired.
\end{proof}

\subsubsection{Extending the classification to all $P$-oligomorphic groups}

We start with an example illustrating that the straightforward
extension of $\Data$ to all $P$-oligomorphic groups does not give a
proper correspondence.
\begin{example}
  Consider the $P$-oligomorphic group $G=\Rev{\QQ} \times \Rev{\QQ}$,
  and the index $2$ subgroup generated by
  $G'=\Aut{\QQ} \times \Aut{\QQ}$ on the one hand and the reversal acting
  simultaneously on the two copies of $\QQ$ on the other hand.

  Let us try to define $\Data$ on $G$ and $G'$ as before; in both
  cases, we get the same data:
  \begin{displaymath}
    \left(\id(\{1,2\}),
      (\{i\})_{i=1,2},
      (\id(\{i\}))_{i=1,2},
      (\Rev(\QQ))_{i=1,2}\right)\,.
  \end{displaymath}
  The information about the synchronization of the reversal on the two
  \blockofblocks is lost.
\end{example}

\newcommand{\E}{\overline}

We now tweak the definition of $\Data$ to keep track of reversals in
the finite group $G_{<\infty}$. To achieve this, each copy of $R=\QQ$
(or of $R=\QQ / \ZZ$) where a reversal can occur will be compressed into
a block of two points instead of a single one.

Let $BB\BBindex$ be a \blockofblocks; if its blocks are of size $1$ and
$G$ acts on them by %
$\PM\BBindex=\Rev(R)$, then define $\E B\BBindex_0$ by
choosing any two points of $\domain\BBindex$; note that
$\E B\BBindex_0$ is not a block anymore, but this is fine. Otherwise,
define $\E B\BBindex_0$ as $B\BBindex_0$.

Define $\E G_{<\infty}$ as before, but using $\sqcup_j \E B\BBindex_0$
instead of $\sqcup_j B\BBindex_0$.

\begin{example}
  With $G$ and $G'$ as in the previous example, $\E G_{<\infty}$ and
  $\E G'_{<\infty}$ both act on $\{1,2\} \sqcup \{1,2\}$. However
  $\E G_{<\infty}$ is of size 4, permuting independently the two
  blocks, whereas $\E G'_{<\infty}$ is of size 2, permuting
  simultaneously the two blocks.
\end{example}

\begin{definition}
  Define
  $\E \Data{(G)} = (\E G_{<\infty}, (\E B\BBindex_0)_j, (H\BBindex)_j,
  (\Prim)_j)$.
\end{definition}

The definition of permutation group with decorated blocks must be
extended accordingly: each $\PM\BBindex$ can now be any one of the five
closed highly homogeneous groups; however $\E B\BBindex$ must be of
size $2$ whenever $\PM\BBindex$ is of the form $\Rev(R)$ and of size
$1$ whenever $\PM\BBindex$ is of the form $\Aut(R)$.

The definition of $\DataInv$ must be adjusted as well: if
$\PM\BBindex$ is of the form $\Rev(R)$ and therefore $B\BBindex$ is of
size $2$, then $\domain\BBindex$ consists of a single copy of the
support of $\PM\BBindex$. Furthermore, the diagonal action of an element $f$
of $F$ on $\domain$ must be adjusted: assumes that $\PM\BBindex$ is of
the form $\Rev(R)$ and therefore $B\BBindex$ is of size $2$; let $j'$
be such that $f$ maps $B\BBindex$ to $B\BBindex[j']$. Then, $f$ maps
the elements of $\domain\BBindex$ onto those of $\domain\BBindex[j']$,
with a reversal whenever the elements of $B\BBindex$ are swapped by
$f$ in $B\BBindex[j']$.

\begin{theorem}
  \label{theorem.classification}
  $P$-oligomorphic permutation groups are classified by finite
  permutation groups with (extended) decorated blocks through the
  $\E\Data$ and $\E\DataInv$ reciprocal correspondences.
\end{theorem}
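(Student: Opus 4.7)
The plan is to mimic the proof of Theorem~\ref{theorem.revfree_classification}, exploiting the fact that the $\E$-tweak affects only the finite piece $G_{<\infty}$ of the data, not the infinite backbone $K$. Indeed, since $\PM\BBindex$ is always one of the three minimal highly homogeneous groups (or trivial), independently of whether $\P\BBindex$ is of $\Rev$ type, the minimal finite index normal subgroup $K = \prod_j H\BBindex \wreath \PM\BBindex$ provided by Proposition~\ref{proposition.K_minimum} is the same in the $\Rev$-free and general settings. What changes is that $G/K$ may now contain, for each $\Rev$-type superblock $\BB$, a non-trivial element of $\P\BBindex / \PM\BBindex \simeq \ZZ/2\ZZ$; these extra components may furthermore be synchronized across superblocks, and the role of the two chosen points in $\E B\BBindex_0$ is precisely to record all such couplings inside a single finite permutation group.

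Concretely, I would first extend Proposition~\ref{proposition.semidirect_product} to all $P$-oligomorphic groups by declaring that a permutation $g \in G$ acts \emph{diagonally up to reversals} if it maps each $\BB$ to some $\BB[j']$ while preserving the chosen coherent enumeration up to a possible global reversal on each $\Rev$-type superblock. Such permutations form a finite group canonically identified with $\E G_{<\infty}$ acting on $\sqcup_j \E B\BBindex_0$, and the argument of Proposition~\ref{proposition.semidirect_product} extends with essentially the same steps: any $g \in G$ can be brought to diagonal-up-to-reversal form by post-composition with a suitable element of $K$, where for each $\Rev$-type superblock one additionally allows to insert a representative of the non-trivial coset of $\PM\BBindex$ in $\P\BBindex$. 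This yields $G = K \, \E G_{<\infty}$ and a semi-direct product decomposition $G \simeq K \rtimes (\E G_{<\infty} / \E K_{<\infty})$.

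With this generalised semi-direct structure in hand, both round-trips proceed almost verbatim as in the $\Rev$-free case. For $\E\DataInv(\E\Data(G)) \simeq G$, I would identify the reconstructed $K'$ with $K$ via the coherent enumeration, noting that for a $\Rev$-type superblock the two chosen points of $\E B\BBindex_0$ can always be taken to be paired by the reversal of $\P\BBindex$, and then conclude using the generalised semi-direct product statement above. Conversely, for $\E\Data(\E\DataInv(\data)) \simeq \data$, starting from $G = \E\DataInv(\data)$ one recovers the nested block system, the subgroups $H\BBindex$ and the minimal groups $\PM\BBindex$ exactly as in Lemma~\ref{classification.reciprocal_correspondence1}, while the $\Aut$-versus-$\Rev$ status of $\P\BBindex$ on each superblock, together with the cross-superblock reversal synchronizations, are recovered from whether and how the elements of $F$ swap the two points of the relevant $\E B\BBindex_0$.

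The main obstacle is the faithful bookkeeping of the swap-structure on the family $(\E B\BBindex_0)_j$: it must simultaneously encode (i) the single-superblock distinction between $\P\BBindex = \Aut(R)$ and $\P\BBindex = \Rev(R)$ and (ii) the possible synchronizations of reversals across several superblocks. That no other kind of finite synchronization can intrude follows from Remark~\ref{remark.synchro_between_primitive_actions_on_blocks}, which guarantees that all order-$2$ couplings between primitive actions are of reversal type. Consequently the quotient $G/G^\circ \hookrightarrow (\ZZ/2\ZZ)^r$, where $G^\circ$ denotes the $\Rev$-free finite index subgroup obtained by forcing every $\Rev$-action to its $\Aut$ subgroup and $r$ counts the $\Rev$-type superblocks, embeds faithfully via its action on the distinguished reversal-pairs, providing exactly the additional piece of finite data beyond the $\Rev$-free classification.
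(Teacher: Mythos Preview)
Your proposal is correct and takes essentially the same approach as the paper: the paper's proof is the single sentence ``Follow the steps of the proof of Theorem~\ref{theorem.revfree_classification}, mutatis-mutandis,'' and what you have written is a careful and accurate unpacking of precisely those \emph{mutatis-mutandis} modifications --- extending the notion of diagonal action to allow reversals on $\Rev$-type superblocks, recording the reversal/synchronization data via the swap action on the two-point sets $\E B\BBindex_0$, and then rerunning the two round-trip lemmas.
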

\begin{proof}
  Follow the steps of the proof of
  Theorem~\ref{theorem.revfree_classification}, mutatis-mutandis.
\end{proof}

\section{Resolution of the conjectures and the Cohen-Macaulay property}
\label{section.consequences}

Here, we use the classification previously obtained to prove that 
the orbit algebra of $G$ is isomorphic to (a simple quotient of) the 
invariant ring of a finite
permutation group $F$. We deduce that Macpherson's conjecture holds:
$\orbitalgebra{G}$ is finitely generated, and even Cohen-Macaulay. In
addition, the minimal finite index subgroup $K$ prescribes the algebraic 
dimension of the orbit algebra
of $G$ and provides a natural system of parameters, and thus (a
choice of) the degrees appearing in the denominator of \ref{hilbert_series},
the Hilbert series.

Let $G$ be a (closed) $P$-oligomorphic group, $\fisubgroup = \prod_j K\BBindex$ 
its minimal subgroup of finite index, and use again the notations of 
the previous section.
Let $D_G$ be the set of degrees of the non zero degree elements of the 
ages $\age{H\BBindex}$ of the $H\BBindex$'s.
\begin{theorem}
  \label{theorem.inv_algebra}
  Let $G$ be a permutation group whose profile is bounded by a
  polynomial. Then, $\orbitalgebra{G}$ is isomorphic to the algebra of
  invariants of some finite permutation group acting on variables of
  degrees $D_G$, quotiented by the relations $x^2=0$ for some of the variables.
\end{theorem}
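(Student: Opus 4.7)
The strategy is to leverage the classification of Section~\ref{section.classification_general_case} to compute $\orbitalgebra{G}$ essentially layer by layer. By Lemma~\ref{lemma.completion_same_profile_and_orbits} we may replace $G$ by its closure without affecting the orbit algebra, after which Theorem~\ref{theorem.classification} and Proposition~\ref{proposition.K_minimum} give a minimal finite index normal subgroup
\[K \;=\; \prod_{j\geq 0} K\BBindex,\]
where $K\BBindex[0] = \Id_{\ker G}$ and, for $j\geq 1$, $K\BBindex = H\BBindex \wreath \Prim$, with $H\BBindex$ a finite permutation group and $\Prim$ one of the three minimal highly homogeneous groups (equal to $\sg_\infty$ whenever $H\BBindex$ is non-trivial, by Lemma~\ref{lemma.sym_on_finite_blocks}).

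The plan is first to give an explicit description of $\orbitalgebra{K}$ as a polynomial algebra modulo nilpotent relations, and then to identify $\orbitalgebra{G}$ inside it as the invariant subring under the finite quotient $G/K$. Iterating Lemma~\ref{lemma.operations.direct_product} yields $\orbitalgebra{K} \cong \bigotimes_{j\geq 0} \orbitalgebra{K\BBindex}$. For $j\geq 1$, Example~\ref{example.wreath_products}(3) (whose argument uses only the high homogeneity of $\Prim$ and adapts immediately from $\sg_\infty$ to $\Aut(\QQ)$ and $\Aut(\QQ/\ZZ)$) identifies $\orbitalgebra{K\BBindex}$ with the free commutative algebra on $X\BBindex := \age{H\BBindex}^+$, with generator degrees the sizes of the non-trivial $H\BBindex$-orbits. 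For $j=0$, the disjoint product on the trivial group acting on the finite kernel makes $\orbitalgebra{K\BBindex[0]}$ isomorphic to $\KK[X\BBindex[0]]/(x^2 : x \in X\BBindex[0])$, with one generator per element of $\ker G$. Gathering these pieces,
\[\orbitalgebra{K} \;\cong\; \KK[X]/I, \qquad X = \bigsqcup_{j\geq 0} X\BBindex, \qquad I = (x^2 : x \in X\BBindex[0]),\]
with generator degrees exactly $D_G$.

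Next, because $K$ is normal of finite index in $G$, every $G$-orbit of finite subsets splits into finitely many $K$-orbits permuted transitively by $G/K$. Through the canonical embedding $\orbitalgebra{G} \hookrightarrow \orbitalgebra{K}$ of Lemma~\ref{lemma.subalgebras}(2), a $G$-orbit is sent to the sum of its constituent $K$-orbits, which is exactly the $G/K$-orbit sum in $\orbitalgebra{K}$; dually, averaging by the Reynolds operator of the finite group $G/K$ provides a retraction $\orbitalgebra{K}\to\orbitalgebra{G}$. Hence the image of $\orbitalgebra{G}$ is precisely the invariant subring $\orbitalgebra{K}^{G/K}$. By Proposition~\ref{proposition.semidirect_product} (and its $\Rev$-aware extension from Subsection~\ref{subsection.classification}), $G/K$ is realized as a finite permutation group $F$ acting diagonally on $\sqcup_j B\BBindex_0$; this diagonal action permutes $H\BBindex$-orbits of subsets of the blocks, hence permutes the variables $X$, preserves $X\BBindex[0]$ and therefore stabilizes $I$. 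Combining everything,
\[\orbitalgebra{G} \;\cong\; (\KK[X]/I)^F,\]
which is the desired description.

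The main obstacle will be this last step: checking that the induced action of $G/K$ on $\orbitalgebra{K}$ really is by permutation of the variables $X$, rather than some more general graded algebra automorphism. This requires tracking, via the coherent enumerations set up in Subsection~\ref{subsection.diagonal}, how the diagonal action of $F$ transports $K\BBindex$-orbits between blocks inside a single \blockofblocks and across \blocksofblocks lying in the same $G$-orbit, and then verifying compatibility with the tensor decomposition of $\orbitalgebra{K}$ and with the nilpotent ideal $I$. Once this bookkeeping is in place, the conclusion is immediate from the preceding identifications.
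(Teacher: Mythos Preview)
Your proof is correct and follows essentially the same strategy as the paper: compute $\orbitalgebra{K}$ as a polynomial ring (with nilpotent generators for the kernel) via the tensor decomposition, then identify $\orbitalgebra{G}$ as the invariants of a finite permutation group acting on the generators. The one simplification in the paper is that, instead of routing through $G/K\cong \finiteG/\finiteK$ and the diagonal action, it defines the finite group directly as the action of $G$ on the set of $K$-orbits of non-trivial subsets of blocks---this is manifestly a permutation of the variables $\theta_{i,j}$ and bypasses the bookkeeping you flag in your last paragraph.
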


\begin{proof}
  For each \blockofblocks $\BB$, let $S_j$ be the collection of all
  the non-trivial subsets of all blocks of $\BB$. Let
  $S=\sqcup_j S_j$. By the definition of block systems, $K$ acts on
  each $S_j$ and on $S$. Denote by $(\theta_{i,j})_i$ the $K$-orbits
  in $S_j$ and observe that they are in bijection with the positive
  degree part $\age{G}^+(H\BBindex)$ of the age of $H\BBindex$.

  As in Example~\ref{example.wreath_products}, the orbit algebra
  $\orbitalgebra{K\BBindex}$ of $K\BBindex$, for $j \neq 0$, is the
  free algebra $\QQ[(\theta_{i,j})_i]$; for $j=0$, the orbit algebra
  is the finite dimensional algebra
  $\QQ[(\theta_{i,0})_i] / (\theta_{i,0}^2=0~ \forall i)$ instead. The
  orbit algebra of $K$ itself is the tensor product
  $\bigotimes_j\orbitalgebra{K\BBindex}$, generated by
  $(\theta_{i,j})_{i,j}$.

  The group $G$ itself also acts on $S$; since $K$ is normal in $G$,
  this lifts to an action on the finitely many $K$-orbits
  $(\theta_{i,j})_{i,j}$ in $S$. Let $G_0$ be the finite permutation
  group induced by this action, and let
  $\QQ[(\theta_{i,j})_{i,j}]^{G_0}$ be its invariant ring. Then,
  $\orbitalgebra{G}$ is the following quotient thereof:
  \begin{displaymath}
    \orbitalgebra{G} = \QQ[(\theta_{i,j})_{i,j}]^{G_0} / (\theta_{i,0}^2=0~ \forall i)\,.
  \end{displaymath}
\end{proof}

An immediate corollary is a positive resolution of Macpherson's
question and therefore Cameron's conjecture.
\begin{corollary}
  The orbit algebra $\orbitalgebra{G}$ of a $P$-oligomorphic
  permutation group $G$ is finitely generated.
\end{corollary}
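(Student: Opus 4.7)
The plan is to deduce this essentially directly from Theorem~\ref{theorem.inv_algebra}, using only one standard fact from classical invariant theory. First I would reduce to the closed case: by Lemma~\ref{lemma.completion_same_profile_and_orbits}, $G$ and its closure $\overline G$ share the same orbit algebra, so it suffices to prove the statement when $G$ is closed. This reduction matters because Theorem~\ref{theorem.inv_algebra} (whose proof relies on the classification Theorem~\ref{theorem.classification}) is stated for closed $P$-oligomorphic groups via the classification machinery.

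Next I would invoke Theorem~\ref{theorem.inv_algebra} to write
\begin{displaymath}
  \orbitalgebra{G} \;\simeq\; \QQ[(\theta_{i,j})_{i,j}]^{G_0} \,\big/\, (\theta_{i,0}^2 \suchthat i)\,,
\end{displaymath}
where $G_0$ is a \emph{finite} permutation group acting on the finitely many variables $\theta_{i,j}$ (with degrees in the finite set $D_G$ of degrees appearing in the ages of the $H\BBindex$). Since $G_0$ is finite and the polynomial ring $\QQ[(\theta_{i,j})_{i,j}]$ has only finitely many generators, Hilbert's classical finiteness theorem for invariant rings of finite groups (a straightforward consequence of Noether's bound, valid in characteristic zero) guarantees that $\QQ[(\theta_{i,j})_{i,j}]^{G_0}$ is finitely generated as a $\QQ$-algebra.

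Finally, finite generation passes to quotients: if $A$ is finitely generated and $I \subseteq A$ is any ideal, then $A/I$ is finitely generated, by taking the images of the generators of $A$. Applying this with $A = \QQ[(\theta_{i,j})_{i,j}]^{G_0}$ and $I$ the ideal generated by the elements $\theta_{i,0}^2$ yields that $\orbitalgebra{G}$ is finitely generated, as required.

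There is no real obstacle here, as the hard work has already been done in Theorem~\ref{theorem.inv_algebra}; the only thing to be mildly careful about is that the variables $\theta_{i,j}$ are of various (positive) degrees, so one should think of $\QQ[(\theta_{i,j})_{i,j}]$ as a graded polynomial algebra rather than a standard graded one. This affects the shape of the Hilbert series but not finite generation, for which the classical argument goes through unchanged.
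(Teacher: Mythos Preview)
Your argument is correct and follows essentially the same route as the paper: invoke Theorem~\ref{theorem.inv_algebra} to realize $\orbitalgebra{G}$ as a quotient of the invariant ring of a finite group, apply Hilbert's finiteness theorem, and note that finite generation passes to quotients. Your explicit reduction to the closed case and the remark on the grading are reasonable clarifications, but they do not change the substance of the proof.
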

\begin{proof}
  By Hilbert theorem, the invariant ring of a finite group is finitely
  generated (see e.g. \cite[Theorem 1.2]{Stanley.1979}) and $\orbitalgebra{G}$ is a
  quotient thereof.
\end{proof}

Invariant rings of permutation groups are not only finitely generated
and but also Cohen-Macaulay (see e.g. \cite[Theorem
3.2]{Stanley.1979}). The quotient of a Cohen-Macaulay algebra is not
Cohen-Macaulay in general, but the statement and proof of
\cite[Theorem 3.2]{Stanley.1979} can be generalized to suitable
quotients of invariant rings to deduce that orbit algebras are
Cohen-Macaulay.

Let us first recall the following classical fact about graded
commutative algebras.

\begin{fact}
  \label{fact.graded_free_module_generators}
  Let $R=\bigoplus_{d\in \NN} R_d$ be a graded connected commutative
  algebra (over a field $\mathbb{K}$) and take a free graded $R$-module $A$.
  Then, from any homogeneous $\mathbb{K}$-basis of $A$, we may
  extract a subfamily which is an $R$-basis of $A$.
\end{fact}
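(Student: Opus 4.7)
The plan is to reduce the statement to a graded Nakayama-type argument by passing to the graded $\mathbb{K}$-vector space $V := A/R_+A$, where $R_+ := \bigoplus_{d \geq 1} R_d$ is the augmentation ideal of $R$ (well-defined because $R$ is connected, i.e.\ $R_0 = \mathbb{K}$). The guiding principle I would establish is that, for a bounded-below graded free $R$-module $A$, a homogeneous subfamily is an $R$-basis of $A$ precisely when its image in $V$ is a $\mathbb{K}$-basis. Granted this, the conclusion is immediate: the image of the given homogeneous $\mathbb{K}$-basis $(b_i)_{i \in I_0}$ in $V$ is a homogeneous spanning family, and one may extract degree by degree a subset $I \subseteq I_0$ such that $(\bar b_i)_{i \in I}$ is a $\mathbb{K}$-basis of $V$; the candidate $R$-basis is then $(b_i)_{i \in I}$.

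For the generation half of the principle, I would introduce the graded $R$-linear map $\varphi : F \to A$ from the free graded $R$-module $F$ on homogeneous generators $(f_i)_{i \in I}$ with $\deg f_i = \deg b_i$, defined by $\varphi(f_i) = b_i$. Its cokernel $C$ is a bounded-below graded $R$-module satisfying $C = R_+C$ by the very choice of $I$. The graded Nakayama lemma---proved by examining a minimal degree $d$ at which $C$ could be nonzero and noting that $C_d \subseteq R_{\geq 1} \cdot C_{<d} = 0$---then forces $C = 0$, so $\varphi$ is surjective.

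For the independence half, I would use that $A$ is free and hence projective, so that the surjection $\varphi$ splits, yielding a graded $R$-module isomorphism $F \cong A \oplus \ker\varphi$. Reducing modulo $R_+$ gives a graded $\mathbb{K}$-vector space isomorphism $F/R_+F \cong V \oplus (\ker\varphi / R_+\ker\varphi)$, where $F/R_+F$ and $V$ already admit canonical $\mathbb{K}$-bases indexed by $I$ in the same degrees $(\deg b_i)_{i\in I}$. Matching graded dimensions forces $\ker\varphi / R_+\ker\varphi = 0$, and a second application of graded Nakayama then yields $\ker\varphi = 0$, which shows that $(b_i)_{i \in I}$ is an $R$-basis of $A$.

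The only real subtlety is the bounded-below hypothesis required for graded Nakayama; this is automatic in the paper's applications since the relevant graded objects are $\mathbb{N}$-graded. The rest of the argument amounts to routine bookkeeping in the decomposition into graded pieces.
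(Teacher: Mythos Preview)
Your proof is correct and follows essentially the same approach as the paper: both extract a subfamily whose image is a $\mathbb{K}$-basis of $A/R_+A$, establish generation via (explicit or implicit) graded Nakayama, and deduce $R$-linear independence by a dimension count. The paper phrases the last step by fixing from the outset a reference $R$-basis $(\nu_k)$ of $A$ and comparing cardinalities degree by degree, whereas you package it through the splitting $F\cong A\oplus\ker\varphi$ and a second application of Nakayama; this is a cosmetic difference only.
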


\begin{proof}
  Write $R^+=\bigoplus_{d>0} R_d$, and consider a family $(\nu_k)_{k\in K}$
  of secondary invariants for $A$: $A=\bigoplus_{k\in K} \nu_k R$\ ;
  let then $(\beta_j)_j$ be a vector basis of $A$, each $\beta_j$ being
  chosen homogeneous.
  Proceed by degree $d$: note that
  the elements $\nu_k$ of degree $d$ form a vector basis of a supplementary
  in $A_d$ of the homogeneous component $(R^+A)_d$ of the ideal
  $R^+A$; extract from $(\beta_j)_j$ a basis of some other supplementary. We
  obtain a subfamily $(\mu_k)_{k\in K}$ which spans $M$ as an
  $R$-module and is equinumerous in each degree with the family
  $(\nu_k)_k$. By dimension count, it is an $R$-basis of $M$.
\end{proof}

\begin{lemma}
  \label{lemma.cm.quotient}
  Let $F$ be a finite permutation group of a set $X\sqcup Y$ of
  variables. Assume that $F$ stabilizes $X$ and $Y$. Then,
  $\QQ[X\sqcup Y]^F / (y^2,\ y\in Y)$ is Cohen-Macaulay.
\end{lemma}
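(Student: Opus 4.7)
The plan is to adapt Stanley's classical argument for the Cohen--Macaulayness of invariant rings to this quotient setting. Write $A := \QQ[X \sqcup Y]/(y^2 : y \in Y)$. Since the ideal $(y^2 : y \in Y)$ is $F$-stable (as $F$ permutes $Y$) and we work in characteristic zero, Reynolds averaging splits the quotient, yielding a canonical isomorphism $\QQ[X\sqcup Y]^F/(y^2, y\in Y) \cong A^F$; it therefore suffices to show that $A^F$ is Cohen--Macaulay.

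First I would exhibit $A$ as a finitely generated free graded module over a suitable polynomial subring $\QQ[\theta] \subset A^F$. Observe that $A$ is free of rank $2^{|Y|}$ as a $\QQ[X]$-module, with basis the squarefree monomials in $Y$; in particular it is finite over $\QQ[X]$ and has Krull dimension $|X|$. Because $F$ stabilizes $X$, one may choose a homogeneous system of parameters $\theta_1, \ldots, \theta_{|X|}$ inside $\QQ[X]^F$ (standard primary invariants), which automatically lie in $A^F$. Since $\QQ[X]$ is itself free over $\QQ[\theta] := \QQ[\theta_1, \ldots, \theta_{|X|}]$ --- the classical freeness of a polynomial ring over an invariant hsop --- and $A$ is free over $\QQ[X]$, tensoring bases yields that $A$ is a finitely generated free graded $\QQ[\theta]$-module.

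Next I would invoke the Reynolds operator $\pi(a) := \tfrac{1}{|F|}\sum_{\sigma \in F}\sigma(a)$, a $\QQ[\theta]$-linear idempotent projection of $A$ onto $A^F$ (the $\QQ[\theta]$-linearity using that $\theta_i \in A^F$). This realizes $A^F$ as a graded direct summand of the free $\QQ[\theta]$-module $A$, hence as a finitely generated graded projective $\QQ[\theta]$-module, which is then graded free by a standard graded Nakayama argument over the positively graded polynomial ring $\QQ[\theta]$. Concretely, if $(\nu_k)_k$ is any homogeneous $\QQ[\theta]$-basis of $A$, then $(\pi(\nu_k))_k$ is a homogeneous $\QQ[\theta]$-generating family of $A^F$, from which one may extract a $\QQ[\theta]$-basis using Fact~\ref{fact.graded_free_module_generators}. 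Being a finitely generated free module over the polynomial ring $\QQ[\theta]$, the algebra $A^F$ is Cohen--Macaulay.

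The main subtlety is the hypothesis that $F$ stabilizes $X$ and $Y$ separately: it is precisely what allows one to choose a hsop entirely within $\QQ[X]^F$, so that the parameters are insensitive to the quotient by the $y^2$'s and remain a hsop for both $A$ and $A^F$ (this last fact being immediate from $A$ being finite free over $\QQ[X]$ and $A^F$ being a $\QQ[\theta]$-submodule of a finite module). The characteristic-zero hypothesis, built into the base field $\QQ$, is essential for both the splitting of the quotient by Reynolds and for the projective-to-free step.
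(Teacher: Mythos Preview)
Your argument is correct and follows the classical Hochster--Eagon/Stanley pattern: quotient first, then take invariants, and use the Reynolds operator to realize $A^F$ as a graded direct summand of the free $\QQ[\theta]$-module $A$, whence projective, whence free. The identification $\QQ[X\sqcup Y]^F/(y^2)\cong A^F$ via exactness of $(\,\cdot\,)^F$ in characteristic zero is the right way to read the statement, and your choice of a homogeneous system of parameters inside $\QQ[X]^F$ (crucially enabled by the hypothesis that $F$ stabilizes $X$) is exactly the point.

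The paper proceeds in the opposite order: it works inside $\QQ[X\sqcup Y]^F$ first, takes $\sym(X)$ as the parameter ring, and uses Fact~\ref{fact.graded_free_module_generators} to extract a $\sym(X)$-basis from the \emph{monomial basis} of orbit sums. The combinatorial observation that drives the proof is that an $F$-orbit sum of monomials is either entirely divisible by some $y^2$ or entirely squarefree in $Y$; hence passing to the quotient simply deletes some basis elements and leaves the others untouched, preserving freeness. Your approach is softer and more transportable (it would apply to any $F$-stable ideal $I\subset\QQ[Y]$ for which $\QQ[Y]/I$ is finite-dimensional), while the paper's approach is more explicit and hands you an actual $\sym(X)$-basis of the quotient without invoking the projective-implies-free step.
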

\begin{proof}
  Since $F$ stabilizes $X$ and $Y$, we may take $\sym(X)\sym(Y)$ as
  ring of primary invariants, over which $A=\QQ[X\sqcup Y]^F$ is a
  finite dimensional free module; in other words, there exists a finite 
  collection of
  homogeneous invariants $\nu_k$ such that we have:
  \begin{displaymath}
    A=\bigoplus_{k\in K} \nu_k \sym(X)\sym(Y)\,.
  \end{displaymath}
  It follows that $A$ is an (infinite dimensional) $\sym(X)$-free
  module. For instance, we have:
  \begin{displaymath}
    A=\bigoplus_{k,\lambda} \nu_k p_\lambda(Y) \sym(X)\,,
  \end{displaymath}
  where $(p_\lambda(Y))_{\lambda}$ is the \emph{power sum
    multiplicative basis} of $\sym(Y)$: for $\lambda$ an integer
  partition, $p_\lambda(Y)=\prod_{d\in \lambda}p_d(Y)$, where
  $p_d(Y):=\sum_{y\in Y} y^d$ is the $d$-th \emph{symmetric power sum}.
  A first consequence is that the quotient $A/(y^2,\ y\in Y)$ is a
  finitely generated $\sym(X)$-module:
  \begin{displaymath}
    A / (y^2,\ y\in Y)=\sum_{k\in K,\ i=1,\dots,|Y|} \nu_k p_1^i(Y) \sym(X)\,.
  \end{displaymath}

  There remains to show that it is a free module. To this end, let us
  come back to $A$ and consider its monomial basis, that is, the
  collection of all $F$-orbit sums of monomials in $\QQ[X\sqcup Y]$.
  By Fact~\ref{fact.graded_free_module_generators}, we may extract
  from the monomial basis a family $(\mu_\ell)_{\ell\in L}$ which forms a
  $\sym(X)$-basis of $A$:
  \begin{displaymath}
    A=\bigoplus_{\ell \in L} \mu_\ell \sym(X)\,.
  \end{displaymath}
  Remark that, if a monomial is divisible by $y^2$ for some $y\in Y$,
  then the same holds for any other monomial in its orbit sum.
  Therefore, in the quotient, each monomial $\mu_\ell$ either vanishes
  completely or is left unaffected, and the same holds accordingly for
  the principal ideal $\mu_\ell \sym(X)$. Therefore, we have indeed:
  \begin{displaymath}
    A / (y^2,\  y\in Y)=\bigoplus_{\ell \in L'} \mu_\ell \sym(X)\,,
  \end{displaymath}
  for some subset $L'$ of $L$, so that $A / (y^2)$ is a finite dimensional
  free $\sym(X)$-module, as desired.
\end{proof}

\begin{corollary}
  \label{corollary.Cohen-Macaulay}
  The orbit algebra $\orbitalgebra{G}$ of a $P$-oligomorphic
  permutation group $G$ is Cohen-Macaulay.
\end{corollary}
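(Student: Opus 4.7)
The plan is to combine Theorem~\ref{theorem.inv_algebra} with Lemma~\ref{lemma.cm.quotient} in a direct way. By Theorem~\ref{theorem.inv_algebra}, we already have an explicit presentation
\begin{displaymath}
  \orbitalgebra{G} \;\cong\; \QQ[(\theta_{i,j})_{i,j}]^{G_0} \,/\, (\theta_{i,0}^2 \,\colon\, i)\,,
\end{displaymath}
where $G_0$ is the finite permutation group induced by $G$ on the set of $K$-orbits in $S=\sqcup_j S_j$, and where the index $j=0$ is reserved for the (possibly empty) kernel superblock $\BB[0]=\{\ker G\}$. The only relations we are quotienting by are the squares of those variables coming from the kernel, and this is exactly the shape of quotient handled by Lemma~\ref{lemma.cm.quotient}.

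So first I would partition the variables as $X=\{\theta_{i,j}\colon j\neq 0\}$ and $Y=\{\theta_{i,0}\colon i\}$, and then verify that $G_0$ stabilizes $X$ and $Y$ separately. This follows because $\ker G$ is $G$-stable by construction of the kernel (it is the union of the finite orbits, hence preserved setwise by $G$), and therefore any $g\in G$ sends $K$-orbits contained in $\ker G$ to $K$-orbits contained in $\ker G$, and $K$-orbits in the non-kernel superblocks to $K$-orbits of the same nature; passing to the induced action, $G_0$ respects the decomposition $X\sqcup Y$.

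Now I would apply Lemma~\ref{lemma.cm.quotient} to $F=G_0$ and the partition $X\sqcup Y$, which yields exactly that $\QQ[X\sqcup Y]^{G_0}/(y^2\colon y\in Y)$ is Cohen-Macaulay, hence so is $\orbitalgebra{G}$. The only point requiring a small comment is that the variables $\theta_{i,j}$ are not of degree $1$ but carry the varying degrees given by $D_G$; however, $G_0$ is a permutation group of the variables and therefore necessarily permutes variables of equal degree among themselves, so the argument of Lemma~\ref{lemma.cm.quotient} goes through verbatim once one replaces each occurrence of $\sym(X)$, $\sym(Y)$ by the product $\prod_d \sym(X_d)$, $\prod_d \sym(Y_d)$ of full symmetric groups on the variables of each fixed degree $d$ (and the power sums $p_\lambda(Y)$ by analogous products within each degree class). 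This degree bookkeeping is the only place where a touch of care is needed, and it is harmless: both the freeness of $A$ as a module over the chosen primary invariants and the orbit-sum/monomial argument used to kill the ideal $(y^2\colon y\in Y)$ depend only on the fact that $G_0$ permutes the variables, not on their having a common degree. This concludes the proof.
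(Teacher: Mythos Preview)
Your proposal is correct and follows exactly the approach the paper intends: the paper states Corollary~\ref{corollary.Cohen-Macaulay} without proof, as an immediate consequence of Theorem~\ref{theorem.inv_algebra} and Lemma~\ref{lemma.cm.quotient}, and you have supplied precisely that argument, including the check that $G_0$ stabilizes $X$ and $Y$ separately (which is indeed straightforward from the $G$-stability of the kernel).

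Your remark about the varying degrees of the $\theta_{i,j}$ is well taken and worth noting: the proof of Lemma~\ref{lemma.cm.quotient} as written in the paper tacitly treats the variables as if they were of a single degree (the use of $p_d(Y)=\sum_{y\in Y}y^d$ is only homogeneous under that assumption), so your patch---splitting each of $X$ and $Y$ into degree classes and taking the product $\prod_d\sym(X_d)\cdot\prod_d\sym(Y_d)$ as the ring of primary invariants---is the right way to make the argument fully rigorous in the graded setting. This is a genuine (if minor) refinement over what the paper makes explicit.
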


\newpage
\bibliographystyle{alpha}
\bibliography{main}

\end{document}